\numberwithin{equation}{section}
\DeclareMathOperator{\Ker}{Ker}
\DeclareMathOperator{\Ran}{Ran}
\DeclareMathOperator{\Dom}{Dom}
\DeclareMathOperator{\spec}{spec}
\DeclareMathOperator{\sign}{sign}
\DeclareMathOperator{\Tan}{Tan}
\DeclareMathOperator{\divgc}{div}
\DeclareMathOperator{\grad}{grad}
\DeclareSymbolFont{SY}{U}{psy}{m}{n}
\DeclareMathSymbol{\emptyset}{\mathord}{SY}{'306}
\DeclarePairedDelimiter{\abs}{|}{|}
\DeclarePairedDelimiter{\norm}{\lVert}{\rVert}
\DeclarePairedDelimiter{\scprod}{\langle}{\rangle}
\newcommand{\dd}{\mathrm d}
\newcommand{\eps}{\varepsilon}
\newcommand{\ess}{\mathrm{ess}}
\newcommand{\NN}{\mathbb{N}}
\newcommand{\RR}{\mathbb{R}}
\newcommand{\CC}{\mathbb{C}}
\newcommand{\EE}{\mathsf{E}}
\newcommand{\cC}{{\mathcal C}}
\newcommand{\cD}{{\mathcal D}}
\newcommand{\cE}{{\mathcal E}}
\newcommand{\cH}{{\mathcal H}}
\newcommand{\cK}{{\mathcal K}}
\newcommand{\fa}{{\mathfrak a}}
\newcommand{\fb}{{\mathfrak b}}
\newcommand{\fv}{{\mathfrak v}}
\newcommand{\fD}{{\mathfrak D}}
\newcommand{\fM}{{\mathfrak M}}
\newcommand{\fN}{{\mathfrak N}}
\theoremstyle{plain}
\newtheorem{theorem}{Theorem}[section]
\newtheorem{proposition}[theorem]{Proposition}
\newtheorem{lemma}[theorem]{Lemma}
\newtheorem{corollary}[theorem]{Corollary}
\theoremstyle{definition}
\newtheorem{hypothesis}[theorem]{Hypothesis}
\theoremstyle{remark}
\newtheorem{remark}[theorem]{Remark}
\title[On a minimax principle in spectral gaps]{On a minimax principle in spectral gaps}
\subjclass[2010]{Primary 49Rxx; Secondary 47A10, 47A75}
\keywords{Minimax values, eigenvalues in gap of the essential spectrum, block diagonalization, Stokes operator}
\date{}
\author[A.\ Seelmann]{Albrecht Seelmann}
\address{A.~Seelmann,
 Technische Univer\-si\-t\"at Dortmund, Fakult\"at f\"ur Mathematik, D-44221 Dortmund, Germany}
\email{albrecht.seelmann@math.tu-dortmund.de}
\begin{document}

\begin{abstract}
  The minimax principle for eigenvalues in gaps of the essential spectrum in the form presented by Griesemer, Lewis, and
  Siedentop in~[Doc.~Math.~\textbf{4} (1999), 275--283] is adapted to cover certain abstract perturbative settings with bounded
  or unbounded perturbations, in particular ones that are off-diagonal with respect to the spectral gap under consideration. This
  in part builds upon and extends the considerations in the author's appendix
  to~[J.~Spectr.~Theory~\textbf{10} (2020), 843--885]. Several monotonicity and continuity properties of eigenvalues in gaps of
  the essential spectrum are deduced, and the Stokes operator is revisited as an example.
\end{abstract}

\maketitle

\section{Introduction and main result}\label{sec:intro}

The standard Courant minimax values $\lambda_k(A)$ of a lower semibounded operator $A$ on a Hilbert space $\cH$ are given by
\begin{equation*}
  \lambda_k(A)
  =
  \inf_{\substack{\fM\subset\Dom(A)\\ \dim\fM=k}} \sup_{\substack{x\in\fM\\ \norm{x}=1}} \scprod{ x , Ax }
  =
  \inf_{\substack{\fM\subset\Dom(|A|^{1/2})\\ \dim\fM=k}} \sup_{\substack{x\in\fM\\ \norm{x}=1}} \fa[x,x]
\end{equation*}
for $k\in\NN$ with $k\le\dim\cH$, see, e.g.,~\cite[Theorem~12.1]{LL01} and also~\cite[Section~12.1 and Exercise~12.4.2]{Schm12}.
Here, $\langle\cdot,\cdot\rangle$ denotes the inner product of $\cH$, and $\fa$ with
$\fa[x,x] = \scprod{ \abs{A}^{1/2}x , \sign(A)\abs{A}^{1/2}x }$ for $x\in\Dom(\abs{A}^{1/2})$ is the form associated with $A$.

The above minimax values have proved to be a powerful description of the eigenvalues below the essential spectrum of $A$; in
fact, they agree with these eigenvalues in nondecreasing order counting multiplicities as long as they exists and else equal the
bottom of the essential spectrum. A standard application in this context is that the eigenvalues below the essential spectrum
exhibit a monotonicity with respect to the operator: for two lower semibounded self-adjoint operators $A$ and $B$ with $A\le B$
in the sense of quadratic forms one has $\lambda_k(A) \le \lambda_k(B)$ for all $k$, see, e.g.,~\cite[Corollary~12.3]{Schm12}.

Matters get, however, much more complicated when eigenvalues in a gap of the essential spectrum are considered. If $A_+$ is the
(lower semibounded) part of $A$ associated with its spectrum in an interval of the form $(\gamma,\infty)$, $\gamma\in\RR$, then
the minimax values for $A_+$ still describe the eigenvalues of $A_+$ below its essential spectrum, and thus the eigenvalues of
$A$ in $(\gamma, \infty)$ below the essential spectrum of $A$ above $\gamma$. However, the subspaces over which the corresponding
infimum is taken are chosen within the spectral subspace for $A$ associated with the interval $(\gamma,\infty)$ and therefore
usually depend on the operator itself rather than just its domain. This makes it difficult to compare minimax values in spectral
gaps of two different operators $A$ and $B$, even if their domains agree.

An adapted minimax principle taking this problem into account was first proposed by Talman~\cite{Tal86} and Datta and
Devaiah~\cite{DD88} in the context of Dirac operators. A corresponding mathematically rigorous result was announced by
Esteban and S\'er\'e in~\cite{ES97} and proved together with Dolbeault in~\cite{DES00a}.
To the best of the authors knowledge, the first abstract theorem in this direction is due to Griesemer and Siedentop~\cite{GS99},
the hypotheses of which have an overlap with~\cite{ES97,DES00a} but do not seem suitable to handle Dirac operators efficiently.
In an attempt to overcome this and as a step towards finding the optimal assumptions, Griesemer, Lewis, and
Siedentop~\cite{GLS99} provided an alternative set of hypotheses.
In a parallel development, Dolbeault, Esteban, and S\'er\'e~\cite{DES00b} obtained an abstract theorem in an operator setting
with yet another set of hypotheses that has an overlap with those of~\cite{GLS99} but allows to deal with more potentials for
Dirac operators. However, the abstract result of~\cite{GLS99} does not seem to be contained in~\cite{DES00b}. The result
in~\cite{DES00b} has later been extended to a form setting by Morozov and M\"uller~\cite{MM15}, and recently by Schimmer,
Solovej, and Tokus~\cite{SST20} to a class of symmetric operators with a distinguished self-adjoint extension.
There has also been some activity regarding variational principles for block operator matrices, see, e.g.,~\cite{LLT02,KLT04} and
triple variational principles, see, e.g.,~\cite{EL02,LS16}. They, however, follow a different approach and are not pursued here.

The aim of the present work is to complement the above works in an abstract framework. To this end, the result by Griesemer,
Lewis, and Siedentop in ~\cite{GLS99} is adapted to a perturbative setting. In the particular case of bounded additive
perturbations, this has already been done by the present author in the appendix to~\cite{NSTTV20} with hypotheses that can, under
reasonable assumptions, be verified explicitly by means of the Davis-Kahan $\sin2\Theta$ theorem from~\cite{DK70} or variants
thereof. The latter has been successfully applied in~\cite{NSTTV20} to study lower bounds on the movement of eigenvalues in gaps
of the essential spectrum and of edges of the essential spectrum. In the current work, the considerations
from~\cite[Appendix~A]{NSTTV20} are extended and supplemented to cover also certain unbounded perturbations, in particular ones
that are off-diagonal with respect to the spectral gap under consideration. The results obtained here seem~\emph{not} suitable to
handle Dirac operators with Coulomb potentials since either the perturbation is assumed to be sufficiently small
(Theorem~\ref{thm:genOpInfinitesimal}) or of an off-diagonal structure (Theorem~\ref{thm:offdiagOp}) or since they assume a
semibounded setting (Theorems~\ref{thm:genSemibounded} and~\ref{thm:offdiagForm}). However, weaker perturbations and other
important situations such as perturbed periodic Schr\"odinger operators seem to be a natural context in which they can be
applied. It should also be mentioned that some of the results and applications discussed here might at least in parts also be
obtained with the approaches from earlier works such as~\cite{DES00b,MM15,SST20,LS16}. This is commented on at various spots
below, see, e.g., Remarks~\ref{rem:offdiagForm},~\ref{rem:boundedPert}\,(2),~\ref{rem:StokesMMSST}, and~\ref{rem:gap}. The
present work focuses on~\cite{GLS99} as a starting point for mainly two reasons: Firstly, the proof of that result is remarkably
elementary and short, while the proofs of~\cite{DES00b, MM15, SST20, LS16} are each a lot longer and much more technical, and,
secondly, the techniques employed in Sections~\ref{sec:graphNorm} and~\ref{sec:blockDiag} below to apply the approach
from~\cite{GLS99} promise to be of independent interest. In any case, to the best of the authors knowledge, neither the main
results presented here nor their applications have been stated explicitly anywhere before. Rare exceptions to the latter are
commented on accordingly.

\subsection*{Main results}
In order to formulate our main results, it is convenient to fix the following notational setup tailored towards spectral gaps to
the right of $0$; other gaps can of course always be reduced to this situation by spectral shift,
cf.~Remark~\ref{rem:spectralShift} below.

\begin{hypothesis}\label{hyp:minimax}
  Let $A$ be a self-adjoint operator on a Hilbert space. Denote the spectral projections for $A$ associated with the intervals
  $(0,\infty)$ and $(-\infty,0]$ by $P_+$ and $P_-$, respectively, that is,
  \begin{equation*}
    P_+ := \EE_A\bigl((0,\infty)\bigr)
    ,\quad
    P_- := I-P_+
    ,
  \end{equation*}
  and let
  \begin{equation*}
    \cD_\pm := \Ran P_\pm \cap \Dom(A)
    ,\quad
    \fD_\pm := \Ran P_\pm \cap \Dom(\abs{A}^{1/2})
    .
  \end{equation*}
  Moreover, let $B$ be another self-adjoint operator on the same Hilbert space with analogously defined spectral projections
  \begin{equation*}
    Q_+ := \EE_B\bigl((0,\infty)\bigr)
    ,\quad
    Q_- := I - Q_+
    ,
  \end{equation*}
  and denote by $\fb$ the form associated with $B$, that is,
  \begin{equation*}
    \fb[x,y] = \scprod{ \abs{B}^{1/2}x , \sign(B)\abs{B}^{1/2}y }
  \end{equation*}
  for $x,y\in\Dom[\fb] = \Dom(\abs{B}^{1/2})$.
\end{hypothesis}

Here, $\EE_A$ and $\EE_B$ stand for the projection-valued spectral measures for the operators $A$ and $B$, respectively, and
$\Ran P_\pm$ denotes the range of $P_\pm$. We have also used the notation $I$ for the identity operator.

Denoting the form associated with $A$ by $\fa$, the minimax values of the positive part $A|_{\Ran P_+}$ of $A$ can clearly be
written as
\begin{equation*}
  \lambda_k(A|_{\Ran P_+})
  =
  \inf_{\substack{\fM_+\subset\cD_+\\ \dim\fM_+=k}} \sup_{\substack{x\in\fM_+ \oplus \cD_-\\ \norm{x}=1}} \scprod{ x , Ax }
  =
  \inf_{\substack{\fM_+\subset\fD_+\\ \dim\fM_+=k}} \sup_{\substack{x\in\fM_+ \oplus \fD_-\\ \norm{x}=1}} \fa[x,x]
\end{equation*}
for $k\in\NN$ with $k \le \dim \Ran P_+$. The point of interest is now to find conditions on $B$ under which the minimax values
for the positive part $B|_{\Ran Q_+}$ of $B$ admit the same representations with $\scprod{x , Ax }$ and $\fa[x , x]$ replaced by
$\scprod{x , Bx}$ and $\fb[x , x]$, respectively, but with the infima taken over the same respective families of subspaces as for
$A$ above. It is natural to consider this in a perturbative framework where $B$ is obtained by an operator or form perturbation
of $A$ and, thus, one has $\Dom(A) = \Dom(B)$ and/or $\Dom(\abs{A}^{1/2}) = \Dom(\abs{B}^{1/2})$.

In the situation of Hypothesis~\ref{hyp:minimax}, a representation for the minimax values of $B|_{\Ran Q_+}$ of the above
mentioned form is guaranteed by~\cite[Theorem~1]{MM15} in the form setting with $\Dom(\abs{A}^{1/2}) = \Dom(\abs{B}^{1/2})$ if
\begin{equation}\label{eq:MM}
  \sup_{x_- \in \fD_-} \fb[ x_- , x_- ]
  \leq
  0
  <
  \inf_{x_+ \in \fD_+\setminus\{0\}} \sup_{x_- \in \fD_-} \frac{\fb[ x_+ + x_- , x_+ + x_- ]}{\norm{x_+ + x_-}^2}
  ,
\end{equation}
or by~\cite[Theorem~1.1]{DES00b} in the operator setting with $\Dom(A) = \Dom(B)$ if the analogous condition with $\fD_\pm$
replaced by $\cD_\pm$ is satisfied; as pointed out in~\cite{SST20}, for the latter additionally the restriction
$P_-B|_{\Ran P_-}$ should be essentially self-adjoint on $\cD_-$. In case of~\eqref{eq:MM}, the right-hand side of~\eqref{eq:MM}
then agrees with $\lambda_1(B|_{\Ran Q_+})$, so that the strict inequality in~\eqref{eq:MM} is also a necessary condition for
such a representation to hold if $B$ has a spectral gap to the right of zero. However, this strict inequality is not always very
convenient to verify or it is sometimes not even entirely clear how to verify it, cf.~Remarks~\ref{rem:offdiagForm}\,(2)
and~\ref{rem:boundedPert}\,(2) below.

Instead of~\eqref{eq:MM},~\cite{GLS99} used the conditions
\begin{equation}\label{eq:GLS}
  \sup_{x_- \in \fD_-} \fb[ x_- , x_- ]
  \leq
  0
  \quad\text{ and }\quad
  \norm{ (\abs{A}+I)^{1/2} P_+Q_- (\abs{A}+I)^{-1/2} }
  <
  1
  ,
\end{equation}
cf.~Remark~\ref{rem:neg} below, which the authors were able to handle in case of Dirac operators but where especially the second
condition seems to be hard to deal with in a general abstract setting. However, although~\eqref{eq:MM} can treat more
Coulomb-like potentials than~\eqref{eq:GLS} in case of the Dirac operator,~\eqref{eq:GLS} does not seem to imply~\eqref{eq:MM}
directly.

In the main results below the aim is to discuss situations where the second condition in~\eqref{eq:GLS} can be replaced by
$\norm{ P_+Q_- } < 1$, $\norm{ P_+ - Q_+ } < 1$, or by a certain explicit structural assumption on how $B$ is related to $A$.
Here, especially the first two conditions seem to be natural since they relate the subspaces $\Ran P_+$ and $\Ran Q_+$.
Four results in this direction are presented here, each addressing a different situation, which are not contained in the
previously known results in the sense that their hypotheses do not seem to imply~\eqref{eq:MM}, its operator analogue,
or~\eqref{eq:GLS} directly. We first treat the case of operator perturbations and start with the direct extension
of~\cite[Theorem~A.2]{NSTTV20} to infinitesimal perturbations. Recall that an operator $V$ with $\Dom(V) \supset \Dom(A)$ is
called $A$-bounded with $A$-bound $b_* \ge 0$ if for all $b > b_*$ there is some $a \ge 0$ with
\begin{equation*}
  \norm{ Vx }
  \le
  a\norm{x} + b\norm{Ax}
  \quad\text{ for all }\
  x \in \Dom(A)
\end{equation*}
and if there is no such $a$ for $0 < b < b_*$. If $b_* = 0$, then $V$ is called infinitesimal with respect to $A$.

\begin{theorem}\label{thm:genOpInfinitesimal}
  Assume Hypothesis~\ref{hyp:minimax}. Suppose, in addition, that $B$ is of the form $B = A + V$, $\Dom(B) = \Dom(A)$, with some
  symmetric operator $V$ that is infinitesimal with respect to $A$. Furthermore, suppose that we have $\norm{P_+Q_-} < 1$ and
  that
  \begin{equation*}
    \scprod{ x , Bx } \le 0 \quad\text{ for all }\ x \in \cD_-.
  \end{equation*}
  Then,
  \begin{equation*}
    \lambda_k(B|_{\Ran Q_+})
    =
    \inf_{\substack{\fM_+\subset\cD_+\\ \dim\fM_+=k}} \sup_{\substack{x\in\fM_+ \oplus \cD_-\\ \norm{x}=1}} \scprod{ x , Bx }
    =
    \inf_{\substack{\fM_+\subset\fD_+\\ \dim\fM_+=k}} \sup_{\substack{x\in\fM_+ \oplus \fD_-\\ \norm{x}=1}} \fb[x,x]
  \end{equation*}
  for all $k \in \NN$ with $k \le \dim\Ran P_+$.
\end{theorem}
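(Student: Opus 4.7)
My plan is to invoke the abstract minimax principle of Griesemer, Lewis, and Siedentop~\cite{GLS99}, applied in the spirit of the author's earlier variant from~\cite[Appendix~A]{NSTTV18}. As a preliminary step, since $V$ is symmetric and infinitesimally $A$-bounded, the Kato--Rellich theorem gives that $B=A+V$ is self-adjoint with $\Dom(B)=\Dom(A)$. Moreover, an operator-infinitesimal perturbation is also form-infinitesimal, so the KLMN theorem yields $\Dom(\abs{B}^{1/2})=\Dom(\abs{A}^{1/2})$. Consequently $\cD_\pm\subset\Dom(B)$ and $\fD_\pm\subset\Dom(\fb)$, so every quantity appearing on the right-hand side of the conclusion is well defined.

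Next I would verify the hypotheses of~\cite{GLS99} with $\cD_+,\cD_-$ (respectively $\fD_+,\fD_-$) playing the role of the ``positive'' and ``negative'' subspaces for $B$. Two requirements are essentially free: the decomposition $\Dom(B)=\cD_+\oplus\cD_-$ is inherited from the $A$-spectral decomposition since $P_\pm$ commute with $A$ and $\Dom(A)=\Dom(B)$, and the sign condition $\scprod{x,Bx}\le 0$ on $\cD_-$ is assumed. The remaining GLS condition---that $\cD_+\oplus\cD_-$ is sufficiently close to the spectral decomposition $\Ran Q_+\oplus\Ran Q_-$ of $B$---is where the hypothesis $\norm{P_+Q_-}<1$ enters. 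Specifically, this norm bound forces $Q_+|_{\Ran P_+}\colon\Ran P_+\to\Ran Q_+$ to be a bounded bijection: injectivity follows from $\norm{x}=\norm{Q_-P_+x}\le\norm{P_+Q_-}\norm{x}$ for $x\in\Ran P_+$ with $Q_+x=0$, and surjectivity with a bounded inverse comes from the Neumann series for $P_+Q_+P_+|_{\Ran P_+}=I_{\Ran P_+}-P_+Q_-P_+|_{\Ran P_+}$, whose perturbation has norm at most $\norm{P_+Q_-}^2<1$.

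With the GLS hypotheses in place, the operator-level identity (first equality in the conclusion) follows directly. To upgrade to the form-level identity, I would use that $\cD_\pm$ is a form core for $\fD_\pm$ by spectral calculus for $A$, and that $\scprod{x,Bx}=\fb[x,x]$ on $\cD_+\oplus\cD_-$. The infinitesimal form bound makes $\fb$ continuous in the $\abs{A}^{1/2}$-graph norm, so any $k$-dimensional $\fM_+\subset\fD_+$ can be approximated by $k$-dimensional subspaces of $\cD_+$ with negligibly larger supremum of $\fb[x,x]$ over the corresponding test spaces $\fM_+\oplus\fD_-$; the reverse inequality is obvious from $\cD_+\subset\fD_+$.

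The principal obstacle I anticipate is the extension of the sign condition from $\scprod{x,Bx}\le 0$ on $\cD_-$ to $\fb[x,x]\le 0$ on the larger $\fD_-$, which is what allows the form side of the GLS principle to go through at all. This requires exploiting the infinitesimal form bound to pass to the closure in the $\abs{A}^{1/2}$-graph norm, together with density of $\cD_-$ in $\fD_-$ in that norm. A secondary concern is that the precise formulation of~\cite{GLS99} may need minor cosmetic modifications to accommodate the off-diagonal flavor of the hypothesis $\norm{P_+Q_-}<1$; this is exactly the template used in~\cite[Appendix~A]{NSTTV18}, so the argument should mirror and extend that earlier work rather than reinvent it.
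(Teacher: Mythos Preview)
Your outline correctly identifies the abstract GLS principle (the paper's Proposition~\ref{prop:GLS}) as the engine, and your treatment of the sign condition on $\fD_-$ by approximation matches the paper's Lemma~\ref{lem:GLS}\,(b). However, there is a genuine gap at the point you treat as routine: the GLS hypothesis you actually need is $\Ran(P_+Q_+|_{\cD_+}) \supset \cD_+$, i.e., surjectivity of $P_+Q_+$ \emph{on the domain level} $\cD_+ = \Ran P_+ \cap \Dom(A)$, not merely on the full subspace $\Ran P_+$. Your Neumann series argument shows only that $T := P_+Q_+|_{\Ran P_+}$ is a bijection of $\Ran P_+$; it says nothing about whether $T^{-1}$ preserves $\cD_+$. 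This is not a cosmetic point: the Neumann series $\sum S^k$ with $S = P_+Q_-|_{\Ran P_+}$ converges in operator norm, but there is no automatic reason its limit should map $\Dom(A|_{\Ran P_+})$ into itself when $V$ is unbounded.

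The paper closes this gap with the \emph{graph norm approach} of Section~\ref{sec:graphNorm}: one computes the commutator $\Lambda S - S\Lambda = K$ with $\Lambda = A|_{\Ran P_+}$ and $K = (P_+Q_-V - P_+VQ_-)|_{\Ran P_+}$, verifies that $K$ is $\Lambda$-bounded with $\Lambda$-bound~$0$ (this is where infinitesimality of $V$ is used), and then invokes Lemma~\ref{lem:specRad} to bound the spectral radius of $S$ in the graph norm by $\norm{S} < 1$. This forces the Neumann series to converge in the graph norm, hence $T^{-1}$ maps $\cD_+$ into $\cD_+$. Without this step---or an equivalent substitute---the GLS principle cannot be invoked and your argument does not close. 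Once $\Ran(P_+Q_+|_{\cD_+}) \supset \cD_+$ is established, the form-level statement follows via Lemma~\ref{lem:GLS}\,(c) (a Heinz-inequality argument), which is cleaner than the direct approximation you sketch but leads to the same conclusion.
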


It is worth to note that every operator of the form $B = A + V$ as in Theorem~\ref{thm:genOpInfinitesimal} is automatically
self-adjoint on $\Dom(B) = \Dom(A)$ by the well-known Kato-Rellich theorem. Two more remarks regarding
Theorem~\ref{thm:genOpInfinitesimal} are in order:
(1)
also certain perturbations $V$ that are not infinitesimal with respect to $A$ can be considered here, but at the cost of a
stronger assumption on $\norm{P_+Q_-}$, see Remark~\ref{rem:relBoundK} below;
(2)
the condition $\norm{P_+Q_-} < 1$ is satisfied if the stronger inequality $\norm{P_+-Q_+} < 1$ holds. In the latter case, the
subspaces $\Ran P_+$ and $\Ran Q_+$ automatically have the same dimension, that is, $\dim \Ran P_+ = \dim \Ran Q_+$, see
Remark~\ref{rem:PQbij}\,(a) below.

The stronger condition~$\norm{P_+-Q_+} < 1$ just mentioned in fact also opens the way to employ a different approach than the one
used to prove Theorem~\ref{thm:genOpInfinitesimal}. This alternative approach has previously been used in the context of block
diagonalization of operators and forms, see Section~\ref{sec:blockDiag} below, and is particularly attractive if the unperturbed
operator $A$ is semibounded.

\begin{theorem}\label{thm:genSemibounded}
  Assume Hypothesis~\ref{hyp:minimax}. Suppose, in addition, that $A$ is semibounded and that $\norm{P_+ - Q_+} < 1$.

  If $\Dom(\abs{A}^{1/2}) = \Dom(\abs{B}^{1/2})$ and $\fb[ x , x ] \le 0$ for all $x \in \fD_-$, then
  \begin{equation}\label{eq:genSemibounded:form}
    \lambda_k(B|_{\Ran Q_+})
    =
    \inf_{\substack{\fM_+\subset\fD_+\\ \dim\fM_+=k}} \sup_{\substack{x\in\fM_+ \oplus \fD_-\\ \norm{x}=1}} \fb[x,x]
  \end{equation}
  for all $k \le \dim\Ran P_+ = \dim\Ran Q_+$. If even $\Dom(A) = \Dom(B)$ and $\scprod{ x , Bx } \le 0$ for all $x \in \cD_-$,
  then also
  \begin{equation}\label{eq:genSemibounded:op}
    \lambda_k(B|_{\Ran Q_+})
    =
    \inf_{\substack{\fM_+\subset\cD_+\\ \dim\fM_+=k}} \sup_{\substack{x\in\fM_+ \oplus \cD_-\\ \norm{x}=1}} \scprod{ x , Bx }
  \end{equation}
  for all $k \le \dim\Ran P_+ = \dim\Ran Q_+$.
\end{theorem}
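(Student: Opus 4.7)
The strategy is to combine the block-diagonalization results of Section~\ref{sec:blockDiag} with a direct dimension-counting argument. I would first establish the form identity~\eqref{eq:genSemibounded:form}; the operator identity~\eqref{eq:genSemibounded:op} would then follow by parallel reasoning, using that under $\Dom(A)=\Dom(B)$ the spaces $\cD_\pm$ serve as form cores for the corresponding restrictions of $B$, so that the suprema in~\eqref{eq:genSemibounded:form} and~\eqref{eq:genSemibounded:op} coincide.

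For the inequality $\lambda_k(B|_{\Ran Q_+})\le\sup_{x\in\fM_+\oplus\fD_-,\,\norm{x}=1}\fb[x,x]$ (which after passing to the infimum over $\fM_+$ gives one direction of~\eqref{eq:genSemibounded:form}), I would pick $\fM_+\subset\fD_+$ of dimension $k$ and consider the subspace $\fN:=(\fM_+\oplus\fD_-)\cap\Ran Q_+$. Using $\norm{P_+-Q_+}<1$ together with the semiboundedness of $A$---which entails $\fD_-=\Ran P_-$ and makes $Q_-|_{\fD_-}$ bijective onto $\Ran Q_-$---a standard dimension count yields $\dim\fN=k$ and $\fN\subset\Dom(\abs{B}^{1/2})$. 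The Courant minimax principle applied to the lower semibounded operator $B|_{\Ran Q_+}$ on $\fN$ then gives
\begin{equation*}
  \lambda_k(B|_{\Ran Q_+})
  \le \sup_{\substack{y\in\fN\\ \norm{y}=1}} \fb[y,y]
  \le \sup_{\substack{x\in\fM_+\oplus\fD_-\\ \norm{x}=1}} \fb[x,x],
\end{equation*}
and taking the infimum over $\fM_+$ concludes this direction.

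For the opposite inequality I would invoke block-diagonalization from Section~\ref{sec:blockDiag}: under $\norm{P_+-Q_+}<1$ and $\Dom(\abs{A}^{1/2})=\Dom(\abs{B}^{1/2})$, there is a unitary $U$ with $U^*Q_+U=P_+$ that preserves the common form domain, so that $\tilde\fb[x,y]:=\fb[Ux,Uy]$ is block-diagonal with respect to $\fD_+\oplus\fD_-$, nonnegative on $\fD_+$, and nonpositive on $\fD_-$. Applying the standard minimax to the lower semibounded block-diagonal operator $\tilde B=U^*BU$ on $\Ran P_+$ (which is unitarily equivalent to $B|_{\Ran Q_+}$), and using the block-diagonality to pass from $\fM_+$ to $\fM_+\oplus\fD_-$ in the supremum without change, one first obtains the analogue of~\eqref{eq:genSemibounded:form} with $\tilde\fb$ in place of $\fb$. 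The final transfer from $\tilde\fb$ back to $\fb$ is then accomplished by choosing, for a nearly optimal $\fN\subset\Ran Q_+\cap\Dom(\abs{B}^{1/2})$, the subspace $\fM_+:=U^*\fN\subset\fD_+$ and decomposing a generic $x\in\fM_+\oplus\fD_-$ along both $P_\pm$ and $Q_\pm$, exploiting the sign conditions $\fb\le 0$ on $\fD_-$ and on $\Ran Q_-$ to absorb the emerging cross-terms.

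The main obstacle lies in this final transfer step: the cross-terms $\fb[u,v]$ with $u\in\fM_+$ and $v\in\fD_-$ do not vanish, and their control requires the simultaneous use of both sign assumptions together with the precise structure of the direct-rotation unitary $U$. A second, more technical input that pervades the argument is the preservation of the common form (resp.\ operator) domain by $U$, which must be supplied by the form-level (resp.\ operator-level) block-diagonalization machinery of Section~\ref{sec:blockDiag}.
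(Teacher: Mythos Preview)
Your outline diverges substantially from the paper's argument and leaves the decisive step unresolved.

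The paper's proof is short: semiboundedness of $A$ forces either $\fD_-=\Ran P_-$ (lower semibounded) or $\fD_+=\Ran P_+$ (upper semibounded), so that condition~(ii) or~(i) in Lemma~\ref{lem:PQXY} holds trivially for $\cC=\Dom(\abs{A}^{1/2})$. That lemma then yields that $I_{\Ran P_+}+X^*X$ maps $\fD_+$ into itself, which by~\eqref{eq:PQY} is exactly $\Ran(P_+Q_+|_{\fD_+})\supset\fD_+$. Proposition~\ref{prop:GLS}\,(a) now gives~\eqref{eq:genSemibounded:form} directly; the operator version follows by the same reasoning with $\cD_\pm$ in place of $\fD_\pm$. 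No unitary, no transfer step, no cross-terms.

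Your first direction (the inequality $\lambda_k(B|_{\Ran Q_+})\le\inf\sup$) is fine and is essentially one half of Step~1 in~\cite{GLS99}; note, though, that you only treat the lower semibounded case, whereas the theorem also allows $A$ upper semibounded, where $\fD_-\ne\Ran P_-$ and the dimension count must be run through $\fD_+=\Ran P_+$ instead.

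The real problem is your second direction. First, the assertion that the direct rotation $U$ preserves $\Dom(\abs{A}^{1/2})$ is \emph{not} automatic from $\norm{P_+-Q_+}<1$ and the domain equality; it is equivalent (via the relation between $U$ and the operator $Y$ in~\eqref{eq:defY}) to condition~(iii) in Lemma~\ref{lem:PQXY}, which the paper obtains precisely from semiboundedness. So the ingredient you invoke as input is in fact the content of the proof. Second, even granting that $U$ preserves the form domain, the ``transfer'' from $\tilde\fb$ back to $\fb$ does not work as you describe: for $x=u+v\in\fM_+\oplus\fD_-$ the decomposition along $Q_\pm$ gives $\fb[x,x]\le\fb[Q_+x,Q_+x]$, but $Q_+x$ has no reason to lie in your chosen $\fN$, and the sign hypotheses on $\fD_-$ and $\Ran Q_-$ alone do not control the cross-terms $\fb[u,v]$. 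The correct way to close the argument, once you know $U$ (equivalently $I_{\Ran P_+}+X^*X$) preserves the relevant domain, is to observe that this is precisely $\Ran(P_+Q_+|_{\fD_+})\supset\fD_+$ and to invoke Proposition~\ref{prop:GLS}; the detour through $\tilde\fb$ is a red herring.
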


It should be emphasized that the conditions $\Dom(A) = \Dom(B)$ and $\scprod{ x , Bx } \le 0$ for all $x \in \cD_-$ in
Theorem~\ref{thm:genSemibounded} indeed imply that one has also $\Dom(\abs{A}^{1/2}) = \Dom(\abs{B}^{1/2})$ and
$\fb[ x , x ] \le 0$ for all $x \in \fD_-$, see Lemma~\ref{lem:GLS} below. Note also that in contrast to
Theorem~\ref{thm:genOpInfinitesimal}, Theorem~\ref{thm:genSemibounded} makes no assumptions on how the operator $B$ is related to
$A$. The latter will, however, be relevant when the hypotheses of Theorem~\ref{thm:genSemibounded} are to be verified in concrete
situations.

The condition $\scprod{ x , Bx } \le 0$ for all $x \in \cD_-$ plays an important role in both
Theorems~\ref{thm:genOpInfinitesimal} and~\ref{thm:genSemibounded}. In the case where $B = A + V$ with some $A$-bounded symmetric
operator $V$, this condition is automatically satisfied if $\scprod{ x , Vx } \le 0$ for all $x \in \cD_-$ since
$\scprod{ x , Ax } \le 0$ holds for all $x \in \cD_-$ by definition. The latter is certainly the case for nonpositive $V$.
Another instance of perturbations satisfying $\scprod{ x , Vx } \le 0$ for all $x \in \cD_-$ are so-called~\emph{off-diagonal}
perturbations with respect to the decomposition $\Ran P_+ \oplus \Ran P_-$, in which case also the condition $\norm{P_+-Q_+} < 1$
can be verified efficiently. In comparison with Theorem~\ref{thm:genOpInfinitesimal}, we may even relax the assumption on the
$A$-bound of $V$ here.

\begin{theorem}\label{thm:offdiagOp}
  Assume Hypothesis~\ref{hyp:minimax}. Suppose, in addition, that $B$ has the form $B = A + V$, $\Dom(B) = \Dom(A)$, with some
  symmetric $A$-bounded operator $V$ with $A$-bound smaller than $1$ and which is off-diagonal on $\Dom(A)$ with respect to the
  decomposition $\Ran P_+ \oplus \Ran P_-$, that is,
  \begin{equation*}
    P_+VP_+ x = 0 = P_-VP_- x \quad\text{ for all }\ x \in \Dom(A).
  \end{equation*}
  Then, one has $\dim\Ran P_+ = \dim\Ran Q_+$ and
  \begin{equation*}
    \lambda_k(B|_{\Ran Q_+})
    =
    \inf_{\substack{\fM_+\subset\cD_+\\ \dim\fM_+=k}} \sup_{\substack{x\in\fM_+ \oplus \cD_-\\ \norm{x}=1}} \scprod{ x , Bx }
    =
    \inf_{\substack{\fM_+\subset\fD_+\\ \dim\fM_+=k}} \sup_{\substack{x\in\fM_+ \oplus \fD_-\\ \norm{x}=1}} \fb[x,x]
  \end{equation*}
  for all $k \in \NN$ with $k \le \dim\Ran Q_+$.
\end{theorem}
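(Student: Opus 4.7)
The plan is to deduce the theorem from the extension of Theorem~\ref{thm:genOpInfinitesimal} discussed in Remark~\ref{rem:relBoundK}, which permits $A$-bounded---rather than merely infinitesimal---perturbations $V$ at the cost of a quantitatively stronger condition on $\norm{P_+Q_-}$. Two ingredients are required: (i)~the sign condition $\scprod{x, Bx} \le 0$ for $x \in \cD_-$, and (ii)~a bound on $\norm{P_+-Q_+}$ strong enough both to force $\dim\Ran P_+ = \dim\Ran Q_+$ and to supply the required control on $\norm{P_+Q_-}$. Ingredient~(i) is immediate: for $x \in \cD_- = \Ran P_- \cap \Dom(A)$ one has $P_+ x = 0$, and $Vy = P_+VP_- y + P_-VP_+ y$ for all $y \in \Dom(A)$ (using that $P_\pm$ preserve $\Dom(A)$) gives
\begin{equation*}
  \scprod{x, Vx} = \scprod{P_+ x, V P_- x} + \scprod{P_- x, V P_+ x} = 0,
\end{equation*}
so $\scprod{x, Bx} = \scprod{x, Ax} \le 0$ by the spectral theorem for $A$.

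Ingredient~(ii) is the technical core. I expect to appeal to the block-diagonalization machinery of Section~\ref{sec:blockDiag}: for off-diagonal $V$ with $A$-bound strictly less than~$1$, the Riccati equation associated with $B$ is solvable and yields a bounded operator $X\colon \Ran P_+ \to \Ran P_-$ with $\norm{X} < 1$ whose graph coincides with $\Ran Q_+$. A standard block-matrix computation then converts this into $\norm{P_+-Q_+} < 1$, from which the dimension equality $\dim\Ran P_+ = \dim\Ran Q_+$ and the required bound on $\norm{P_+Q_-}$ follow immediately (cf.~Remark~\ref{rem:PQbij}\,(a)). The main obstacle is extracting a bound on $\norm{X}$ that matches the quantitative hypothesis on $\norm{P_+Q_-}$ imposed by Remark~\ref{rem:relBoundK}; several variants of the relevant off-diagonal perturbation bound exist in the literature, and selecting one that is compatible with that remark will require some care.

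With (i) and~(ii) in hand, the extended form of Theorem~\ref{thm:genOpInfinitesimal} delivers the operator-form minimax identity. For the quadratic-form identity, the fact that the $A$-bound of $V$ is strictly less than~$1$ forces $V$ to be form-bounded (with respect to $\abs{A}$) with form-bound strictly less than~$1$, so $\Dom(\abs{A}^{1/2}) = \Dom(\abs{B}^{1/2})$; the form-analogue of the computation in~(i), applied on $\fD_-$, then verifies $\fb[x,x] \le 0$ there, and the corresponding form-version of the extended minimax principle concludes the proof.
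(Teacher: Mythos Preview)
Your handling of ingredient~(i) is correct and matches the paper. The gap is in~(ii), and it is structural rather than a matter of selecting the right bound from the literature.

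The route through Remark~\ref{rem:relBoundK} cannot cover the full statement of Theorem~\ref{thm:offdiagOp}. That remark requires
\[
  \norm{P_+Q_-} < \frac{1-2b_*-b_*^2}{1-b_*^2},
\]
and the right-hand side is nonpositive once $b_* \ge \sqrt{2}-1$. Since the theorem allows any $A$-bound $b_* < 1$, no control on $\norm{P_+Q_-}$, however sharp, can rescue the argument in the range $b_* \in [\sqrt{2}-1,1)$. Even below that threshold the generic off-diagonal bound is only $\norm{P_+-Q_+} \le \sqrt{2}/2$ (equivalently $\norm{X} \le 1$, not $\norm{X}<1$; see Proposition~\ref{prop:MSS16} and~\eqref{eq:normPQX}), and this already fails the inequality above once $b_*$ exceeds roughly $0.14$. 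The obstacle you anticipate is therefore not one of care in choosing a variant of the bound; it is an actual obstruction.

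The paper sidesteps the graph-norm mechanism entirely here. Proposition~\ref{prop:MSS16} delivers not just the norm estimate but the operator identity $(I-Y)(A+V)(I-Y)^{-1} = A - YV$. Since $A+V$, $A$, and $A-YV$ all have domain $\Dom(A)$, this forces $I-Y$ to map $\Dom(A)$ into itself. Lemma~\ref{lem:PQXY} then converts this into the statement that $I_{\Ran P_+}+X^*X$ preserves $\cD_+$, which by~\eqref{eq:PQY} is precisely the condition $\Ran(P_+Q_+|_{\cD_+}) \supset \cD_+$ needed in Proposition~\ref{prop:GLS}\,(b). No quantitative smallness of $\norm{P_+Q_-}$ beyond $\norm{P_+-Q_+}<1$ enters. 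The form identity is then obtained from the operator one via Lemma~\ref{lem:GLS}, not by a separate form-level argument.
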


It is again worth to note that every operator of the form $B = A + V$ as in Theorem~\ref{thm:offdiagOp} is automatically
self-adjoint on $\Dom(B) = \Dom(A)$ by the Kato-Rellich theorem. Moreover, although off-diagonal perturbations may seem a bit
restrictive, they appear quite naturally when a general, not necessarily off-diagonal, perturbation is decomposed into its
diagonal and off-diagonal parts. How Theorem~\ref{thm:offdiagOp} may then be applied is demonstrated in
Proposition~\ref{prop:redOffDiag} below and the considerations thereafter.

The method of proof for Theorem~\ref{thm:offdiagOp} can to some extend be carried over to off-diagonal form perturbations, at
least in the semibounded setting. The latter restriction is commented on in Section~\ref{sec:blockDiag} below.

\begin{theorem}\label{thm:offdiagForm}
  Assume Hypothesis~\ref{hyp:minimax}. Suppose, in addition, that $B$ is semibounded and that its form $\fb$ is given by
  $\fb = \fa + \fv$, $\Dom[\fb] = \Dom[\fa]$, where $\fa$ is the form associated with $A$ and $\fv$ is a symmetric sesquilinear
  form satisfying
  \begin{equation*}
    \fv[ P_+x , P_+y ]
    =
    0
    =
    \fv[ P_-x , P_-y ]
    \quad\text{ for all }\
    x,y \in \Dom[\fa] \subset \Dom[\fv]
  \end{equation*}
  and
  \begin{equation}\label{eq:formRelBound}
    \abs{ \fv[ x , x ] }
    \le
    a\norm{x}^2 + b\abs{\fa[ x , x ]}
    \quad\text{ for all }\
    x \in \Dom(\abs{A}^{1/2}) = \Dom[\fa]
  \end{equation}
  with some constants $a,b \ge 0$.

  Then, one has $\dim\Ran P_+ = \dim\Ran Q_+$ and
  \begin{equation*}
    \lambda_k(B|_{\Ran Q_+})
    =
    \inf_{\substack{\fM_+\subset\fD_+\\ \dim\fM_+=k}} \sup_{\substack{x\in\fM_+ \oplus \fD_-\\ \norm{x}=1}} \fb[x,x]
  \end{equation*}
  for all $k \in \NN$ with $k \le \dim\Ran Q_+$.
\end{theorem}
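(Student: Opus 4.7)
The plan is to reduce the claim to the form identity~\eqref{eq:genSemibounded:form} of Theorem~\ref{thm:genSemibounded} by verifying its three relevant ingredients: (i)~$\Dom(\abs{A}^{1/2}) = \Dom(\abs{B}^{1/2})$, (ii)~$\norm{P_+ - Q_+} < 1$, and (iii)~$\fb[x,x] \le 0$ for all $x \in \fD_-$. The sign condition~(iii) is immediate from the hypotheses: for $x \in \fD_- = \Ran P_- \cap \Dom(\abs{A}^{1/2})$ one has $\fa[x,x] \le 0$ by definition of $P_-$, while the off-diagonality of $\fv$ forces $\fv[x,x] = \fv[P_-x,P_-x] = 0$, so that $\fb[x,x] = \fa[x,x] + \fv[x,x] \le 0$. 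The dimensional equality $\dim \Ran P_+ = \dim \Ran Q_+$ then follows from~(ii) via Remark~\ref{rem:PQbij}\,(a), which already yields the first conclusion of the theorem.

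The substantive content therefore lies in (i) and (ii), and here I would invoke the off-diagonal form-perturbation theory developed in Section~\ref{sec:blockDiag}. The combination of off-diagonality of $\fv$, the relative form bound~\eqref{eq:formRelBound}, and semiboundedness of $B$ is exactly the regime in which form-analogues of the Davis--Kahan $\sin2\Theta$ theorem produce both the equality of form domains and a rotation estimate bounding $\norm{P_+ - Q_+}$ strictly below $1$. It should be emphasized that off-diagonality is essential here: under~\eqref{eq:formRelBound} alone, without smallness of $b$, neither conclusion would follow. With (i)--(iii) secured, Theorem~\ref{thm:genSemibounded} supplies~\eqref{eq:genSemibounded:form} directly, modulo the fact that it is phrased under semiboundedness of $A$ rather than $B$. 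I would bridge this either by rerunning the proof of Theorem~\ref{thm:genSemibounded} with the roles of $A$ and $B$ interchanged at each step where semiboundedness is used---legitimate once~(i) is in place---or by substituting for $A$ an auxiliary semibounded self-adjoint operator $\widetilde A$ with the same spectral projectors $P_\pm$ and satisfying $\fD_\pm^{\widetilde A} = \fD_\pm^A$, since the right-hand side of~\eqref{eq:genSemibounded:form} depends on $A$ only through the subspaces $\fD_\pm$.

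The principal obstacle is clearly step~(i)--(ii): stability of the form domain together with the rotation bound $\norm{P_+ - Q_+} < 1$ for an off-diagonal form perturbation \emph{without} a smallness assumption on the form-bound constant $b$. This is precisely the situation addressed by the $\sin 2\Theta$-type machinery to be developed in Section~\ref{sec:blockDiag}, so the heart of the proof will consist of locating the appropriate statements there and feeding in the present hypotheses; once (i) and (ii) are established, the remainder of the argument is the routine reduction sketched above.
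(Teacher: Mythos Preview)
Your overall strategy---verify the three hypotheses of Theorem~\ref{thm:genSemibounded} and invoke it---matches the paper's route (the paper appeals directly to Proposition~\ref{prop:GLS}\,(a) via Lemma~\ref{lem:PQXY}, but these are precisely the ingredients of the proof of Theorem~\ref{thm:genSemibounded}). Two points need correction.

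First, item~(i) needs no perturbation-theoretic input: since $B$ is semibounded self-adjoint, its associated form has domain $\Dom(\abs{B}^{1/2})$, and the hypothesis $\fb = \fa + \fv$ on $\Dom[\fa]$ then reads $\Dom(\abs{B}^{1/2}) = \Dom[\fb] = \Dom[\fa] = \Dom(\abs{A}^{1/2})$ directly. The external machinery from~\cite{GKMSV17} is used only for~(ii).

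Second, and more substantively, your detours around the semiboundedness of $A$ are unnecessary and, as written, do not work. The paper simply observes that $A$ \emph{is} semibounded: off-diagonality of $\fv$ gives $\fb[x,x] = \fa[x,x]$ on $\fD_+$ as well as on $\fD_-$, and since $\fa[x,x] = \fa[P_+x,P_+x] + \fa[P_-x,P_-x]$ for every $x \in \Dom[\fa]$, semiboundedness of $\fb$ passes to $\fa$. Theorem~\ref{thm:genSemibounded} then applies verbatim. By contrast, your first workaround fails because in the proof of Theorem~\ref{thm:genSemibounded} semiboundedness of $A$ is used to obtain $\fD_- = \Ran P_-$ (or $\fD_+ = \Ran P_+$), which trivializes one item of Lemma~\ref{lem:PQXY} for $\cC = \fD_+ \oplus \fD_-$; the analogous consequence of semiboundedness of $B$ is a statement about $Q_\pm$, not $P_\pm$, and does not feed into that lemma in the same way. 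Your second workaround is circular: any semibounded $\widetilde A$ sharing the projections $P_\pm$ must have $\fD_-^{\widetilde A} = \Ran P_-$ or $\fD_+^{\widetilde A} = \Ran P_+$, so requiring $\fD_\pm^{\widetilde A} = \fD_\pm$ already forces $A$ to be semibounded.
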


The semiboundedness of $B$ in Theorem~\ref{thm:offdiagForm} forces $A$ to be semibounded as well, see the proof of
Theorem~\ref{thm:offdiagForm} below. In this regard, Theorem~\ref{thm:offdiagForm} can be interpreted as a particular case of
the first part of Theorem~\ref{thm:genSemibounded} with $\Dom(\abs{A}^{1/2}) = \Dom(\abs{B}^{1/2})$, in which the remaining
hypotheses are automatically satisfied due to the structure of the perturbation.

\begin{remark}\label{rem:offdiagForm}
  (1)
  If $B$ in Theorem~\ref{thm:offdiagForm} is lower semibounded, then the operator $(\abs{B} + I)^{1/2}Q_-$ is everywhere defined
  and bounded, and so is the operator $(\abs{B} + I)^{1/2}Q_-P_+$. Taking into account that $\fb[ x , x ] = \fa[ x , x ] \le 0$
  for $x \in \fD_-$ and $\fb[ x , x ] = \fa[ x , x ] > 0$ for $x \in \fD_+$, Theorem~\ref{thm:offdiagForm} therefore reproduces
  in this situation a particular case of the earlier result~\cite[Theorem~3]{GS99}.

  (2)
  If in Theorems~\ref{thm:offdiagOp} or~\ref{thm:offdiagForm} the~\emph{unperturbed} operator $A$ has a spectral gap to the right
  of $0$, then considerations as in part~(1) show that~\eqref{eq:MM} or the corresponding analogue in the operator framework is
  satisfied; cf.~also Corollaries~\ref{cor:lowerboundOffOp} and~\ref{cor:offForm}\,(a) below. In this regard,
  Theorems~\ref{thm:offdiagOp} and~\ref{thm:offdiagForm} then can also be deduced from~\cite{DES00b} and~\cite{MM15},
  respectively. However, if $A$ does not have such a gap, it is a priori not clear how to derive the two theorems
  from~\cite{DES00b,MM15}.
\end{remark}

The rest of this note is organized as follows. In Section~\ref{sec:applications} we discuss applications of the main theorems and
revisit the Stokes operator as an example in the framework of Theorem~\ref{thm:offdiagForm}. It is also explained there how the
framework for off-diagonal perturbations in Theorem~\ref{thm:offdiagOp} can be applied to general, not necessarily off-diagonal,
perturbations by decomposing the perturbation into its diagonal and off-diagonal parts.
Section~\ref{sec:abstrMinimax} is devoted to an abstract minimax principle based on~\cite{GLS99}.
Two approaches are then used to verify the hypotheses of this abstract minimax principle, the~\emph{graph norm approach} and
the~\emph{block diagonalization approach}, respectively, which are discussed separately in Sections~\ref{sec:graphNorm}
and~\ref{sec:blockDiag} below.
Theorem~\ref{thm:genOpInfinitesimal} is proved in Section~\ref{sec:graphNorm}, which is based on the author's appendix
to~\cite{NSTTV20} and extends the corresponding considerations to certain unbounded perturbations.
Theorems~\ref{thm:genSemibounded}--\ref{thm:offdiagForm} are proved in
Section~\ref{sec:blockDiag}, which builds upon recent developments on block diagonalization of operators and forms
from~\cite{MSS16} and~\cite{GKMSV17}, respectively.
Finally, Appendix~\ref{sec:GLS} reproduces the proof from~\cite{GLS99} for the abstract minimax principle discussed in
Section~\ref{sec:abstrMinimax}, and Appendix~\ref{sec:heinz} provides some consequences of the well-known Heinz inequality that
are used at various spots in this work and are probably folklore.

\section{Applications and examples}\label{sec:applications}

In this section, we use the main results from Section~\ref{sec:intro} to prove monotonicity and continuity properties of minimax
values in gaps of the essential spectrum in various situations and also revisit the well-known Stokes operator in the framework
of Theorem~\ref{thm:offdiagForm} as an example. We finally discuss how to apply the off-diagonal framework from
Theorem~\ref{thm:offdiagOp} to general, not necessarily off-diagonal, perturbations.

We first consider the situation of indefinite or semidefinite bounded perturbations, which has partially been discussed in a
slightly different form in~\cite{NSTTV20}.
For a bounded self-adjoint operator $V$ we define bounded nonnegative operators $V^{(p)}$ and $V^{(n)}$ with
$V = V^{(p)} - V^{(n)}$ via functional calculus by
\begin{equation}\label{eq:defVpVn}
  V^{(p)} := (1 + \sign(V))V / 2
  ,\quad
  V^{(n)} := (\sign(V) - 1)V / 2
  .
\end{equation}
We clearly have $\norm{ V^{(p)} } \le \norm{V}$ and $\norm{ V^{(n)} } \le \norm{V}$.

The following result can be proved in several ways. The proof below is based on Theorem~\ref{thm:genOpInfinitesimal} and is in
its core close to the proofs of Theorems~3.14 and~3.15 in~\cite{NSTTV20}. An alternative proof for part~(b) based on
Theorem~\ref{thm:offdiagOp} is discussed after Remark~\ref{rem:gap} below. The result itself extends Theorem~5 in~\cite{GS99},
which was formulated there for bounded nonpositive $V$ that are relatively compact with respect to the unperturbed operator $A$.

\begin{proposition}\label{prop:boundedPert}
  Let the finite interval $(c,d)$ belong to the resolvent set of the self-adjoint operator $A$, and let $V$ be a bounded
  self-adjoint operator on the same Hilbert space satisfying $\norm{V^{(p)}} + \norm{V^{(n)}} < d - c$ with $V^{(p)}$ and
  $V^{(n)}$ as in~\eqref{eq:defVpVn}. Set $P := \EE_A([d,\infty))$ and $Q := \EE_{A+V}([d-\norm{V^{(n)}},\infty))$. Then:
  \begin{enumerate}
    \renewcommand{\theenumi}{\alph{enumi}}

    \item
    The interval $(c+\norm{V^{(p)}} , d-\norm{V^{(n)}})$ belongs to the resolvent set of the operator $A + V$, and we have
    $\norm{ P - Q } < 1$.

    \item
    With $\cD_+ := \Ran P \cap \Dom(A)$ and $\cD_- := \Ran (I-P) \cap \Dom(A)$ we have
    \begin{equation*}
      \lambda_k\bigl ((A+V)|_{\Ran Q} \bigr)
      =
      \inf_{\substack{\fM_+ \subset \cD_+\\ \dim\fM_+ = k}} \sup_{\substack{x \in \fM_+ \oplus \cD_-\\ \norm{x}=1}}
        \scprod{ x , (A+V)x }
    \end{equation*}
    for all $k \in \NN$ with $k \le \dim \Ran Q$.

    \item
    With $\cE_+ := \Ran Q \cap \Dom(A)$ and $\cE_- := \Ran (I-Q) \cap \Dom(A)$ we have
    \begin{equation*}
      \lambda_k\bigl (A|_{\Ran P} \bigr)
      =
      \inf_{\substack{\fM_+ \subset \cE_+\\ \dim\fM_+ = k}} \sup_{\substack{x \in \fM_+ \oplus \cE_-\\ \norm{x}=1}}
        \scprod{ x , Ax }
    \end{equation*}
    for all $k \in \NN$ with $k \le \dim \Ran P$.

  \end{enumerate}  
\end{proposition}

\begin{proof}
  (a).
  This is Proposition~2.1 and Theorem~1.1 in~\cite{Seel20}, respectively; cf.~also~\cite[Theorem~3.2]{Ves08}. More precisely, the
  variant of the Davis-Kahan $\sin2\Theta$ theorem in~\cite[Theorem~1.1]{Seel20} gives
  \begin{equation*}
  		\norm{ P - Q }
  		\le
  		\sin\Bigl( \frac{1}{2} \arcsin \frac{\norm{V^{(p)}} + \norm{V^{(n)}}}{d-c} \Bigr)
  		<
  		\frac{\sqrt{2}}{2}
  		<
  		1
    .
  \end{equation*}
  In particular, the subspaces $\Ran P$ and $\Ran Q$ have the same dimension; cf.~also Remark~\ref{rem:PQbij}\,(1) below.

  (b).
  Pick $\gamma \in (c + \norm{V^{(p)}} , d - \norm{V^{(n)}})$. By part~(a) we then have $\EE_{A-\gamma}((0,\infty)) = P$ and
  $\EE_{A+V-\gamma}((0,\infty)) = Q$. Moreover, for $x \in \cD_-$ we have
  \begin{equation}\label{eq:boundPertGap}
   \begin{aligned}
    \scprod{ x , (A+V-\gamma)x }
    &=
    \scprod{ x , (A-\gamma)x } + \scprod{ x , V^{(p)}x } - \scprod{ x , V^{(n)}x }\\
    &\le
    (c - \gamma + \norm{V^{(p)}}) \norm{x}^2
    <
    0
    .
   \end{aligned}
  \end{equation}
  In light of part~(a), the claim now follows from Theorem~\ref{thm:genOpInfinitesimal} with $P_+ = P$ and $Q_+ = Q$ upon a
  spectral shift by $\gamma$.

  (c).
  Similarly as in (b), pick $\gamma \in (c + \norm{V^{(p)}} + \norm{V^{(n)}} , d)$. We then have $\EE_{A-\gamma}((0,\infty)) = P$
  and for some chosen $\rho \in (c + \norm{V^{(p)}} , d - \norm{V^{(n)}})$ also $\EE_{A+V-\rho}((0,\infty)) = Q$ . Moreover, for
  $x \in \cE_-$ we have
  \begin{equation}\label{eq:boundPertSwitchedGap}
   \begin{aligned}
    \scprod{ x , (A-\gamma)x }
    &=
    \scprod{ x , (A+V-\gamma)x } - \scprod{ x , V^{(p)}x } + \scprod{ x , V^{(n)}x }\\
    &\le
    (c + \norm{V^{(p)}} - \gamma + \norm{V^{(n)}}) \norm{x}^2
    <
    0
    .
   \end{aligned}
  \end{equation}
  In light of part~(a), the claim now follows analogously from Theorem~\ref{thm:genOpInfinitesimal} with switched roles of $A$
  and $A+V$ and with $P_+ = Q$ and $Q_+ = P$.
\end{proof}%

\begin{remark}\label{rem:boundedPert}
  (1)
  A corresponding representation of the minimax values in terms of the forms associated with $A+V$ and $A$, respectively, as in
  Theorems~\ref{thm:genOpInfinitesimal}--\ref{thm:offdiagOp} holds here as well. However, for the sake of simplicity and since
  this is not needed in Corollaries~\ref{cor:monotonicity} and~\ref{cor:continuity} below, this has not been formulated in
  Proposition~\ref{prop:boundedPert}.

  (2)
  Part~(b) of Proposition~\ref{prop:boundedPert} can also be deduced from~\cite{DES00b}. Indeed, for $y \in \cD_+$ we analogously
  have
  \begin{equation*}
    \scprod{ y , (A+V-\gamma)y }
    \geq
    (d - \gamma - \norm{V^{(n)}}) \norm{y}^2
    >
    0
    ,    
  \end{equation*}
  which together with~\eqref{eq:boundPertGap} implies that the operator analogue to condition~\eqref{eq:MM} is satisfied.
  However, the situation is less clear for part~(c) of Proposition~\ref{prop:boundedPert}. Here, for $y \in \cE_+$ we get
  \begin{equation*}
    \scprod{ y , (A-\gamma)y }
    \geq
    (d - \norm{V^{(n)}} - \gamma - \norm{V^{(p)}}) \norm{y}^2
    ,
  \end{equation*}
  which is positive only if $\gamma < d - \norm{V^{(p)}} - \norm{V^{(n)}}$. Together with the condition
  $\gamma > c + \norm{V^{(p)}} + \norm{V^{(n)}}$ in~\eqref{eq:boundPertSwitchedGap}, this requires the~\emph{stronger} assumption
  $\norm{V^{(p)}} + \norm{V^{(n)}} < (d-c)/2$. The latter can be somehow remedied with a continuity argument, but in the end the
  approach presented in the proof of Proposition~\ref{prop:boundedPert} above is just more convenient.
\end{remark}

The above proposition includes the particular cases where $V$ satisfies $\norm{V} < (d - c)/2$ and where $V$ is semidefinite with
$\norm{V} < d - c$, which in the context of part~(c) have essentially been discussed in the proofs of Theorems~3.14 and~3.15
in~\cite{NSTTV20}. However, Proposition~\ref{prop:boundedPert} allows also certain indefinite perturbations $V$ with
$(d - c)/2 \le \norm{V} < d - c$ that were not covered before and may thus be used to refine the results in~\cite{NSTTV20}.

As immediate corollaries to Proposition~\ref{prop:boundedPert}, we obtain the following monotonicity and continuity statements
for the minimax values in gaps of the essential spectrum, which, in essence, reproduce particular cases of results
in~\cite{Ves08}.

\begin{corollary}\label{cor:monotonicity}
  Let $A$ be as in Proposition~\ref{prop:boundedPert}, and let $V_0$ and $V_1$ be bounded self-adjoint operators on the same
  Hilbert space satisfying the condition
  $\max\{ \norm{V_0^{(p)}} + \norm{V_0^{(n)}} , \norm{V_1^{(p)}} + \norm{V_1^{(n)}} \} < d - c$.

  If, in addition, $V_0 \le V_1$, then
  \begin{equation*}
    \lambda_k\bigl( (A+V_0)|_{\Ran\EE_{A+V_0}([d-\norm{V_0^{(n)}},\infty))} \bigr)
    \le
    \lambda_k\bigl( (A+V_1)|_{\Ran\EE_{A+V_1}([d-\norm{V_1}^{(n)},\infty))} \bigr)
  \end{equation*}
  for $k \le \dim\Ran\EE_A([d,\infty)) = \dim\Ran\EE_{A+V_j}([d-\norm{V_j^{(n)}},\infty))$, $j\in\{0,1\}$.
\end{corollary}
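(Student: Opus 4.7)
The plan is to reduce the corollary to a transparent monotonicity argument by invoking Proposition~\ref{prop:boundedPert} for both $V_0$ and $V_1$ separately. The crucial observation that makes this work is that the minimax representation provided by the proposition takes the infimum over subspaces $\fM_+ \subset \cD_+$ and the supremum over $\fM_+ \oplus \cD_-$, and these families depend only on the unperturbed operator $A$, not on the perturbation. This removes precisely the obstacle described in the introduction, namely that minimax representations for eigenvalues in gaps usually involve spectral subspaces that depend on the operator itself.

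First I would apply Proposition~\ref{prop:boundedPert} twice, once to $A + V_0$ and once to $A + V_1$, using in each case the hypothesis $\norm{V_j^{(p)}} + \norm{V_j^{(n)}} < d - c$. This yields both the asserted matching of dimensions $\dim\Ran\EE_A([d,\infty)) = \dim\Ran\EE_{A+V_j}([d-\norm{V_j^{(n)}},\infty))$ for $j \in \{0,1\}$ and the identity
\begin{equation*}
  \lambda_k\bigl( (A+V_j)|_{\Ran\EE_{A+V_j}([d-\norm{V_j^{(n)}},\infty))} \bigr)
  =
  \inf_{\substack{\fM_+ \subset \cD_+\\ \dim\fM_+ = k}} \sup_{\substack{x \in \fM_+ \oplus \cD_-\\ \norm{x}=1}} \scprod{ x , (A+V_j)x }
\end{equation*}
for every admissible $k$, where $\cD_\pm$ are as in the statement of the proposition and, importantly, are the same subsets on both sides.

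Next I would exploit the assumption $V_0 \le V_1$, which in the sense of bounded self-adjoint operators means $\scprod{ x , V_0 x } \le \scprod{ x , V_1 x }$ for every $x$ in the Hilbert space. In particular, for every $x \in \fM_+ \oplus \cD_- \subset \Dom(A)$,
\begin{equation*}
  \scprod{ x , (A + V_0)x } \le \scprod{ x , (A+V_1)x }.
\end{equation*}
Taking the supremum over unit vectors in $\fM_+ \oplus \cD_-$ and then the infimum over $k$-dimensional subspaces $\fM_+ \subset \cD_+$ preserves this inequality, which by the displayed identity is exactly the conclusion.

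I do not anticipate any real obstacle here: the entire force of the argument is absorbed into Proposition~\ref{prop:boundedPert}, whose representation formula makes the monotonicity step essentially automatic. The only point requiring care is checking that the same index range $k \le \dim\Ran\EE_A([d,\infty))$ is admissible on both sides simultaneously, and this is built into the dimension equality asserted by the proposition.
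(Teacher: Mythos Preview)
Your proposal is correct and matches the paper's intended approach: the corollary is stated without an explicit proof precisely because it follows immediately from Proposition~\ref{prop:boundedPert} via the pointwise inequality $\scprod{x,(A+V_0)x} \le \scprod{x,(A+V_1)x}$, exactly as you describe (cf.~the analogous one-line proof of Corollary~\ref{cor:continuity}).
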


\begin{corollary}\label{cor:continuity}
  Let $A$ and $V$ be as in Proposition~\ref{prop:boundedPert}. Then, the interval $(c+\norm{V^{(p)}} , d-\norm{V^{(n)}})$ belongs
  to the resolvent set of every $A+tV$, $t \in [0,1]$, and for each
  $k \le \dim \Ran \EE_A([d,\infty)) = \dim \Ran \EE_{A+tV}([d-t\norm{V^{(n)}},\infty))$, $t \in [0,1]$, the mapping
  \begin{equation*}
    [0,1]
    \ni
    t
    \mapsto
    \lambda_k((A+tV)|_{\Ran\EE_{A+tV}([d-t\norm{V^{(n)}},\infty))})
  \end{equation*}
  is Lipschitz continuous with Lipschitz constant $\norm{V}$.
\end{corollary}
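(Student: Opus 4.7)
The plan is to apply Proposition~\ref{prop:boundedPert} to each operator $A+tV$ with $t \in [0,1]$ and then exploit that the resulting minimax representation involves subspaces $\cD_\pm$ that depend only on $A$, not on $t$. The Lipschitz bound is then an immediate consequence of linearity of $t \mapsto \scprod{x,(A+tV)x}$.

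For $t \in [0,1]$, the spectral decomposition yields $(tV)^{(p)} = tV^{(p)}$ and $(tV)^{(n)} = tV^{(n)}$, so that
\begin{equation*}
  \norm{(tV)^{(p)}} + \norm{(tV)^{(n)}}
  =
  t\bigl(\norm{V^{(p)}} + \norm{V^{(n)}}\bigr)
  <
  d-c.
\end{equation*}
Hence Proposition~\ref{prop:boundedPert} applies with $V$ replaced by $tV$ and gives, on the one hand, that $(c+t\norm{V^{(p)}} , d-t\norm{V^{(n)}})$ belongs to the resolvent set of $A+tV$ and that $\dim\Ran\EE_{A+tV}([d-t\norm{V^{(n)}},\infty)) = \dim\Ran\EE_A([d,\infty))$. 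Since $t \le 1$, the interval $(c+\norm{V^{(p)}}, d-\norm{V^{(n)}})$ is contained in $(c+t\norm{V^{(p)}}, d-t\norm{V^{(n)}})$, which proves the first assertion.

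On the other hand, and this is the crux of the argument, Proposition~\ref{prop:boundedPert} also provides the representation
\begin{equation*}
  \lambda_k\bigl((A+tV)|_{\Ran\EE_{A+tV}([d-t\norm{V^{(n)}},\infty))}\bigr)
  =
  \inf_{\substack{\fM_+ \subset \cD_+\\ \dim\fM_+ = k}} \sup_{\substack{x \in \fM_+ \oplus \cD_-\\ \norm{x}=1}}
    \scprod{ x , (A+tV)x },
\end{equation*}
where the subspaces $\cD_\pm = \Ran\EE_A\bigl([d,\infty)\bigr) \cap \Dom(A)$ and $\Ran\EE_A\bigl((-\infty,c]\bigr) \cap \Dom(A)$ are the same for every $t \in [0,1]$.

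With these $t$-independent families of subspaces at our disposal, Lipschitz continuity is routine. For $s,t \in [0,1]$ and any $x$ with $\norm{x}=1$ one has
\begin{equation*}
  \bigl|\scprod{x,(A+tV)x} - \scprod{x,(A+sV)x}\bigr|
  =
  |t-s|\,\bigl|\scprod{x,Vx}\bigr|
  \le
  |t-s|\,\norm{V}.
\end{equation*}
For each fixed $k$-dimensional $\fM_+ \subset \cD_+$, taking the supremum over unit vectors $x \in \fM_+ \oplus \cD_-$ and subsequently the infimum over $\fM_+$ preserves this estimate, yielding the claimed Lipschitz bound. The only conceptual obstacle is ensuring that the minimax representation is over $t$-independent subspaces, which is precisely the feature that Proposition~\ref{prop:boundedPert} was designed to supply.
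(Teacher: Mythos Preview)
Your proof is correct and follows essentially the same approach as the paper's: both apply Proposition~\ref{prop:boundedPert} to each $A+tV$ to obtain the minimax representation over $t$-independent subspaces $\cD_\pm$, and then derive the Lipschitz bound from the elementary two-sided estimate $\bigl|\scprod{x,(A+tV)x} - \scprod{x,(A+sV)x}\bigr| \le |t-s|\,\norm{V}$. Your version simply makes explicit the verification that $(tV)^{(p)} = tV^{(p)}$, $(tV)^{(n)} = tV^{(n)}$, and the resolvent set inclusion, which the paper leaves implicit.
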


\begin{proof}
  Taking into account that
  \begin{equation*}
    \scprod{ x , (A+sV)x } - \abs{t-s}\norm{V}
    \le
    \scprod{ x , (A+tV)x }
    \le
    \scprod{ x , (A+sV)x } + \abs{t-s}\norm{V}
  \end{equation*}
  for all $x \in \Dom(A)$, the claim follows immediately from Proposition~\ref{prop:boundedPert}.
\end{proof}%

It should again be mentioned that the above statements include the particular cases where the norm of the perturbations is less
than $(d - c)/2$ or where the perturbations are semidefinite with a norm less than $d - c$. These cases have essentially been
discussed in~\cite{NSTTV20}. There, especially lower bounds on the movement of eigenvalues in gaps of the essential spectrum
under certain conditions and the behaviour of edges of the essential spectrum have been studied. However, since this is not the
main focus of the present work, this is not pursued further here.

As a consequence of Theorem~\ref{thm:offdiagOp}, we obtain the following lower bound for the minimax values in the setting
of off-diagonal operator perturbations.
\begin{corollary}\label{cor:lowerboundOffOp}
  In the situation of Theorem~\ref{thm:offdiagOp}, we have
  \begin{equation*}
    \lambda_k(A|_{\Ran P_+})
    \le
    \lambda_k(B|_{\Ran Q_+})
  \end{equation*}
  for all $k \le \dim\Ran P_+ = \dim \Ran Q_+$.
\end{corollary}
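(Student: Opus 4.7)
The plan is to deduce the corollary directly from Theorem~\ref{thm:offdiagOp}, which already supplies a variational formula for $\lambda_k(B|_{\Ran Q_+})$ whose outer infimum runs over exactly the same family of $k$-dimensional subspaces $\fM_+\subset\cD_+$ as the standard formula for $\lambda_k(A|_{\Ran P_+})$ recalled in the introduction. This reduces the claim to showing, for every fixed $k$-dimensional $\fM_+\subset\cD_+$, that
\[
  \sup_{\substack{x\in\fM_+\oplus\cD_-\\ \norm{x}=1}} \scprod{x,Ax}
  \;\le\;
  \sup_{\substack{x\in\fM_+\oplus\cD_-\\ \norm{x}=1}} \scprod{x,Bx}.
\]

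Next I would reduce the left-hand supremum to a supremum over $\fM_+$ alone. Writing $x = x_+ + x_-$ with $x_\pm\in\cD_\pm$ (which are orthogonal by construction), one has $\scprod{x_+,Ax_+}\ge 0$ and $\scprod{x_-,Ax_-}\le 0$ by the definitions of $\cD_\pm$, so
\[
  \scprod{x,Ax} \;=\; \scprod{x_+,Ax_+} + \scprod{x_-,Ax_-} \;\le\; \scprod{x_+,Ax_+}.
\]
Because $\norm{x_+}\le\norm{x}=1$ and $\scprod{x_+,Ax_+}\ge 0$, the supremum on the left is attained when $x_-=0$, giving the identity $\sup_{x\in\fM_+\oplus\cD_-,\,\norm{x}=1}\scprod{x,Ax}=\sup_{x_+\in\fM_+,\,\norm{x_+}=1}\scprod{x_+,Ax_+}$.

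The key step is then to use the off-diagonality of $V$: for $x_+\in\cD_+$ one has $P_+x_+=x_+$, and the assumption $P_+VP_+x_+=0$ together with the self-adjointness of $P_+$ yields $\scprod{x_+,Vx_+}=\scprod{x_+,P_+VP_+x_+}=0$. Consequently $\scprod{x_+,Bx_+}=\scprod{x_+,Ax_+}$ on $\cD_+$. Combining this with the trivial inclusion $\{x_+\in\fM_+:\norm{x_+}=1\}\subset\{x\in\fM_+\oplus\cD_-:\norm{x}=1\}$ gives
\[
  \sup_{\substack{x\in\fM_+\oplus\cD_-\\ \norm{x}=1}} \scprod{x,Bx}
  \;\ge\;
  \sup_{\substack{x_+\in\fM_+\\ \norm{x_+}=1}} \scprod{x_+,Bx_+}
  \;=\;
  \sup_{\substack{x_+\in\fM_+\\ \norm{x_+}=1}} \scprod{x_+,Ax_+},
\]
which together with the previous paragraph yields the pointwise comparison. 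Taking the infimum over $k$-dimensional $\fM_+\subset\cD_+$ and invoking Theorem~\ref{thm:offdiagOp} on the right-hand side then finishes the proof.

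There is no real obstacle here; the statement is essentially a direct corollary, the decisive observation being that an off-diagonal perturbation contributes nothing to the quadratic form on $\cD_+$, so the variational problem for $B$ dominates that for $A$ over each admissible test subspace.
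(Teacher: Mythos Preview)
Your proof is correct and follows essentially the same route as the paper's. The paper's argument is slightly more streamlined: it starts directly from the standard Courant minimax $\lambda_k(A|_{\Ran P_+})=\inf_{\fM_+}\sup_{x\in\fM_+,\,\norm{x}=1}\scprod{x,Ax}$ (so your reduction of the $\fM_+\oplus\cD_-$ supremum to the $\fM_+$ supremum, while correct, is not needed), then uses $\scprod{x,Vx}=0$ on $\cD_+$ and the inclusion $\fM_+\subset\fM_+\oplus\cD_-$ exactly as you do.
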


\begin{proof}
  Let $\fM_+ \subset \cD_+$ with $\dim\fM_+ = k$. Since $\scprod{ x , Vx } = 0$ for all $x \in \cD_+$ by hypothesis, we have
  \begin{equation*}
    \sup_{\substack{x \in \fM_+\\ \norm{x}=1}} \scprod{ x , Ax }
    =
    \sup_{\substack{x \in \fM_+\\ \norm{x}=1}} \scprod{ x , (A+V)x }
    \le
    \sup_{\substack{x \in \fM_+ \oplus \cD_-\\ \norm{x}=1}} \scprod{ x , (A+V)x }
    .
  \end{equation*}
  Taking the infimum over all such subspaces $\fM_+$ proves the claim by Theorem~\ref{thm:offdiagOp} and the standard minimax
  values for $A|_{\Ran P_+}$.
\end{proof}%

As in Corollary~\ref{cor:continuity}, we also obtain a continuity statement in the situation of Theorem~\ref{thm:offdiagOp} with
bounded off-diagonal perturbations. Here, however, we do not have to impose any condition on the norm of the perturbation.

\begin{corollary}\label{cor:continuityOffOp}
  Let $A$ and $V$ be as in Theorem~\ref{thm:offdiagOp}, and suppose that $V$ is bounded. Then, for each
  $k \le \dim \Ran\EE_A((0,\infty)) = \dim \Ran\EE_{A+tV}((0,\infty))$, $t \in \RR$, the mapping
  \begin{equation*}
    \RR
    \ni
    t
    \mapsto
    \lambda_k\bigl( (A+tV)|_{\Ran\EE_{A+tV}((0,\infty))} \bigr)
  \end{equation*}
  is Lipschitz continuous with Lipschitz constant $\norm{V}$.
\end{corollary}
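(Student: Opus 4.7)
The plan is to apply Theorem~\ref{thm:offdiagOp} to the family $B_t := A + tV$, $t \in \RR$, and exploit that the resulting minimax representation involves an infimum-supremum over a family of subspaces that is \emph{independent} of $t$.

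First, I would check the hypotheses of Theorem~\ref{thm:offdiagOp} for $B_t$: since $V$ is bounded and symmetric, $tV$ is $A$-bounded with $A$-bound $0 < 1$, and the off-diagonality condition $P_- (tV) P_- x = 0 = P_+ (tV) P_+ x$ on $\Dom(A)$ is inherited from $V$. Theorem~\ref{thm:offdiagOp} then yields $\dim\Ran P_+ = \dim\Ran \EE_{A+tV}((0,\infty))$ for every $t \in \RR$ (in particular, this dimension is constant in $t$) together with
\begin{equation*}
  \lambda_k\bigl((A+tV)|_{\Ran\EE_{A+tV}((0,\infty))}\bigr)
  =
  \inf_{\substack{\fM_+\subset\cD_+\\ \dim\fM_+=k}} \sup_{\substack{x\in\fM_+ \oplus \cD_-\\ \norm{x}=1}} \scprod{ x , (A+tV)x }
\end{equation*}
for all admissible $k$, with $\cD_\pm = \Ran P_\pm \cap \Dom(A)$ depending only on $A$.

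The Lipschitz estimate then follows from a standard elementary argument: for any $s,t \in \RR$ and any normalized $x \in \Dom(A)$,
\begin{equation*}
  \abs{\scprod{x, (A+tV)x} - \scprod{x, (A+sV)x}}
  =
  \abs{t-s}\,\abs{\scprod{x, Vx}}
  \le
  \abs{t-s}\,\norm{V},
\end{equation*}
so $\scprod{x, (A+tV)x} \le \scprod{x, (A+sV)x} + \abs{t-s}\norm{V}$ and vice versa. Taking the supremum over $x \in \fM_+ \oplus \cD_-$ with $\norm{x}=1$ and then the infimum over $k$-dimensional $\fM_+ \subset \cD_+$ preserves these uniform bounds, yielding
\begin{equation*}
  \abs{\lambda_k\bigl((A+tV)|_{\Ran\EE_{A+tV}((0,\infty))}\bigr) - \lambda_k\bigl((A+sV)|_{\Ran\EE_{A+sV}((0,\infty))}\bigr)}
  \le
  \abs{t-s}\,\norm{V}.
\end{equation*}

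There is essentially no obstacle here; the whole point is that Theorem~\ref{thm:offdiagOp} produces a representation whose family of trial subspaces $\fM_+ \oplus \cD_-$ does not move with $t$, so the Lipschitz continuity of $t \mapsto \scprod{x, (A+tV)x}$ on the unit sphere transfers directly to the minimax values. The only slightly delicate point is noting that $tV$ for $t = 0$ (where $B_0 = A$) still produces the same dimension count, which is immediate from Theorem~\ref{thm:offdiagOp} applied with the zero perturbation, or equivalently from the fact that $\dim\Ran P_+ = \dim\Ran\EE_{A+tV}((0,\infty))$ for all $t$.
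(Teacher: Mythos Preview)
Your proposal is correct and follows essentially the same approach the paper intends: the paper does not spell out a proof for this corollary but simply remarks ``As in Corollary~\ref{cor:continuity}\ldots'', and the proof of Corollary~\ref{cor:continuity} is precisely the two-sided estimate $\scprod{ x , (A+sV)x } - \abs{t-s}\norm{V} \le \scprod{ x , (A+tV)x } \le \scprod{ x , (A+sV)x } + \abs{t-s}\norm{V}$ combined with the minimax representation, here supplied by Theorem~\ref{thm:offdiagOp} instead of Proposition~\ref{prop:boundedPert}.
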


In the particular case where $B$ is semibounded, Theorem~\ref{thm:offdiagForm} allows us to extend
Corollaries~\ref{cor:lowerboundOffOp} and~\ref{cor:continuityOffOp} to some degree to off-diagonal form perturbations. Recall
here, that semiboundedness of $B$ implies that also $A$ is semibounded, see the proof of Theorem~\ref{thm:offdiagForm} below.

\begin{corollary}\label{cor:offForm}
  Assume the hypotheses of Theorem~\ref{thm:offdiagForm}.
  \begin{enumerate}
    \renewcommand{\theenumi}{\alph{enumi}}

    \item For each $k \in \NN$ with $k \le \dim\Ran P_+ = \dim\Ran Q_+$ one has
          $\lambda_k(A|_{\Ran P_+}) \le \lambda_k(B|_{\Ran Q_+})$.

    \item Denote for $t \in (-1/b , 1/b)$ by $B_t$ the self-adjoint operator associated with the form $\fb_t := \fa + t\fv$ with
          form domain $\Dom[\fb_t] := \Dom[\fa]$. Then, for each
          $k \le \dim\Ran\EE_A((0,\infty)) = \dim\Ran\EE_{B_t}((0,\infty))$, the mapping
          \begin{equation*}
            ( -1/b , 1/b )
            \ni
            t
            \mapsto
            \lambda_k(B_t|_{\Ran\EE_{B_t}((0,\infty))})
          \end{equation*}
          is locally Lipschitz continuous.

  \end{enumerate}
\end{corollary}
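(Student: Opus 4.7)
Under the off-diagonality hypothesis of Theorem~\ref{thm:offdiagForm}, one has $\fv[x,x] = \fv[P_+x,P_+x] = 0$ for all $x \in \fD_+$, so $\fb$ agrees with $\fa$ on $\fD_+$. Hence, for any $\fM_+ \subset \fD_+$ with $\dim\fM_+ = k$,
\begin{equation*}
  \sup_{\substack{x \in \fM_+ \\ \norm{x}=1}} \fa[x,x]
  =
  \sup_{\substack{x \in \fM_+ \\ \norm{x}=1}} \fb[x,x]
  \le
  \sup_{\substack{x \in \fM_+ \oplus \fD_- \\ \norm{x}=1}} \fb[x,x].
\end{equation*}
Taking the infimum over $\fM_+$ converts the right-hand side into $\lambda_k(B|_{\Ran Q_+})$ by Theorem~\ref{thm:offdiagForm} and the left-hand side into $\lambda_k(A|_{\Ran P_+})$ by the standard form-based Courant minimax. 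The latter is applicable because $A|_{\Ran P_+}$ is lower semibounded: $A$ is semibounded and $\sigma(A|_{\Ran P_+}) \subset (0,\infty)$.

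\textbf{Part (b).} For every $t \in (-1/b,1/b)$ the form $t\fv$ is off-diagonal with respect to $\Ran P_+ \oplus \Ran P_-$ and satisfies~\eqref{eq:formRelBound} with constants $|t|a,\,|t|b$, where $|t|b < 1$. KLMN then makes $B_t$ self-adjoint and semibounded in the same direction as $A$, so Theorem~\ref{thm:offdiagForm} applied to $(A,B_t)$ gives $\dim\Ran\EE_{B_t}((0,\infty)) = \dim\Ran P_+$ together with
\begin{equation*}
  \lambda_k\bigl(B_t|_{\Ran\EE_{B_t}((0,\infty))}\bigr)
  =
  \inf_{\substack{\fM_+ \subset \fD_+ \\ \dim\fM_+ = k}}
  \sup_{\substack{x \in \fM_+ \oplus \fD_- \\ \norm{x}=1}} \fb_t[x,x]
\end{equation*}
over the \emph{same} family of subspaces for all $t$ in the range.

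Fix $t_0 \in (-1/b,1/b)$ and a compact subinterval $I \subset (-1/b,1/b)$ containing $t_0$. Substituting $|\fa[x,x]| \le |\fb_s[x,x]| + |s||\fv[x,x]|$ into~\eqref{eq:formRelBound} and solving for $|\fv[x,x]|$ gives the KLMN-type bound
\begin{equation*}
  |\fv[x,x]|
  \le
  \frac{a\norm{x}^2 + b|\fb_s[x,x]|}{1 - |s|b},
\end{equation*}
with constants locally uniform in $s \in I$. An analogous calculation produces a uniform lower bound $\fb_s \ge c_*$ for $s \in I$. For any $\fM_+$ of dimension $k$ with $\sup_{x \in \fM_+ \oplus \fD_-,\,\norm{x}=1} \fb_s[x,x] \le M$, the range $[c_*,M]$ of $\fb_s[x,x]$ on such unit vectors forces $|\fv[x,x]|$ to be uniformly bounded there. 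Substituting into $\fb_t = \fb_s + (t-s)\fv$, taking $\sup$ over $x$, infimum over such $\fM_+$ with $M \downarrow \lambda_k(B_s|_{\Ran\EE_{B_s}((0,\infty))})$, and finally symmetrizing in $(s,t)$ yields the local Lipschitz estimate.

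\textbf{Main obstacle.} The chief difficulty in part (b) is that $\fv$ is \emph{not} form-bounded with respect to $\norm{\cdot}^2$ alone: its relative bound~\eqref{eq:formRelBound} involves $|\fa[x,x]|$, which is unbounded on unit vectors of $\fD_+ \oplus \fD_-$. The remedy is to re-express the relative bound in terms of $\fb_s$ itself, and to exploit that only subspaces nearly optimal for the $\fb_s$-minimax matter---on those the uniform lower semiboundedness of $B_s$ across $I$ confines $\fb_s[x,x]$ (and hence $\fv[x,x]$) to a bounded range.
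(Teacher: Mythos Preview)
Your part~(a) is correct and essentially identical to the paper's argument.

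For part~(b), your overall strategy---rewrite the form bound for $\fv$ in terms of $\fb_s$ and exploit the common minimax representation---is also the paper's. The execution differs in one point, and that is where a gap appears. You retain the absolute value, obtaining
\[
  |\fv[x,x]| \le \frac{a\norm{x}^2 + b|\fb_s[x,x]|}{1-|s|b},
\]
and then argue that on nearly optimal $\fM_+ \oplus \fD_-$ the values $\fb_s[x,x]$ lie in a compact range $[c_*,M]$, forcing a uniform bound on $|\fv[x,x]|$. This works only when $A$ is \emph{lower} semibounded. Theorem~\ref{thm:offdiagForm} also admits upper semibounded $A$ (and hence $B_s$); in that case $\fb_s$ has no uniform lower bound on $\fD_-$, so $|\fb_s[x,x]|$---and with it your bound on $|\fv[x,x]|$---is genuinely unbounded on $\fM_+ \oplus \fD_-$. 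Your ``confined range'' step therefore fails there.

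The paper's remedy is to first remove the absolute value from $|\fa|$ using semiboundedness: with $m$ the lower (resp.\ upper) bound of $\fa$ one has $|\fa[x,x]| \le \fa[x,x] + (|m|-m)\norm{x}^2$ (resp.\ $|\fa[x,x]| \le -\fa[x,x] + (m+|m|)\norm{x}^2$). This yields a \emph{linear} bound $|\fv| \le \tilde{a}\norm{\cdot}^2 \pm b\fb_s$ (no absolute value on $\fb_s$), from which one gets a direct sandwich of $\fb_t$ between affine functions of $\fb_s$ with positive leading coefficient. Taking $\sup$ and $\inf$ then produces the estimate
\[
  |\lambda_k(t)-\lambda_k(s)| \le \frac{\tilde{a}|t-s|}{1-b|s|} + \frac{b|t-s|}{1-b|s|}\,|\lambda_k(s)|,
\]
which gives continuity first and then local Lipschitz continuity in a second step. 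In the lower semibounded case your argument is salvageable and leads to essentially the same estimate; the fix for the upper semibounded case is precisely this sign-aware linear bound rather than a uniform bound on $|\fv|$.
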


\begin{proof}
  (a).
  Taking into account that $\fv[ x , x ] = 0$ for all $x \in \fD_+$ by hypothesis, the inequality
  $\lambda_k(A|_{\Ran P_+}) \le \lambda_k(B|_{\Ran Q_+})$ is proved by means of Theorem~\ref{thm:offdiagForm} in a way analogous
  to Corollary~\ref{cor:lowerboundOffOp}.

  (b).
  Recall that each $B_t$ is indeed a semibounded self-adjoint operator with $\Dom[\fb_t] = \Dom(\abs{B_t}^{1/2})$ by the
  well-known KLMN theorem, and note that each $t\fv$ satisfies the hypotheses of Theorem~\ref{thm:offdiagForm}.
  Pick $t,s \in (-1/b , 1/b)$ with $b\abs{t-s} \le 1-b\abs{s}$.
  
  Consider first the case where $A$ (and hence $\fa$) is lower semibounded with lower bound $m \in \RR$. We then have
  $\abs{\fa[x,x]} \le \fa[x,x] + (\abs{m}-m)\norm{x}^2$ for all $x \in \Dom[\fa]$. With $\tilde{a} := a + b\abs{m} - bm$, this
  gives
  \begin{equation*}
    \abs{\fv[x,x]}
    \le
    \tilde{a}\norm{x}^2 + b\fa[x,x]
    \le
    \tilde{a}\norm{x}^2 + b\fb_s[x,x] + b\abs{s}\abs{\fv[x,x]}
  \end{equation*}
  and, hence,
  \begin{equation*}
    \abs{\fv[x,x]}
    \le
    \frac{\tilde{a}}{1-b\abs{s}}\norm{x}^2 + \frac{b}{1-b\abs{s}}\fb_s[x,x]
  \end{equation*}
  for all $x \in \Dom[\fa] = \Dom[\fb_s]$. Since $\fb_t = \fb_s + (t-s)\fv$, we thus obtain
  \begin{equation*}
    -\frac{\tilde{a}\abs{t-s}}{1-b\abs{s}} + \Bigl( 1 - \frac{b\abs{t-s}}{1-b\abs{s}} \Bigr) \fb_s
    \le
    \fb_t
    \le
    \frac{\tilde{a}\abs{t-s}}{1-b\abs{s}} + \Bigl( 1 + \frac{b\abs{t-s}}{1-b\abs{s}} \Bigr) \fb_s
    .
  \end{equation*}
  Abbreviating $\lambda_k(t):=\lambda_k(B_t|_{\Ran\EE_{B_t}((0,\infty))})$, Theorem~\ref{thm:offdiagForm} then implies that
  \begin{equation*}
    -\frac{\tilde{a}\abs{t-s}}{1-b\abs{s}} + \Bigl( 1 - \frac{b\abs{t-s}}{1-b\abs{s}} \Bigr) \lambda_k(s)
    \le
    \lambda_k(t)
    \le
    \frac{\tilde{a}\abs{t-s}}{1-b\abs{s}} + \Bigl( 1 + \frac{b\abs{t-s}}{1-b\abs{s}} \Bigr) \lambda_k(s)
  \end{equation*}
  and, therefore,
  \begin{equation}\label{eq:Lipschitz}
    \abs{\lambda_k(t) - \lambda_k(s)}
    \le
    \frac{\tilde{a}\abs{t-s}}{1-b\abs{s}} + \frac{b\abs{t-s}}{1-b\abs{s}} \abs{\lambda_k(s)}
    .
  \end{equation}
  This proves that $t \mapsto \lambda_k(t)$ is continuous on $(-1/b,1/b)$ and, in particular, bounded on every compact
  subinterval of $(-1/b , 1/b)$. In turn, it then easily follows from~\eqref{eq:Lipschitz} that this mapping is even locally
  Lipschitz continuous, which concludes the case where $A$ is lower semibounded.

  If $A$ is upper semibounded with upper bound $m \in \RR$, we proceed similarly. We then have
  $\abs{\fa[x,x]} \le -\fa[x,x] + (m+\abs{m})\norm{x}^2$ for all $x \in \Dom[\fa]$. With $\tilde{a} := a + bm + b\abs{m}$, this
  leads to
  \begin{equation*}
    \abs{\fv[x,x]}
    \le
    \frac{\tilde{a}}{1-b\abs{s}}\norm{x}^2 - \frac{b}{1-b\abs{s}}\fb_s[x,x]
  \end{equation*}
  for all $x \in \Dom[\fa] = \Dom[\fb_s]$. Analogously as above, we then eventually obtain again~\eqref{eq:Lipschitz}, which
  proves the claim in the case where $A$ is upper semibounded. This completes the proof.
\end{proof}%

\begin{remark}\label{rem:offForm}
  In part~(a) of Corollary~\ref{cor:offForm}, one can also give an upper bound for $\lambda_k(B|_{\Ran Q_+})$ in terms of the
  form bounds of $\fv$: If $A$ is lower semibounded with lower bound $m \in \RR$, then we have as in the proof of part~(b) of
  Corollary~\ref{cor:offForm} that
  \begin{equation*}
    \abs{ \fv[ x , x ] }
    \le
    (a + b\abs{m} - bm)\norm{x}^2 + b\fa[ x , x ]
  \end{equation*}
  for all $x \in \Dom[\fa]$, leading to
  \begin{equation*}
    \lambda_k(B|_{\Ran Q_+})
    \le
    (1+b)\lambda_k(A|_{\Ran P_+}) + (a + b\abs{m} - bm)
  \end{equation*}
  for all $k \le \dim\Ran P_+ = \dim\Ran Q_+$. Similarly, if $A$ is upper semibounded with upper bound $m \in \RR$, we have
  \begin{equation*}
    \abs{ \fv[ x , x ] }
    \le
    (a + b\abs{m} + bm)\norm{x}^2 - b\fa[ x , x ]
  \end{equation*}
  for all $x \in \Dom[\fa]$. If, in addition, $b \le 1$, this then leads to
  \begin{equation*}
    \lambda_k(B|_{\Ran Q_+})
    \le
    (1-b)\lambda_k(A|_{\Ran P_+}) + (a + b\abs{m} + bm)
  \end{equation*}
  for all $k \le \dim\Ran P_+ = \dim\Ran Q_+$.
\end{remark}

\subsection*{An example. The Stokes operator}
We now briefly revisit the Stokes operator in the framework of Theorem~\ref{thm:offdiagForm}. Here, we mainly rely
on~\cite{GKMSV19}, but the reader is referred also to~\cite[Section~7]{GKMSV17},~\cite[Chapter~5]{SchmDiss},
~\cite{FFMM00}, and the references cited therein.

Let $\Omega \subset \RR^n$, $n \ge 2$, be a bounded domain with $C^2$-boundary, and let $\nu > 0$ and $v_* \ge 0$. On the Hilbert
space $\cH = \cH_+ \oplus \cH_-$ with $\cH_+ = L^2(\Omega)^n$ and $\cH_- = L^2(\Omega)$, we consider the closed, densely defined,
and nonnegative form $\fa$ with $\Dom[\fa] := H_0^1(\Omega)^n \oplus L^2(\Omega)$ and
\begin{equation*}
  \fa[ v \oplus q , u \oplus p ]
  :=
  \nu \sum_{j=1}^n \int_\Omega \scprod{ \partial_j v(x) , \partial_j u(x) }_{\CC^n} \,\dd x
\end{equation*}
for $u \oplus p, v \oplus q \in \Dom[\fa]$. Clearly, $\fa$ is the form associated to the nonnegative self-adjoint operator
$A := -\nu\mathbf{\Delta} \oplus 0$ on the Hilbert space $\cH = \cH_+ \oplus \cH_-$ with
$\Dom(A) := (H^2(\Omega) \cap H_0^1(\Omega))^n \oplus L^2(\Omega)$ and $\Dom(\abs{A}^{1/2}) = \Dom[\fa]$, where
$\mathbf{\Delta} = \Delta\cdot I_{\CC^n}$ is the vector-valued Dirichlet Laplacian on $\Omega$. Moreover,
$P_+ := \EE_A((0,\infty))$ and $P_- := \EE_A((-\infty,0]) = \EE_A(\{0\})$ are the orthogonal projections onto $\cH_+$ and
$\cH_-$, respectively. In particular, we have
\begin{equation*}
  \fD_+
  :=
  \Ran P_+ \cap \Dom(\abs{A}^{1/2})
  =
  H_0^1(\Omega)^n \oplus 0
\end{equation*}
and
\begin{equation*}
  \fD_-
  :=
  \Ran P_- \cap \Dom(\abs{A}^{1/2})
  =
  0 \oplus L^2(\Omega)
  .
\end{equation*}

Define the symmetric sesquilinear form $\fv$ on $\cH = \cH_+ \oplus \cH_-$ with domain $\Dom[\fv] := \Dom[\fa]$ by
\begin{equation*}
  \fv[ v \oplus q , u \oplus p ]
  :=
  -v_* \scprod{ \divgc v , p }_{L^2(\Omega)} - v_* \scprod{ q , \divgc u }_{L^2(\Omega)}
\end{equation*}
for $u \oplus p, v \oplus q \in \Dom[\fa]$. One can show that
$\nu\norm{\divgc u}_{L^2(\Omega)}^2 \le \fa[u \oplus 0 , u \oplus 0]$ for all $u \in \fD_+ = H_0^1(\Omega)^n$, see,
e.g.,~\cite[Proof of Theorem~5.12]{SchmDiss}. Using Young's inequality, this then implies that $\fv$ is infinitesimally form
bounded with respect to $\fa$, see~\cite[Remark~5.1.3]{SchmDiss}; cf.~also~\cite[Section~2]{GKMSV19}. Indeed, for $\eps > 0$ and
$f = u \oplus p \in \Dom[\fa]$ we obtain
\begin{equation}\label{eq:StokesRelBound}
 \begin{aligned}
  \abs{\fv[f , f]}
  &\le
  2 v_* \abs{ \scprod{ p , \divgc u }_{L^2(\Omega)} }
    \le
    2 v_* \norm{p}_{L^2(\Omega)} \norm{\divgc u}_{L^2(\Omega)}\\
  &\le
  \eps \nu \norm{\divgc u}_{L^2(\Omega)}^2 + \eps^{-1} \nu^{-1} v_*^2 \norm{p}_{L^2(\Omega)}^2\\
  &\le
  \eps \fa[ u \oplus 0 , u \oplus 0 ] + \eps^{-1} \nu^{-1} v_*^2 \norm{f}_\cH^2\\
  &=
  \eps \fa[ f , f ] + \eps^{-1} \nu^{-1} v_*^2 \norm{f}_\cH^2
  .
 \end{aligned}
\end{equation}
Thus, by the well-known KLMN theorem, the form $\fb_S := \fa + \fv$ with $\Dom[\fb_S] = \Dom[\fa] = \Dom(\abs{A}^{1/2})$ is
associated to a unique lower semibounded self-adjoint operator $B_S$ on $\cH$ with $\Dom(\abs{B_S}^{1/2}) = \Dom(\abs{A}^{1/2})$,
the so-called~\emph{Stokes operator}. It is a self-adjoint extension of the (non-closed) upper dominant block operator matrix
\begin{equation*}
  \begin{pmatrix}
    -\nu\mathbf{\Delta} & v_*\grad\\
    -v_*\divgc & 0
  \end{pmatrix}
\end{equation*}
defined on $(H^2(\Omega) \cap H_0^1(\Omega))^n \oplus H^1(\Omega)$. In fact, the closure of the latter is a self-adjoint
operator, see~\cite[Theorems~3.7 and~3.9]{FFMM00}, which yields another characterization of the Stokes operator $B_S$.

By rescaling, one obtains from~\cite[Theorem~3.15]{FFMM00} that the essential spectrum of $B_S$ is given by
\begin{equation*}
  \spec_\ess(B_S)
  =
  \Bigl\{ -\frac{v_*^2}{\nu} , -\frac{v_*^2}{2\nu}  \Bigr\}
  ,
\end{equation*}
cf.~\cite[Remark~2.2]{GKMSV19}. In particular, the essential spectrum of $B_S$ is purely negative. In turn, the positive spectrum
of $B_S$, that is, $\spec(B_S) \cap (0,\infty)$, is discrete~\cite[Theorem~2.1\,(i)]{GKMSV19}.

The above shows that the hypotheses of Theorem~\ref{thm:offdiagForm} are satisfied in this situation, so that we obtain from
Theorem~\ref{thm:offdiagForm} and Corollary~\ref{cor:offForm} the following result.

\begin{proposition}\label{prop:Stokes}
  Let $B_S$ be the Stokes operator as above. Then, the positive spectrum of $B_S$, $\spec(B_S) \cap (0,\infty)$, is discrete, and
  the positive eigenvalues $\lambda_k(B_S|_{\Ran\EE_{B_S}((0,\infty))})$, $k \in \NN$, of $B_S$, enumerated in nondecreasing
  order and counting multiplicities, admit the representation
  \begin{equation*}
    \lambda_k(B_S|_{\Ran\EE_{B_S}((0,\infty))})
    =
    \inf_{\substack{\fM_+ \subset H_0^1(\Omega)^n\\ \dim\fM_+ = k}}
      \sup_{\substack{u \oplus p \in \fM_+ \oplus L^2(\Omega)\\ \norm{u}_{L^2(\Omega)^n}^2 + \norm{p}_{L^2(\Omega)}^2 = 1}}
      \fb_S[ u \oplus p , u \oplus p ]
    .
  \end{equation*}
  The latter depend locally Lipschitz continuously on $\nu$ and $v_*$ and satisfy the two-sided estimate
  \begin{equation*}
    \nu\lambda_k(-\mathbf{\Delta})
    \le
    \lambda_k(B_S|_{\Ran\EE_{B_S}((0,\infty))})
    \le
    \nu\lambda_k(-\mathbf{\Delta}) + \frac{v_*^2}{\nu}
    .
  \end{equation*}
\end{proposition}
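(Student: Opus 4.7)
The four assertions will be handled separately, though most follow directly from the setup developed for the Stokes operator. Discreteness of $\spec(B_S)\cap(0,\infty)$ has already been recorded in the paragraph preceding the statement.

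For the minimax representation, the plan is to verify the hypotheses of Theorem~\ref{thm:offdiagForm} with $A = -\nu\mathbf{\Delta}\oplus 0$ and $B = B_S$. Off-diagonality of $\fv$ with respect to $\Ran P_+\oplus\Ran P_- = \cH_+\oplus\cH_-$ is immediate from the definition, the relative form bound~\eqref{eq:StokesRelBound} with, say, $\eps\in(0,1)$ supplies inequality~\eqref{eq:formRelBound} with $b<1$, and semiboundedness of $B_S$ is guaranteed by the KLMN theorem. Inserting the explicit identifications $\fD_+ = H_0^1(\Omega)^n\oplus 0$ and $\fD_- = 0\oplus L^2(\Omega)$ into Theorem~\ref{thm:offdiagForm} then yields the claimed representation.

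For the two-sided estimate, the lower bound $\nu\lambda_k(-\mathbf{\Delta})\le\lambda_k(B_S|_{\Ran Q_+})$ is Corollary~\ref{cor:offForm}(a) applied to the present $A$, using that $A|_{\Ran P_+} = -\nu\mathbf{\Delta}$ on $\cH_+$ has standard Courant minimax values $\nu\lambda_k(-\mathbf{\Delta})$. For the upper bound, I would insert into the minimax formula the test subspace $\fM_+\subset H_0^1(\Omega)^n\oplus 0$ spanned by $k$ vector Dirichlet-Laplace eigenfunctions realizing the first $k$ eigenvalues, and then estimate the resulting supremum over $u\oplus p$ with $\norm{u}^2+\norm{p}^2=1$ by optimizing the cross term $-2v_*\mathrm{Re}\scprod{\divgc u , p}$ in $p$ via Young's inequality, combined with $\norm{\divgc u}_{L^2}\le\norm{\nabla u}_{L^2}$ (cited for the Stokes setting above) and $\norm{\nabla u}_{L^2}^2 \le \lambda_k(-\mathbf{\Delta})\norm{u}_{L^2}^2$. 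The main technical difficulty here is extracting the sharp constant $v_*^2/\nu$ rather than the looser constant $2v_*\sqrt{\lambda_k(-\mathbf{\Delta})}$ that a naive use of the relative form bound in the spirit of Remark~\ref{rem:offForm}(1) would produce, so a careful choice of the Young split tailored to the specific test vectors will be required.

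Local Lipschitz continuity in $(\nu,v_*)$ is argued in two parts. For fixed $\nu$, dependence on $v_*$ is covered by Corollary~\ref{cor:offForm}(b) applied to the $v_*=1$ version of $\fv$, which satisfies the hypotheses of Theorem~\ref{thm:offdiagForm} by the same argument as above. For fixed $v_*$, dependence on $\nu$ is handled by noting that $\fa[f,f] = \nu\norm{\nabla u}_{L^2}^2$ is linear in $\nu$, so $\fb_S$ at different values of $\nu$ differs from $\fb_S$ at a reference value by a multiple of $\fa$; the relative form bound then gives two-sided comparisons of $\fb_S$ across nearby $\nu$, which feed into the minimax representation to yield a Lipschitz estimate analogous to~\eqref{eq:Lipschitz}. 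Joint Lipschitz continuity in $(\nu,v_*)$ follows by combining the two one-variable bounds along line segments in parameter space.
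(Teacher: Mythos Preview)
Your proposal is correct and follows the paper's route for three of the four parts: discreteness is quoted, the minimax representation comes from Theorem~\ref{thm:offdiagForm}, and the lower bound from Corollary~\ref{cor:offForm}(a), exactly as in the paper. For Lipschitz continuity the paper simply invokes Corollary~\ref{cor:offForm} without further comment; your two-step argument (in $v_*$ via Corollary~\ref{cor:offForm}(b), in $\nu$ by comparing $\fb_S$ at nearby values through the linearity of $\fa$ in $\nu$) makes explicit what the paper leaves implicit.

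For the upper bound your approach and the paper's differ slightly. You fix $\fM_+$ to be the span of the first $k$ Dirichlet eigenfunctions and then estimate the supremum; with that choice the fixed Young parameter $\nu\lambda_k(-\mathbf{\Delta})$ already does the job, since
\[
  2v_*\norm{p}\norm{\divgc u}
  \le
  \nu\lambda_k(-\mathbf{\Delta})\norm{p}^2+\frac{v_*^2}{\nu\lambda_k(-\mathbf{\Delta})}\norm{\divgc u}^2
  \le
  \nu\lambda_k(-\mathbf{\Delta})\norm{p}^2+\frac{v_*^2}{\nu}\norm{u}^2,
\]
giving $\fb_S[u\oplus p,u\oplus p]\le\nu\lambda_k(-\mathbf{\Delta})+v_*^2/\nu$ via $\norm{u}^2+\norm{p}^2=1$. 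The paper instead works with an \emph{arbitrary} $\fM_+$ and, for each normalized $u\oplus p$ with $u\neq 0$, applies Young with the $u$-dependent parameter $\mu:=\fa[u\oplus 0,u\oplus 0]/\norm{u}^2$, obtaining $\fb_S[u\oplus p,u\oplus p]\le\mu+v_*^2/\nu$; only afterwards does the infimum over $\fM_+$ produce $\nu\lambda_k(-\mathbf{\Delta})$. The paper's argument thus shows the stronger statement that the inner supremum is at most $\sup_{v\in\fM_+,\,\norm{v}=1}\fa[v\oplus 0,v\oplus 0]+v_*^2/\nu$ for every $\fM_+$, while your version is more concrete and avoids the $u$-dependent split. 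Both reach the same bound.
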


\begin{proof}
  In view the above considerations, the representation of the eigenvalues follows from Theorem~\ref{thm:offdiagForm}, and the
  lower bound on the eigenvalues follows from Corollary~\ref{cor:offForm}\,(a). Moreover, by rescaling, the continuity statement is a consequence of
  Corollary~\ref{cor:offForm}\,(b). It remains to show the upper bound on the eigenvalues. To this end, let
  $\fM_+ \subset H_0^1(\Omega)^n$ with $\dim \fM_+ = k \in \NN$, and let $f = u \oplus p \in \fM_+ \oplus L^2(\Omega)$ be a
  normalized vector with $u \neq 0$. Then,
  $\mu := \fa[ u \oplus 0 , u \oplus 0 ] / \norm{u}_{L^2(\Omega)^n}^2 = \fa[ f , f ] / \norm{u}_{L^2(\Omega)^n}^2$ is positive
  and satisfies
  \begin{equation}\label{eq:muBound}
    \mu
    \le
    \sup_{\substack{v \in \fM_+\\ \norm{v}_{L^2(\Omega)^n}^2 = 1}} \fa[ v \oplus 0 , v \oplus 0 ]
  \end{equation}
  and
  \begin{equation*}
    \frac{\nu\norm{\divgc u}_{L^2(\Omega)}^2}{\mu}
    =
    \frac{\norm{u}_{L^2(\Omega)^n}^2 \nu \norm{\divgc u}_{L^2(\Omega)}^2}{\fa[ u \oplus 0 , u \oplus 0 ]}
    \le
    \norm{u}_{L^2(\Omega)^n}^2
    \le
    1
    .
  \end{equation*}
  Similarly as in~\eqref{eq:StokesRelBound}, we now obtain by means of Young's inequality that
  \begin{align*}
    \abs{ \fv[ f , f ] }
    &\le
    2v_*\norm{p}_{L^2(\Omega)} \norm{\divgc u}_{L^2(\Omega)}
      \le
      \mu \norm{p}_{L^2(\Omega)}^2 + \frac{v_*^2 \norm{\divgc u}_{L^2(\Omega)}^2}{\mu}\\
    &\le
    \mu \norm{p}_{L^2(\Omega)}^2 + \frac{v_*^2}{\nu}
    .
  \end{align*}
  Since $\fa[ f , f ] = \mu\norm{u}_{L^2(\Omega)^n}^2$, this gives
  \begin{equation}\label{eq:StokesFormUpperBound}
    \fb_S[ f , f ]
    \le
    \fa[ f , f ] + \mu \norm{p}_{L^2(\Omega)}^2 + \frac{v_*^2}{\nu}
    =
    \mu + \frac{v_*^2}{\nu}
    .
  \end{equation}
  In light of $\fb_S[ 0 \oplus p , 0 \oplus p ] = \fa[ 0 \oplus p , 0 \oplus p ] = 0$, we conclude from~\eqref{eq:muBound}
  and~\eqref{eq:StokesFormUpperBound} that
  \begin{equation*}
    \sup_{\substack{u \oplus p \in \fM_+ \oplus L^2(\Omega)\\ \norm{u}_{L^2(\Omega)^2}^2 + \norm{p}_{L^2(\Omega)}^2 = 1}}
      \fb_S[ u \oplus p , u \oplus p ]
    \le
    \sup_{\substack{v \in \fM_+\\ \norm{v}_{L^2(\Omega)^n}^2 = 1}} \fa[ v \oplus 0 , v \oplus 0 ] + \frac{v_*^2}{\nu}
    ,
  \end{equation*}
  and taking the infimum over subspaces $\fM_+ \subset H_0^1(\Omega)^n$ with $\dim\fM_+ = k$ proves the upper bound. This
  completes the proof.
\end{proof}%

\begin{remark}
  (1)
  Choosing $\eps = 1$ in~\eqref{eq:StokesRelBound}, the upper bound from Remark~\ref{rem:offForm}\,(1) reads
  \begin{equation*}
    \lambda_k(B_S|_{\Ran\EE_{B_S}((0,\infty))})
    \le
    2\nu\lambda_k(-\mathbf{\Delta}) + \frac{v_*^2}{\nu}
  \end{equation*}
  for all $k \in \NN$, while the choice $\eps = v_*$ in~\eqref{eq:StokesRelBound} leads to
  \begin{equation*}
    \lambda_k(B_S|_{\Ran\EE_{B_S}((0,\infty))})
    \le
    (1+v_*)\nu\lambda_k(-\mathbf{\Delta}) + \frac{v_*}{\nu}
  \end{equation*}
  for all $k \in \NN$.

  (2)
  For the particular case of $k = 1$, a similar upper bound has been established in the proof
  of~\cite[Theorem~2.1\,(i)]{GKMSV19}:
  \begin{equation*}
    \nu\lambda_1(-\Delta)
    \le
    \lambda_1(B_S|_{\Ran\EE_{B_S}((0,\infty))})
    \le
    \nu\lambda_1(-\Delta) + v_*\norm{\divgc u_0}_{L^2(\Omega)}
    ,
  \end{equation*}
  where $u_0 \in (H^2(\Omega) \cap H_0^1(\Omega))^n$ is a normalized eigenfunction for $-\mathbf{\Delta}$ corresponding to the
  first positive eigenvalue $\lambda_1(-\mathbf{\Delta}) = \lambda_1(-\Delta)$.
\end{remark}

\begin{remark}\label{rem:StokesMMSST}
  Since $B_S$ is lower semibounded, Proposition~\ref{prop:Stokes} can alternatively be proved via~\cite{GS99}, see
  Remark~\ref{rem:offdiagForm}\,(1). Moreover, since $A = -\nu\mathbf{\Delta} \oplus 0$ has a spectral gap to the right of $0$,
  the same is true with~\cite{MM15}, see Remark~\ref{rem:offdiagForm}\,(2). In fact,~\cite{SST20} gives for
  $\lambda_k = \lambda_k(B_S|_{\Ran\EE_{B_S}((0,\infty))})$, $k \in \NN$, with the same reasoning also the representation
  \begin{equation*}
    \lambda_k
    =
    \inf_{\substack{\fM_+ \subset (H^2(\Omega)\cap H_0^1(\Omega))^n\\ \dim\fM_+ = k}}
      \sup_{\substack{u \oplus p \in \fM_+ \oplus H^1(\Omega)\\ \norm{u}_{L^2(\Omega)^n}^2 + \norm{p}_{L^2(\Omega)}^2 = 1}}
      \langle u \oplus p , B_S(u \oplus p) \rangle
    .
  \end{equation*}
\end{remark}

\subsection*{Reducing to the off-diagonal framework}
Apart from the Stokes operator from the previous subsection, the consideration of off-diagonal perturbations in
Theorems~\ref{thm:offdiagOp} and~\ref{thm:offdiagForm} may seem a bit restrictive. However, such perturbations naturally appear
when the perturbation is decomposed into its diagonal and off-diagonal part. If the diagonal perturbation can then be handled
efficiently in a suitable way, the discussed off-diagonal framework can be applied to the remaining part of the perturbation. The
following result makes this precise in the setting of operator perturbations.

\begin{proposition}\label{prop:redOffDiag}
  Assume Hypothesis~\ref{hyp:minimax}. Suppose, in addition, that $B$ is of the form $B = A + V$, $\Dom(B) = \Dom(A)$, with some
  symmetric $A$-bounded operator $V$. Denote $A_\pm := A|_{\Ran P_\pm}$, and decompose $V|_{\Dom(A)}$ as
  $V|_{\Dom(A)} = V_{\mathrm{diag}} + V_{\mathrm{off}}$, where $V_{\mathrm{diag}} = V_+ \oplus V_-$ is the diagonal part of
  $V|_{\Dom(A)}$ and $V_{\mathrm{off}}$ is the off-diagonal part of $V|_{\Dom(A)}$ with respect to the decomposition
  $\Ran P_+ \oplus \Ran P_-$. Suppose that the following hold:
  \begin{enumerate}
    \renewcommand{\theenumi}{\roman{enumi}}

    \item
    $A+V_{\mathrm{diag}}$ is self-adjoint on $\Dom(A+V_{\mathrm{diag}}) = \Dom(A)$,

    \item
    $V_{\mathrm{off}}$ is $(A+V_{\mathrm{diag}})$-bounded with $(A+V_{\mathrm{diag}})$-bound smaller than $1$,

    \item
    $\sup\spec(A_- + V_-) \leq \inf\spec(A_+ + V_+)$, and

    \item
    $\Ker(A_+ + V_+ - \mu) = \{ 0 \}$ with $\mu := \sup\spec(A_- + V_-)$.

  \end{enumerate}
  Then, setting $Q := \EE_B((\mu,\infty))$, one has $\dim\Ran P_+ = \dim\Ran Q$ and
  \begin{equation*}
    \lambda_k(B|_{\Ran Q})
    =
    \inf_{\substack{\fM_+\subset\cD_+\\ \dim\fM_+=k}} \sup_{\substack{x\in\fM_+ \oplus \cD_-\\ \norm{x}=1}} \scprod{ x , Bx }
    =
    \inf_{\substack{\fM_+\subset\fD_+\\ \dim\fM_+=k}} \sup_{\substack{x\in\fM_+ \oplus \fD_-\\ \norm{x}=1}} \fb[x,x]
  \end{equation*}
  for all $k \in \NN$ with $k \le \dim\Ran Q$.
\end{proposition}

\begin{proof}
  By (iii) and (iv) we obviously have
  \begin{equation*}
    P_+ = \EE_{A+V_{\text{diag}}}\bigl( (\mu,\infty) \bigr)
    \quad\text{ and }\quad
    P_- = \EE_{A+V_{\text{diag}}}\bigl( (-\infty,\mu] \bigr)
    .
  \end{equation*}
  Upon a spectral shift by $\mu$, the claim now follows from Theorem~\ref{thm:offdiagOp} with $A$, $V$, and $Q_+$ replaced by
  $A + V_{\text{diag}}$, $V_{\text{off}}$, and $Q$, respectively.
\end{proof}%

\begin{remark}\label{rem:gap}
  Conditions (iii) and (iv) in Proposition~\ref{prop:redOffDiag} are satisfied if
  $\mu < \nu := \inf\spec(A_+ + V_+)$ holds. In this case, the interval $(\mu , \nu)$ belongs to the resolvent set of the
  operator $A + V_{\text{diag}}$, and by~\cite[Theorem~1]{MS06} (cf.~also~\cite[Theorem~2.1]{AL95}) to the one of
  $A + V = (A + V_{\text{diag}}) + V_{\text{off}}$ as well. In particular, we have
  $\EE_{A+V}((\mu,\infty)) = \EE_{A+V}([\nu,\infty))$. This is the situation encountered in the alternative proof of
  Proposition~\ref{prop:boundedPert}\,(b) and the proof of Corollary~\ref{cor:unboundedPert} below. However, the conclusions can
  then alternatively be obtained also via~\cite{DES00b,MM15}, cf.~Remark~\ref{rem:offdiagForm}\,(2).
\end{remark}

Since conditions~(i) and (ii) in Proposition~\ref{prop:redOffDiag} are clearly satisfied if $V$ is bounded, the above provides an
alternative way to prove part~(b) of Proposition~\ref{prop:boundedPert}:

\begin{proof}[Alternative proof of Proposition~\ref{prop:boundedPert}\,(b)]
  By spectral shift we may assume without loss of generality that $c < 0 < d$. We then have $P = P_+$ with $P_+$ as in
  Hypothesis~\ref{hyp:minimax}. Let $A_\pm$ and $V_{\text{diag}}$ be defined as in Proposition~\ref{prop:redOffDiag}, and for
  $\bullet \in \{ p , n \}$ denote by $V_{\text{diag}}^{(\bullet)} = V_+^{(\bullet)} \oplus V_-^{(\bullet)}$ the diagonal part of
  $V^{(\bullet)}$. Clearly, we have $V_\pm^{(\bullet)} \ge 0$ and $\norm{V_\pm^{(\bullet)}} \le \norm{V^{(\bullet)}}$, and
  $V_{\text{diag}}$ decomposes as
  $V_{\text{diag}} = V_{\text{diag}}^{(p)} - V_{\text{diag}}^{(n)} = (V_+^{(p)} - V_+^{(n)}) \oplus (V_-^{(p)} - V_-^{(n)})$.
  Now,
  \begin{equation*}
    A_- + V_-^{(p)} - V_-^{(n)}
    \le
    c + \norm{V^{(p)}}
    \quad\text{ and }\quad
    d - \norm{V^{(n)}}
    \le
    A_+ + V_+^{(p)} - V_+^{(n)}
  \end{equation*}
  in the sense of quadratic forms. In light of $c + \norm{V^{(p)}} < d - \norm{V^{(n)}}$ and Remark~\ref{rem:gap}, applying
  Proposition~\ref{prop:redOffDiag} proves the claim.
\end{proof}%

It is worth to note that an analogous reasoning for part~(c) of Proposition~\ref{prop:boundedPert} suffers from similar obstacles
as the alternative proof based on~\cite{DES00b,MM15} mentioned in Remark~\ref{rem:boundedPert}\,(2).

If $V$ is not bounded, conditions~(i) and (ii) in Proposition~\ref{prop:redOffDiag} can still be guaranteed via the well-known
Kato-Rellich theorem by means of a sufficiently small $A$-bound of $V$, as the following lemma shows.

\begin{lemma}\label{lem:partsRelBound}
  In the situation of Proposition~\ref{prop:redOffDiag}, let $V$ have $A$-bound smaller than $1/2$. Then $V_{\mathrm{diag}}$ and
  $V_{\mathrm{off}}$ both have $A$-bound smaller than $1/2$, and $V_{\mathrm{off}}$ has $(A+V_{\mathrm{diag}})$-bound smaller
  than $1$.
\end{lemma}

\begin{proof}
  By hypothesis, there are constants $a,b \ge 0$, $b < 1/2$ such that we have $\norm{ Vx } \le a\norm{ x } + b\norm{ Ax }$ for
  all $x \in \Dom(A)$. Using Young's inequality, this gives for every $\eps > 0$ that
  \begin{equation*}
    \norm{ Vx }^2
    \le
    \bigl( a\norm{x} + b\norm{Ax} \bigr)^2
    \le
    a^2\Bigl( 1 + \frac{1}{\eps} \Bigr)\norm{x}^2 + b^2(1+\eps)\norm{Ax}^2
  \end{equation*}
  for all $x \in \Dom(A)$; cf.~\cite[Section~V.4.1]{Kato95}. Since $AP_\pm x = P_\pm Ax$ for all $x \in \Dom(A)$ and the ranges
  of $P_\pm$ are orthogonal, this implies that
  \begin{align*}
    \norm{V_{\text{diag}}x}^2
    &=
    \norm{P_+VP_+x}^2 + \norm{P_-VP_-x}^2\\
    &\le
    a^2\Bigl( 1 + \frac{1}{\eps} \Bigr) \norm{x}^2 + b^2(1+\eps) \norm{Ax}^2
  \end{align*}
  for all $x \in \Dom(A)$. We choose $\eps > 0$ such that $\beta := b(1+\eps)^{1/2} < 1/2$ and set $\alpha := a(1+1/\eps)^{1/2}$.
  It then follows from the above that
  \begin{equation*}
    \norm{V_{\text{diag}}x}
    \le
    \alpha \norm{x} + \beta \norm{Ax}
    \quad\text{ for all }\
    x \in \Dom(A)
  \end{equation*}
  and analogously the same for $V_{\text{off}}$. This shows that $V_{\text{diag}}$ and $V_{\text{off}}$ indeed have $A$-bound
  smaller than $1/2$.

  Using standard arguments as, for instance, in~\cite[Lemma~2.1.6]{Tre08}, we obtain
  \begin{equation*}
    \norm{V_{\text{diag}}x}
    \le
    \frac{\alpha}{1-\beta}\norm{x} + \frac{\beta}{1-\beta}\norm{(A + V_{\text{diag}})x}
    \quad\text{ for all }\
    x \in \Dom(A)
  \end{equation*}
  and, in turn,
  \begin{align*}
    \norm{V_{\text{off}}x}
    &\le
    \alpha\norm{x} + \beta\norm{Ax}
      \le
      \alpha\norm{x} + \beta\norm{(A+V_{\text{diag}})x} + \beta\norm{V_{\text{diag}}x}\\
    &\le
    \Bigl( \alpha + \frac{\alpha\beta}{1-\beta} \Bigr)\norm{x} + \Bigl( \beta + \frac{\beta^2}{1-\beta} \Bigr)
      \norm{(A + V_{\text{diag}})x}\\
    &=
    \frac{\alpha}{1-\beta}\norm{x} + \frac{\beta}{1-\beta}\norm{(A + V_{\text{diag}})x}
  \end{align*}
  for all $x \in \Dom(A + V_{\text{diag}}) = \Dom(A)$, where $\beta/(1-\beta) < 1$. This shows that $V_{\text{off}}$ has
  $(A + V_{\text{diag}})$-bound smaller than $1$ and, hence, completes the proof.
\end{proof}%

A suitable smallness assumption on the perturbation may also be used to guarantee condition (iii) and (iv) in
Proposition~\ref{prop:redOffDiag} in the sense of Remark~\ref{rem:gap} if the unperturbed operator has a gap in the spectrum.
This is demonstrated in the following corollary to the above.

\begin{corollary}\label{cor:unboundedPert}
  Let $A$, $(c,d)$, and $\cD_\pm$ be as in Proposition~\ref{prop:boundedPert}, and let $V$ be a symmetric operator that is
  $A$-bounded with $A$-bound smaller than $1/2$. Suppose, in addition, that $c < 0 < d$, and define $A_\pm$ and $V_\pm$ as in
  Proposition~\ref{prop:redOffDiag}. Suppose that there are constants $a_\pm, b_\pm \ge 0$, $b_\pm < 1$, with
  \begin{equation}\label{eq:relBounds}
    \abs{ \scprod{ x , V_\pm x } }
    \le
    a_\pm \norm{x}^2 \pm b_\pm \scprod{ x , A_\pm x }
    \quad\text{ for all }\
    x \in \cD_\pm
  \end{equation}
  and
  \begin{equation}\label{eq:unboundedPert}
    a_+ + a_- + b_+d - b_-c < d - c.
  \end{equation}
  Then, the interval $(a_- + (1-b_-)c , (1-b_+)d-a_+)$ belongs to the resolvent set of $A+V$, and one has
  $\dim \Ran \EE_A( [d,\infty) ) = \dim \EE_{A+V}( [(1-b_+)d-a_+,\infty) )$ and
  \begin{equation*}
    \lambda_k((A+V)|_{\Ran\EE_{A+V}([(1-b_+)d-a_+,\infty))})
    =
    \inf_{\substack{\fM_+ \subset \cD_+\\ \dim\fM_+ = k}} \sup_{\substack{x\in\fM_+\oplus\cD_-\\ \norm{x}=1}}
      \scprod{ x , (A+V)x }
  \end{equation*}
  for all $k \in \NN$ with $k \le \dim\Ran \EE_{A+V}([(1-b_+)d-a_+,\infty))$.
\end{corollary}

\begin{proof}
  By Lemma~\ref{lem:partsRelBound} and the Kato-Rellich theorem, conditions~(i) and~(ii) in Proposition~\ref{prop:redOffDiag} are
  satisfied. By~\eqref{eq:relBounds} we have
  \begin{equation*}
    A_- + V_-
    \le
    a_- + (1-b_-)c
    \quad\text{ and }\quad
    (1-b_+)d - a_+
    \le
    A_+ + V_+
  \end{equation*}
  in the sense of quadratic forms. Since $a_- + (1-b_-)c < (1-b_+)d - a_+$ by~\eqref{eq:unboundedPert}, the claim now again
  follows from Proposition~\ref{prop:redOffDiag} and Remark~\ref{rem:gap}.
\end{proof}%

\begin{remark}\label{rem:unboundedPert}
  (1)
  It is easy to see that the left-hand side of~\eqref{eq:unboundedPert} is invariant under a spectral shift in $A$. In this
  respect, an analogous statement as in Corollary~\ref{cor:unboundedPert} holds for arbitrary spectral gaps $(c,d)$, not just
  ones satisfying $c < 0 < d$.
   
  (2)
  It follows from Lemma~\ref{lem:partsRelBound} that $V_\pm$ is $A_\pm$-bounded with $A_\pm$-bound smaller than $1/2$. In turn,
  since both $A_\pm$ are semibounded, constants $a_\pm,b_\pm \ge 0$, $b_\pm < 1/2$, satisfying~\eqref{eq:relBounds} always exist
  by~\cite[Theorem~VI.1.38]{Kato95}. Condition~\eqref{eq:unboundedPert} can then be guaranteed for $tV$ instead of $V$ for
  $t \in \RR$ with sufficiently small modulus.

  (3)
  As indicated in Remark~\ref{rem:gap}, Corollary~\ref{cor:unboundedPert} can also be proved via~\cite{DES00b,MM15} by
  verifying the operator analogue to~\eqref{eq:MM}. In fact, that approach even allows to weaken the assumption on the $A$-bound
  of $V$ from being smaller than $1/2$ to merely being smaller than $1$ since it is then not necessary to have that
  $V_{\text{off}}$ has $(A+V_{\text{diag}})$-bound smaller than $1$.
\end{remark}

\section{An abstract minimax principle in spectral gaps}\label{sec:abstrMinimax}

We rely on the following abstract minimax principle in spectral gaps, part~(a) of which is extracted from~\cite{GLS99} and
part~(b) of which is its natural adaptation to the operator framework; cf.~also~\cite[Proposition~A.3]{NSTTV20}. For the
convenience of the reader, its proof is reproduced in Appendix~\ref{sec:GLS} below.

\begin{proposition}\label{prop:GLS}
  Assume Hypothesis \ref{hyp:minimax}.
  \begin{enumerate}
    \renewcommand{\theenumi}{\alph{enumi}}

    \item
    If we have $\Dom(\abs{B}^{1/2}) = \Dom(\abs{A}^{1/2})$, $\fb[x,x]\le0$ for all $x\in\fD_-$, and
    $\Ran(P_+Q_+|_{\fD_+}) \supset \fD_+$, then
    \begin{equation*}
      \lambda_k(B|_{\Ran Q_+})
      =
      \inf_{\substack{\fM_+\subset\fD_+\\ \dim(\fM_+)=k}} \sup_{\substack{x\in\fM_+\oplus\fD_-\\ \norm{x}=1}} \fb[x,x]
    \end{equation*}
    for all $k \in \NN$ with $k \le \dim\Ran P_+$.

    \item
    If we have $\Dom(B) = \Dom(A)$, $\scprod{ x , Bx } \le 0$ for all $x \in \cD_-$, and $\Ran(P_+Q_+|_{\cD_+}) \supset \cD_+$,
    then
    \begin{equation*}
      \lambda_k(B|_{\Ran Q_+})
      =
      \inf_{\substack{\fM_+\subset\cD_+\\ \dim(\fM_+)=k}} \sup_{\substack{x\in\fM_+\oplus\cD_-\\ \norm{x}=1}}
        \scprod{ x , Bx }
    \end{equation*}
    for all $k \in \NN$ with $k \le \dim\Ran P_+$.

  \end{enumerate}
\end{proposition}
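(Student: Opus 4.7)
The plan is to reduce part~(a) to the abstract minimax principle of Griesemer, Lewis, and Siedentop~\cite[Theorem~1]{GLS99} and to obtain part~(b) by running the same argument at the operator level. The first task is to verify that the hypothesis $\Ran(P_+Q_+|_{\fD_+}) \supset \fD_+$ implies the surjectivity condition underlying~\cite{GLS99}: since $\fD_+ \subset \Dom(\abs{A}^{1/2}) = \Dom(\abs{B}^{1/2})$ and the latter form domain is invariant under $Q_+$, the vectors $Q_+x$ for $x\in\fD_+$ all lie in $\fD_+^B := \Ran Q_+ \cap \Dom(\abs{B}^{1/2})$, so that
\begin{equation*}
  P_+\fD_+^B \supset P_+Q_+\fD_+ \supset \fD_+.
\end{equation*}
In other words, the restriction $T := P_+|_{\fD_+^B}:\fD_+^B \to \fD_+$ is surjective, which together with $\fb[x,x]\le 0$ on $\fD_-$ matches the hypothesis of~\cite[Theorem~1]{GLS99}.

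For the reader's convenience, and because the strategy shapes part~(b), I would still sketch the two inequalities. The easy direction $\lambda_k(B|_{\Ran Q_+}) \le \mu_k$ (where $\mu_k$ denotes the right-hand side) proceeds by a graph lifting: given $\fM_+\subset\fD_+$ of dimension $k$ with basis $u_1,\dots,u_k$, the hypothesis provides $x_1,\dots,x_k\in\fD_+$ with $P_+Q_+x_j = u_j$, and $\tilde{\fM}_+ := \operatorname{span}\{Q_+x_j\}$ is a $k$-dimensional subspace of $\fD_+^B$ whose elements $Q_+x_j = u_j + P_-Q_+x_j$ have $P_-$-part in $\fD_-$, giving $\tilde{\fM}_+ \subset \fM_+ \oplus \fD_-$. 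The standard Courant minimax applied to the positive part $B|_{\Ran Q_+}$ combined with this inclusion then yields
\begin{equation*}
  \lambda_k(B|_{\Ran Q_+}) \le \sup_{y\in\tilde{\fM}_+,\,\norm{y}=1}\fb[y,y] \le \sup_{z\in\fM_+\oplus\fD_-,\,\norm{z}=1}\fb[z,z],
\end{equation*}
and taking the infimum over $\fM_+$ closes this direction.

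The opposite inequality $\mu_k \le \lambda_k(B|_{\Ran Q_+})$ is the main technical step and is where the sign hypothesis on $\fb$ enters decisively. A preliminary observation is that $T$ is also injective: if $y\in\fD_+^B$ has $P_+y=0$, then $y\in\fD_-$ forces $\fb[y,y]\le 0$, while $y\in\fD_+^B$ gives $\fb[y,y]\ge 0$, hence $\fb[y,y]=0$; triviality of $\Ker(B|_{\Ran Q_+})$ (its spectrum lies in $(0,\infty)$) then yields $y=0$. Given arbitrary $\fN_+\subset\fD_+^B$ of dimension $k$, one takes $\fM_+ := P_+\fN_+\subset\fD_+$ of dimension $k$ by injectivity and shows
\begin{equation*}
  \sup_{z\in\fM_+\oplus\fD_-,\,\norm{z}=1}\fb[z,z] \le \sup_{y\in\fN_+,\,\norm{y}=1}\fb[y,y].
\end{equation*}
This is the main obstacle; it is carried out in~\cite[Theorem~1]{GLS99} by writing $z = y + \tilde{v}$ with $y\in\fN_+$, $\tilde{v}\in\fD_-$, decomposing $\tilde{v} = Q_+\tilde{v} + Q_-\tilde{v}$ via the spectral projections of $B$, and exploiting form-orthogonality of the $Q_\pm$-parts together with $\fb\le 0$ on $\Ran Q_- \cap \Dom(\abs{B}^{1/2})$ to absorb the negative contribution.

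For part~(b), the hypothesis $\Dom(B)=\Dom(A)$ makes the operator domain invariant under both $P_\pm$ and $Q_\pm$ (the former commute with $A$, the latter with $B$), so that $Q_+\cD_+ \subset \cD_+^B := \Ran Q_+ \cap \Dom(B)$ and the range condition lifts to $P_+\cD_+^B \supset \cD_+$. The graph lifting and the injectivity argument then carry through verbatim with $\cD_\pm$ replacing $\fD_\pm$ and with $\scprod{\cdot,B\cdot}$ replacing $\fb[\cdot,\cdot]$, the role of the form sign condition being played by $\scprod{x,Bx}\le 0$ on $\cD_-$. In the technical step, form-orthogonality of the $Q_\pm$-parts now reads $\scprod{Q_+x,BQ_-y} = 0$ for $x,y\in\Dom(B)$, and the rest of the argument proceeds along the lines of~\cite[Theorem~1]{GLS99} to deliver the operator-level identity.
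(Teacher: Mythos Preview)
Your proposal is correct and follows essentially the same route as the paper: both verify that the restriction $P_+|_{\Ran Q_+\cap\Dom(\abs{B}^{1/2})}$ (resp.\ its operator-domain analogue) is bijective---surjectivity from the range hypothesis $\Ran(P_+Q_+|_{\fD_+})\supset\fD_+$, injectivity from the sign condition combined with strict positivity of $B|_{\Ran Q_+}$---and then invoke Step~1 of the proof of~\cite[Theorem~1]{GLS99} for the minimax identity. The paper is simply terser, citing that step directly rather than sketching the two inequalities as you do.
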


\begin{remark}\label{rem:spectralShift}
  The above proposition is tailored towards spectral gaps to the right of $0$, but by a spectral shift we can of course handle
  also spectral gaps to the right of any point $\gamma \in \RR$. Indeed, we have
  $\EE_{A-\gamma}((0,\infty)) = \EE_A((\gamma,\infty))$ for $\gamma \in \RR$ and analogously for $B$. Moreover, the form
  associated to the operator $B - \gamma$ is known to agree with the form $\fb - \gamma$. The latter can be seen for instance
  with an analogous reasoning as in~\cite[Proposition~10.5\,(a)]{Schm12}; cf.~also Lemma~\ref{cor:formpert} in
  Appendix~\ref{sec:heinz} below.
\end{remark}

\begin{remark}\label{rem:neg}
  Since $P_+$ and $Q_+$ are spectral projections for the respective operators, we have $\Ran(P_+Q_+|_{\fD_+}) \subset \fD_+$ and
  $\Ran(P_+Q_+|_{\cD_+}) \subset \cD_+$. In this respect, the condition $\Ran(P_+Q_+|_{\fD_+}) \supset \fD_+$ in part~(a) of
  Proposition~\ref{prop:GLS} actually means that the restriction $P_+Q_+|_{\fD_+} \colon \fD_+ \to \fD_+$ is surjective. This has
  not been formulated explicitly in the statement of \cite[Theorem~1]{GLS99} but has instead been guaranteed by the stronger
  condition
  \begin{equation*}
    \norm{ (\abs{A}+I)^{1/2} P_+Q_- (\abs{A}+I)^{-1/2} }
    <
    1
    .
  \end{equation*}
  In fact, taking into account that $\fD_+ = \Ran ((\abs{A}+I)^{-1/2}|_{\Ran P_+})$, a standard Neumann series argument in the
  Hilbert space $\Ran P_+$ then even gives bijectivity of the restriction $P_+Q_+|_{\fD_+}$, see Step~2 of the proof
  of~\cite[Theorem~1]{GLS99}. In this reasoning, the operators $(\abs{A}+I)^{\pm 1/2}$ can be replaced by
  $(\abs{A}+\alpha I)^{\pm 1/2}$ for any $\alpha>0$; if $\abs{A}$ has a bounded inverse, also $\alpha=0$ can be considered here.
  
  Of course, the above reasoning also applies in the situation of part~(b) of Proposition~\ref{prop:GLS}, but with
  $(\abs{A}+\alpha I)^{\pm 1/2}$ replaced by $(\abs{A}+\alpha I)^{\pm 1}$.
\end{remark}

In the context of our main theorems, the restriction $P_+Q_+|_{\Ran P_+}$, understood as an endomorphism of $\Ran P_+$, will
always be bijective, cf.~Remark~\ref{rem:PQbij}\,(1) below. It turns out that then the hypotheses of part~(b) in
Proposition~\ref{prop:GLS} imply those of part~(a), in which case both representations for the minimax values in
Proposition~\ref{prop:GLS} are valid. More precisely, we have the following lemma, essentially based on the well-known Heinz
inequality, cf.~Appendix~\ref{sec:heinz} below.

\begin{lemma}\label{lem:GLS}
  Assume Hypothesis~\ref{hyp:minimax} with $\Dom(A) = \Dom(B)$.
  \begin{enumerate}
    \renewcommand{\theenumi}{\alph{enumi}}
    
    \item One has $\Dom(\abs{A}^{1/2}) = \Dom(\abs{B}^{1/2})$.

    \item If $\scprod{x , Bx} \le 0$ for all $x \in \cD_-$, then $\fb[x , x] \le 0$ for all $x \in \fD_-$.

    \item If the restriction $P_+Q_+|_{\Ran P_+} \colon \Ran P_+ \to \Ran P_+$ is bijective and
          $\Ran(P_+Q_+|_{\cD_+}) \supset \cD_+$, then also $\Ran(P_+Q_+|_{\fD_+}) \supset \fD_+$.

  \end{enumerate}
\end{lemma}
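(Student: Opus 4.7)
The plan is to address parts~(a)--(c) in order, using the Heinz inequality (cf.~Appendix~\ref{sec:heinz}) as a common device to transfer operator-level bounds to form-level bounds.

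For part~(a), I would observe that $B(\abs{A}+I)^{-1}$ is everywhere defined on the underlying Hilbert space (since $\Ran(\abs{A}+I)^{-1} = \Dom(A) = \Dom(B)$) and closed as the composition of a closed operator with a bounded one, hence bounded by the closed graph theorem. Since $\norm{Bx} = \norm{\abs{B}x}$, this rewrites to the statement that $(\abs{B}+I)(\abs{A}+I)^{-1}$ extends to a bounded operator on the whole space. An application of the Heinz inequality to the positive, boundedly invertible operators $\abs{A}+I$ and $\abs{B}+I$ then yields boundedness of $(\abs{B}+I)^{1/2}(\abs{A}+I)^{-1/2}$, which translates to $\Dom(\abs{A}^{1/2}) \subset \Dom(\abs{B}^{1/2})$. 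Swapping the roles of $A$ and $B$ gives the reverse inclusion.

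For part~(b), the argument in part~(a) in fact shows that the form norms of $A$ and $B$ are equivalent on the common form domain $\Dom(\abs{A}^{1/2}) = \Dom(\abs{B}^{1/2})$. Given $x \in \fD_-$, the spectral truncations $x_n := \EE_A([-n,0])x$ lie in $\cD_-$ and converge to $x$ in the form norm of $A$, and hence also in the form norm of $B$. Continuity of $\fb$ in the latter norm then yields
\begin{equation*}
  \fb[x,x] = \lim_{n\to\infty}\fb[x_n,x_n] = \lim_{n\to\infty}\scprod{x_n,Bx_n} \le 0,
\end{equation*}
where I have used that $x_n \in \cD_- \subset \Dom(B)$ together with the standing hypothesis on $\cD_-$.

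For part~(c), set $T := P_+Q_+|_{\Ran P_+}$ and $A_+ := A|_{\Ran P_+}$. By hypothesis $T$ is a bounded bijection on $\Ran P_+$ with bounded inverse $T^{-1}$, and the injectivity of $T$ together with $\Ran(T|_{\cD_+}) \supset \cD_+$ forces $T^{-1}(\cD_+) \subset \cD_+$. The closed graph theorem applied on $\cD_+ = \Dom(A_+)$ equipped with its graph norm (a Hilbert space since $A_+$ is self-adjoint) then shows that $T^{-1}$ is bounded also in this graph norm. Combined with boundedness of $T^{-1}$ on $\Ran P_+$, this yields that $(A_+ + I)T^{-1}(A_+ + I)^{-1}$ extends to a bounded operator on $\Ran P_+$. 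A final invocation of the Heinz inequality gives boundedness of $(A_+ + I)^{1/2}T^{-1}(A_+ + I)^{-1/2}$, whence $T^{-1}(\fD_+) \subset \fD_+$. For any $y \in \fD_+$ the preimage $x := T^{-1}y$ therefore lies in $\fD_+$, proving $\Ran(P_+Q_+|_{\fD_+}) \supset \fD_+$. The main technical point is precisely this interpolation step in part~(c), where boundedness has to be lifted from the two endpoint spaces $\Ran P_+$ and $\Dom(A_+)$ to the intermediate form space $\fD_+$, which is exactly what the Heinz-type estimate in Appendix~\ref{sec:heinz} is designed to accomplish.
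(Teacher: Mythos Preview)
Your proposal is correct and follows essentially the same approach as the paper: all three parts are handled via the Heinz inequality (Appendix~\ref{sec:heinz}), with part~(b) done by approximation using that $\cD_-$ is a core for $\abs{A}^{1/2}|_{\Ran P_-}$, and part~(c) by interpolating between $\Ran P_+$ and $\cD_+$ to reach $\fD_+$. The only cosmetic difference is in part~(c): the paper observes that in fact $\Ran(P_+Q_+|_{\cD_+}) = \cD_+$ (the missing inclusion being automatic since $P_+$ and $Q_+$ preserve $\Dom(A)=\Dom(B)$) and then applies Corollary~\ref{cor:fracBij} directly to $S = P_+Q_+|_{\Ran P_+}$, whereas you work with $T^{-1}$ and invoke the closed graph theorem explicitly to obtain the graph-norm bound before applying Heinz; this is precisely what the proof of Corollary~\ref{cor:fracBij} does internally, so the two arguments are the same up to packaging.
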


\begin{proof}
  (a).
  This is a consequence of the well-known Heinz inequality, see, e.g., Corollary~\ref{cor:SchmDiss} below. Alternatively, this
  follows by classical considerations regarding operator and form boundedness, see Remark~\ref{rem:SchmDiss} below.
  
  (b).
  It follows from part~(a) that the operator $\abs{B}^{1/2} ( \abs{A}^{1/2} + I )^{-1}$ is closed and everywhere defined, hence
  bounded by the closed graph theorem. Thus,
  \begin{equation*}
    \norm{\abs{B}^{1/2}x} \le \norm{\abs{B}^{1/2}(\abs{A}^{1/2}+I)^{-1}}\cdot \norm{(\abs{A}^{1/2}+I)x}
  \end{equation*}
  for all $x\in \Dom(\abs{A}^{1/2})=\Dom(\abs{B}^{1/2})$. Since $\cD_-$ is a core for the operator
  $\abs{A|_{\Ran P_-}}^{1/2}=\abs{A}^{1/2}|_{\Ran P_-}$ with $\Dom(\abs{A}^{1/2}|_{\Ran P_-})=\fD_-$, the inequality
  $\fb[x,x]\le 0$ for $x\in\fD_-$ now follows from the hypothesis $\scprod{ x,Bx } \le 0$ for all $x\in\cD_-$ by approximation.

  (c).
  We clearly have $\Ran(P_+Q_+|_{\cD_+})=\cD_+$, $\cD_+=\Dom(A|_{\Ran P_+})$, and $\fD_+=\Dom(\abs{A|_{\Ran P_+}}^{1/2})$.
  Applying Corollary~\ref{cor:fracBij} below with the choices $\Lambda_1=\Lambda_2=A|_{\Ran P_+}$ and $S = P_+Q_+|_{\Ran P_+}$
  therefore implies that $\Ran(P_+Q_+|_{\fD_+})=\fD_+$, which proves the claim.
\end{proof}%

\begin{remark}\label{rem:PQbij}
  (1)
  In light of the identity $P_+Q_+ = P_+ - P_+Q_-$, the bijectivity of $P_+Q_+|_{\Ran P_+} \colon \Ran P_+ \to \Ran P_+$ can be
  guaranteed, for instance, by the condition $\norm{P_+Q_-} < 1$ via a standard Neumann series argument. Since
  $P_+-Q_+=P_+Q_- - P_-Q_+$ and, in particular, $\norm{P_+Q_-}\le\norm{P_+-Q_+}$, this condition holds if the stronger inequality
  $\norm{P_+-Q_+}<1$ is satisfied. In the latter case, there also is a unitary operator $U$ with $Q_+U=UP_+$, see,
  e.g.,~\cite[Theorem~I.6.32]{Kato95}, so that automatically $\dim \Ran P_+=\dim\Ran Q_+$. It is this situation we encounter in
  Theorems~\ref{thm:genSemibounded}--\ref{thm:offdiagForm}.
 
  (2)
  In the case where $B$ is an infinitesimal operator perturbation of $A$, the inequality $\norm{P_+Q_-}<1$ already implies that
  $\Ran(P_+Q_+|_{\cD_+}) \supset \cD_+$, see the following section; the particular case where $B$ is a bounded perturbation of
  $A$ has previously been considered in~\cite[Lemma~A.6]{NSTTV20}. For more general, not necessarily infinitesimal,
  perturbations, this remains so far an open problem.
\end{remark}

\section{Proof of Theorem~\ref{thm:genOpInfinitesimal}: The graph norm approach}\label{sec:graphNorm}

In this section we show that the inequality $\norm{P_+Q_-} < 1$ in the context of Theorem~\ref{thm:genOpInfinitesimal} implies
that $\Ran(P_+Q_+|_{\cD_+}) \supset \cD_+$, which is essentially what is needed to deduce Theorem~\ref{thm:genOpInfinitesimal}
from Proposition~\ref{prop:GLS} and Lemma~\ref{lem:GLS}. The main technique used to accomplish this can in fact be formulated in
a much more general framework:

Recall that for a closed operator $\Lambda$ on a Banach space with norm $\norm{\,\cdot\,}$, its domain $\Dom(\Lambda)$ can be
equipped with the~\emph{graph norm}
\begin{equation*}
 \norm{x}_\Lambda := \norm{x} + \norm{\Lambda x},\quad x\in\Dom(\Lambda),
\end{equation*}
which makes $(\Dom(\Lambda),\norm{\,\cdot\,}_\Lambda)$ a Banach space. Also recall that a linear operator $K$ with
$\Dom(K) \supset \Dom(\Lambda)$ is called~\emph{$\Lambda$-bounded with $\Lambda$-bound $\beta_* \ge 0$} if for all
$\beta > \beta_*$ there is an $\alpha \ge 0$ with
\begin{equation}\label{eq:relBound}
  \norm{Kx}
  \le
  \alpha \norm{x} + \beta \norm{\Lambda x}
  \quad\text{ for all }\
  x \in \Dom(\Lambda)
\end{equation}
and if there is no such $\alpha$ for $0 < \beta < \beta_*$.

The following lemma extends part~(a) of~\cite[Proposition~A.5]{NSTTV20}, taken from Lemma~3.9 in the author's
Ph.D.~thesis~\cite{SeelDiss}, to relatively bounded commutators.

\begin{lemma}\label{lem:specRad}
  Let $\Lambda$ be a closed operator on a Banach space, $K$ be $\Lambda$-bounded with $\Lambda$-bound $\beta_* \ge 0$, and let
  $S$ be bounded with $\Ran(S|_{\Dom(\Lambda)})\subset\Dom(\Lambda)$ and
  \begin{equation*}
    \Lambda Sx - S\Lambda x
    =
    Kx
    \quad\text{ for all }\
    x\in\Dom(\Lambda)
    .
  \end{equation*}
  Then, the restriction $S|_{\Dom(\Lambda)}$ is bounded with respect to the graph norm for $\Lambda$, and the corresponding
  spectral radius $r_\Lambda(S) = \lim_{k\to\infty} \norm{(S|_{\Dom(\Lambda)})^k}_\Lambda^{1/k}$ satisfies
  \begin{equation*}
    r_\Lambda(S)
    \leq
    \norm{S} + \beta_*
    .
  \end{equation*}
\end{lemma}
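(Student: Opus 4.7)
The plan is to exploit the commutation identity once per factor of $S$ and to iterate, carefully separating the contribution that grows like $(\norm{S}+\beta)^k$ from a lower order term that grows only polynomially in $k$. Here $\beta>\beta_*$ is arbitrary and $\alpha=\alpha(\beta)\ge 0$ is chosen so that~\eqref{eq:relBound} holds.

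First I would record the one-step estimate. Given $x\in\Dom(\Lambda)$, the hypothesis $\Ran(S|_{\Dom(\Lambda)})\subset\Dom(\Lambda)$ ensures $Sx\in\Dom(\Lambda)$, and the commutator identity $\Lambda Sx = S\Lambda x + Kx$ combined with the relative bound for $K$ gives
\begin{equation*}
  \norm{\Lambda Sx}
  \le
  \norm{S}\norm{\Lambda x} + \alpha\norm{x} + \beta\norm{\Lambda x}.
\end{equation*}
Adding $\norm{Sx}\le \norm{S}\norm{x}$, this yields the key one-step inequality
\begin{equation*}
  \norm{Sx}_\Lambda
  \le
  (\norm{S}+\beta)\norm{x}_\Lambda + \alpha\norm{x},
\end{equation*}
which in particular shows that $S|_{\Dom(\Lambda)}$ is bounded on $(\Dom(\Lambda),\norm{\,\cdot\,}_\Lambda)$.

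The main step is the iteration. By induction, $S^k x\in\Dom(\Lambda)$ for every $k\in\N$, and applying the one-step inequality to $S^{k-1}x$ together with $\norm{S^{k-1}x}\le \norm{S}^{k-1}\norm{x}$ one obtains by a straightforward induction the estimate
\begin{equation*}
  \norm{S^k x}_\Lambda
  \le
  (\norm{S}+\beta)^k \norm{x}_\Lambda + C_k \norm{x},
  \qquad
  C_k \le k\alpha (\norm{S}+\beta)^{k-1}.
\end{equation*}
Using $\norm{x}\le\norm{x}_\Lambda$ on the right-hand side and taking the supremum over $x\in\Dom(\Lambda)$ with $\norm{x}_\Lambda=1$ gives
\begin{equation*}
  \norm{(S|_{\Dom(\Lambda)})^k}_\Lambda
  \le
  (\norm{S}+\beta)^{k-1}\bigl((\norm{S}+\beta) + k\alpha\bigr).
\end{equation*}
Taking the $k$-th root and letting $k\to\infty$, the polynomial factor $((\norm{S}+\beta)+k\alpha)^{1/k}$ tends to $1$, so
\begin{equation*}
  r_\Lambda(S)
  \le
  \norm{S}+\beta.
\end{equation*}
Since $\beta>\beta_*$ was arbitrary, this yields the claimed bound $r_\Lambda(S)\le \norm{S}+\beta_*$.

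The main obstacle is keeping the polynomial lower order term from spoiling the spectral radius estimate; this is handled by the fact that the recursion $C_{k+1}=(\norm{S}+\beta)C_k + \alpha\norm{S}^k$ leads to a bound of the form $C_k\le k\alpha(\norm{S}+\beta)^{k-1}$, which disappears after taking the $k$-th root. Note that no explicit ``Leibniz-type'' expansion $\Lambda S^k x = S^k\Lambda x + \sum_{j=0}^{k-1}S^j K S^{k-1-j}x$ is required for this argument, though it could be used as an alternative bookkeeping device.
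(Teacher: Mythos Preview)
Your proof is correct and follows essentially the same route as the paper: a one-step graph-norm estimate $\norm{Sx}_\Lambda \le (\norm{S}+\beta)\norm{x}_\Lambda + \alpha\norm{x}$, the same induction yielding $\norm{S^kx}_\Lambda \le (\norm{S}+\beta)^k\norm{x}_\Lambda + k\alpha(\norm{S}+\beta)^{k-1}\norm{x}$, and the same passage to the spectral radius via the $k$-th root. Your explicit recursion for $C_k$ and the closing remark about the Leibniz expansion are only presentational additions.
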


\begin{proof}
  Only small modifications to the reasoning from~\cite[Lemma~3.9]{SeelDiss},~\cite[Proposition~A.5]{NSTTV20} are necessary. For
  the sake of completeness, we reproduce the full argument here:
  
  Let $\beta > \beta_*$ and $\alpha \ge 0$ such that~\eqref{eq:relBound} holds. Then, for $x\in\Dom(\Lambda)$ one has
  \begin{equation*}
   \norm{\Lambda Sx} \le \norm{S}\norm{\Lambda x} + \norm{Kx} \le (\norm{S}+\beta)\norm{\Lambda x} + \alpha\norm{x},
  \end{equation*}
  so that
  \begin{equation*}
    \norm{Sx}_\Lambda = \norm{Sx} + \norm{\Lambda Sx} \le \bigl(\norm{S}+\beta\bigr)\norm{x}_\Lambda + \alpha\norm{x}.
  \end{equation*}
  In particular, $S|_{\Dom(\Lambda)}$ is bounded with respect to the graph norm $\norm{\,\cdot\,}_\Lambda$ with
  $\norm{S}_\Lambda\le \norm{S} + \beta + \alpha$.

  Now, a straightforward induction yields
  \begin{equation*}
    \norm{S^kx}_\Lambda
    \le
    \bigl(\norm{S} + \beta\bigr)^k\norm{x}_\Lambda + k\alpha\bigl(\norm{S}+\beta\bigr)^{k-1}\norm{x},
    \quad x\in\Dom(\Lambda),
  \end{equation*}
  for $k\in\NN$. Hence, $\norm{(S|_{\Dom(\Lambda)})^k}_\Lambda \le (\norm{S}+\beta)^k + k\alpha(\norm{S}+\beta)^{k-1}$, so that
  \begin{align*}
    r_\Lambda(S)
    &=
    \lim_{k\to\infty} \norm{(S|_{\Dom(\Lambda)})^k}_\Lambda^{1/k}
      \le
      \lim_{k\to\infty} \bigl( (\norm{S}+\beta)^k+k\alpha(\norm{S}+\beta)^{k-1} \bigr)^{1/k}\\
    &=
    \norm{S}+\beta
    .
  \end{align*}
  Since $\beta > \beta_*$ was chosen arbitrarily, this proves the claim.
\end{proof}%

We are now in position to prove Theorem~\ref{thm:genOpInfinitesimal}.

\begin{proof}[Proof of Theorem~\ref{thm:genOpInfinitesimal}]
  We mainly follow the line of reasoning in the proof of~\cite[Lemma~A.6]{NSTTV20}. Only a few additional considerations are
  necessary in order to accommodate unbounded perturbations $V$ by means of Lemma~\ref{lem:specRad}. For convenience of the
  reader, we nevertheless reproduce the whole argument here.
 
  Define $S,T \colon \Ran P_+ \to \Ran P_+$ by
  \begin{equation*}
    S := P_+Q_-|_{\Ran P_+},\quad T := P_+Q_+|_{\Ran P_+} = I_{\Ran P_+} - S.
  \end{equation*}
  By hypothesis, we have $\norm{S} \le \norm{P_+Q_-} < 1$, so that $T$ is bijective. In light of Proposition~\ref{prop:GLS} and
  Lemma~\ref{lem:GLS}, it now remains to show the inclusion $\Ran(P_+Q_+|_{\cD_+}) \supset \cD_+$, that is,
  $\Ran(T^{-1}|_{\cD_+})\subset\cD_+$. To this end, we rewrite $T^{-1}$ as a Neumann series,
  \begin{equation*}
    T^{-1}
    =
    (I_{\Ran P_+} - S)^{-1}
    =
    \sum_{k=0}^\infty S^k
    .
  \end{equation*}
  Clearly, $S$ maps the domain $\cD_+=\Dom(A|_{\Ran P_+})$ into itself, so that the inclusion $\Ran(T^{-1}|_{\cD_+})\subset\cD_+$
  holds if the above series converges also with respect to the graph norm for the closed operator $\Lambda:=A|_{\Ran P_+}$. This,
  in turn, is the case if the corresponding spectral radius $r_\Lambda(S)$ of $S$ is smaller than $1$.

  For $x\in\cD_+\subset\Ran P_+$ we compute
  \begin{align*}
    \Lambda Sx
    &=
    AP_+Q_-x
      = P_+(A+V)Q_-x - P_+VQ_-x\\
    &=
    P_+Q_-(A+V)x - P_+VQ_-x\\
    &=
    S\Lambda x + Kx
  \end{align*}
  with
  \begin{equation*}
    K := (P_+Q_-V - P_+VQ_-)|_{\Ran P_+}.
  \end{equation*}
  We show that the operator $K$ is $\Lambda$-bounded with $\Lambda$-bound $0$. Indeed, let $b > 0$, and choose $a\ge0$ with
  $\norm{Vx}\le a\norm{x}+b\norm{Ax}$ for all $x\in\Dom(A)$; recall that $V$ is infinitesimal with respect to $A$ by hypothesis.
  Then,
  \begin{equation*}
    \norm{VQ_-x}
    \le
    a\norm{Q_-x} + b\norm{AQ_-x}
    \le
    a\norm{x} + b\norm{(A+V)x} + b\norm{VQ_-x},
  \end{equation*}
  so that
  \begin{align*}
    \norm{VQ_-x}
    &\le
    \frac{a}{1-b}\norm{x} + \frac{b}{1-b}\bigl(\norm{Ax}+\norm{Vx}\bigr)\\
    &\le
    \frac{a(1+b)}{1-b}\norm{x} + \frac{b(1+b)}{1-b}\norm{Ax}.
  \end{align*}
  Thus,
  \begin{equation}\label{eq:relBoundK}
   \begin{aligned}
    \norm{Kx}
    &\le
    \norm{P_+Q_-}\norm{Vx} + \norm{VQ_-x}\\
    &\le
    a\Bigl(\norm{P_+Q_-} + \frac{1+b}{1-b}\Bigr)\norm{x} + b\Bigl(\norm{P_+Q_-}+\frac{1+b}{1-b}\Bigr)\norm{\Lambda x}
   \end{aligned}
  \end{equation}
  for $x\in\Dom(\Lambda)=\cD_+$. Since $b>0$ was chosen arbitrarily, this implies that $K$ is $\Lambda$-bounded with
  $\Lambda$-bound $0$. It therefore follows from Lemma~\ref{lem:specRad} that $r_\Lambda(S)\le\norm{S}<1$, which completes the
  proof.
\end{proof}%

\begin{remark}\label{rem:relBoundK}
  (1)
  Estimate~\eqref{eq:relBoundK} suggests that also relatively bounded perturbations $V$ that are not necessarily infinitesimal
  with respect to $A$ can be considered here. Indeed, if $b_*\in[0,1)$ is the $A$-bound of $V$, then by~\eqref{eq:relBoundK} and
  Lemma~\ref{lem:specRad} we have
  \begin{equation*}
    r_\Lambda(S)
    \le
    \norm{P_+Q_-} + b_*\Bigl(\norm{P_+Q_-}+\frac{1+b_*}{1-b_*}\Bigr)
    ,
  \end{equation*}
  and the right-hand side of the latter is smaller than $1$ if and only if
  \begin{equation*}
    \norm{P_+Q_-} < \frac{1-2b_*-b_*^2}{1-b_*^2}.
  \end{equation*}
  This is a reasonable condition on the norm $\norm{P_+Q_-}$ only for $b_*<\sqrt{2}-1$.

  (2)
  A similar result as in (1) can be obtained in terms of the $(A+V)$-bound of $V$: If for some $\tilde{b}\in[0,1)$ and
  $\tilde{a}\ge 0$ one has $\norm{Vx} \le \tilde{a}\norm{x} + \tilde{b}\norm{(A+V)x}$ for all $x\in\Dom(A)=\Dom(A+V)$, then
  standard arguments as in the above proof of Theorem~\ref{thm:genOpInfinitesimal} (see also~\cite[Lemma~2.1.6]{Tre08}) show that
  \begin{equation*}
    \norm{Vx}
    \le
    \frac{\tilde{a}}{1-\tilde{b}}\norm{x} + \frac{\tilde{b}}{1-\tilde{b}}\norm{Ax}
  \end{equation*}
  and, in turn,
  \begin{align*}
    \norm{VQ_-x}
    &\le
    \tilde{a}\norm{x} + \tilde{b}\norm{(A+V)x}
      \le
      \tilde{a}\norm{x} + \tilde{b}\norm{Ax} + \tilde{b}\norm{Vx}\\
    &\le
    \tilde{a}\Bigl( 1 + \frac{\tilde{b}}{1-\tilde{b}} \Bigr)\norm{x}
      + \tilde{b}\Bigl( 1 + \frac{\tilde{b}}{1-\tilde{b}} \Bigr)\norm{Ax}\\
    &=
    \frac{\tilde{a}}{1-\tilde{b}}\norm{x} + \frac{\tilde{b}}{1-\tilde{b}}\norm{Ax}
  \end{align*}
  for all $x\in\Dom(A)$. Plugging these into~\eqref{eq:relBoundK} gives
  \begin{align*}
    \norm{Kx}
    &\le
    \norm{P_+Q_-}\norm{Vx} + \norm{VQ_-x}\\
    &\le
    (1+\norm{P_+Q_-})\Bigl( \frac{\tilde{a}}{1-\tilde{b}}\norm{x} + \frac{\tilde{b}}{1-\tilde{b}}\norm{\Lambda x} \Bigr)
  \end{align*}
  for all $x \in \Dom(\Lambda) = \cD_+$, which eventually leads to
  \begin{equation*}
    r_\Lambda(S)
    \le
    \norm{P_+Q_-} + \frac{\tilde{b}(1+\norm{P_+Q_-})}{1-\tilde{b}}
    =
    \frac{\norm{P_+Q_-}+\tilde{b}}{1-\tilde{b}}
    .
  \end{equation*}
  The right-hand side of the latter is smaller than $1$ if and only if
  \begin{equation*}
    \norm{P_+Q_-} < 1-2\tilde{b},
  \end{equation*}
  which is a reasonable condition on $\norm{P_+Q_-}$ only for $\tilde{b} < 1/2$.
\end{remark}

\section{The block diagonalization approach}\label{sec:blockDiag}

In this section, we discuss an approach to verify the hypotheses of Proposition~\ref{prop:GLS} and Lemma~\ref{lem:GLS} which
relies on techniques previously discussed in the context of block diagonalizations of operators and forms, for instance
in~\cite{MSS16} and~\cite{GKMSV17}, respectively; cf.~also Remark~\ref{rem:MSS16} below.

Recall that for the two orthogonal projections $P_+$ and $Q_+$ from Hypothesis~\ref{hyp:minimax} the inequality
$\norm{P_+-Q_+}<1$ holds if and only if $\Ran Q_+$ can be represented as
\begin{equation}\label{eq:graph}
  \Ran Q_+ = \{ f \oplus Xf \mid f\in\Ran P_+ \}
\end{equation}
with some bounded linear operator $X\colon\Ran P_+\to\Ran P_-$; in this case, one has
\begin{equation}\label{eq:normPQX}
  \norm{P_+-Q_+}
  =
  \frac{\norm{X}}{\sqrt{1+\norm{X}{}^2}}
  ,
\end{equation}
see, e.g.,~\cite[Corollary~3.4\,(i)]{KMM03:181}. The orthogonal projection $Q_+$ can then be represented as the $2\times2$ block
operator matrices
\begin{equation}\label{eq:reprQ}
 \begin{aligned}
  Q_+
  &=
  \begin{pmatrix}
    (I_{\Ran P_+}+X^*X)^{-1} & (I_{\Ran P_+}+X^*X)^{-1}X^*\\
    X(I_{\Ran P_+}+X^*X)^{-1} & X(I_{\Ran P_+}+X^*X)^{-1}X^*
  \end{pmatrix}\\
  &=
  \begin{pmatrix}
    (I_{\Ran P_+}+X^*X)^{-1} & X^*(I_{\Ran P_-}+XX^*)^{-1}\\
    (I_{\Ran P_-}+XX^*)^{-1}X & XX^*(I_{\Ran P_-}+XX^*)^{-1}
  \end{pmatrix}
 \end{aligned}
\end{equation}
with respect to $\Ran P_+\oplus\Ran P_-$, see, e.g.,~\cite[Remark~3.6]{KMM03:181}. In particular, we have
\begin{equation}\label{eq:PQY}
  P_+Q_+|_{\Ran P_+}
  =
  (I_{\Ran P_+} + X^*X)^{-1}
  ,
\end{equation}
which is in fact the starting point for the current approach: With regard to the desired relations
$\Ran (P_+Q_+|_{\cD_+}) \supset \cD_+$ and $\Ran (P_+Q_+|_{\fD_+}) \supset \fD_+$, we need to establish that the operator
$I_{\Ran P_+} + X^*X$ maps $\cD_+$ and $\fD_+$ into $\cD_+$ and $\fD_+$, respectively.

Define the skew-symmetric operator $Y$ via the $2\times2$ block operator matrix
\begin{equation}\label{eq:defY}
  Y
  =
  \begin{pmatrix} 0 & -X^*\\ X & 0 \end{pmatrix}
\end{equation}
with respect to $\Ran P_+\oplus\Ran P_-$. Then, the operators $I\pm Y$ are bijective with
\begin{equation}\label{eq:Ypm}
  (I-Y)(I+Y)
  =
  \begin{pmatrix}
    I_{\Ran P_+}+X^*X & 0\\
    0 & I_{\Ran P_-}+XX^*
  \end{pmatrix}
  .
\end{equation}

The following lemma is extracted from various sources. We comment on this afterwards in Remark~\ref{rem:PQXY} below.
\begin{lemma}\label{lem:PQXY}
  Suppose that the projections $P_+$ and $Q_+$ from Hypothesis~\ref{hyp:minimax} satisfy $\norm{P_+ - Q_+} < 1$, and let the
  operators $X$ and $Y$ be as in~\eqref{eq:graph} and~\eqref{eq:defY}, respectively. Moreover, let $\cC$ be an invariant subspace
  for both $P_+$ and $Q_+$ such that $\cC = (\cC \cap \Ran P_+) \oplus (\cC \cap \Ran P_-) =: \cC_+ \oplus \cC_-$.
  
  Then, the following are equivalent:
  \begin{enumerate}
    \renewcommand{\theenumi}{\roman{enumi}}

    \item $I_{\Ran P_+} + X^*X$ maps $\cC_+$ into itself;

    \item $I_{\Ran P_-} + XX^*$ maps $\cC_-$ into itself;

    \item $Y$ maps $\cC$ into itself;

    \item $(I+Y)$ maps $\cC$ into itself;

    \item $(I-Y)$ maps $\cC$ into itself.

  \end{enumerate}
\end{lemma}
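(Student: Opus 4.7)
The plan is to establish the five equivalences in three stages, handling the trivial implications first and isolating the combinatorial core at the end. First, the equivalences (iii) $\Leftrightarrow$ (iv) $\Leftrightarrow$ (v) are immediate from the identity $Y = (I+Y) - I = I - (I-Y)$ together with the fact that $I$ preserves $\cC$; invariance of $\cC$ under any one of $Y$, $I+Y$, $I-Y$ therefore forces invariance under the other two.

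Next, the implications (iii) $\Rightarrow$ (i) and (iii) $\Rightarrow$ (ii) follow from the direct computation
\[
  Y^2 = \begin{pmatrix} -X^*X & 0\\ 0 & -XX^* \end{pmatrix}
\]
with respect to $\Ran P_+ \oplus \Ran P_-$: if $Y$ preserves $\cC = \cC_+ \oplus \cC_-$, so does $Y^2$, and reading off components yields that $X^*X$ preserves $\cC_+$ and $XX^*$ preserves $\cC_-$, whence (i) and (ii) follow by adding the $I$-invariance.

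The core of the argument is the implication (i) $\Rightarrow$ (iii); the implication (ii) $\Rightarrow$ (iii) is handled by a completely symmetric swap of $X$ with $X^*$ and of $\cC_\pm$ with $\cC_\mp$. My plan here is to first feed the $Q_+$-invariance of $\cC$ into the block formula~\eqref{eq:reprQ} column by column: applying $Q_+$ to $f \in \cC_+$ and to $h \in \cC_-$ and projecting onto the two summands of $\cC$ yields the auxiliary inclusions $X(I+X^*X)^{-1}\cC_+ \subset \cC_-$ and $X^*(I+XX^*)^{-1}\cC_- \subset \cC_+$. Hypothesis (i) then bootstraps these to $X\cC_+ \subset \cC_-$ and $X^*\cC_- \subset \cC_+$: for $f \in \cC_+$, the element $(I+X^*X)f$ still lies in $\cC_+$ by (i), so $Xf = X(I+X^*X)^{-1}\bigl[(I+X^*X)f\bigr] \in \cC_-$ by the first auxiliary inclusion; for $h \in \cC_-$, setting $u := X^*(I+XX^*)^{-1}h \in \cC_+$ and using the intertwining $X^*(I+XX^*) = (I+X^*X)X^*$, one obtains
\[
  X^*h = X^*(I+XX^*)(I+XX^*)^{-1}h = (I+X^*X)u = u + X^*X u \in \cC_+,
\]
since $X^*X$ preserves $\cC_+$ by (i). Combining $X\cC_+ \subset \cC_-$ and $X^*\cC_- \subset \cC_+$ at once gives $Y\cC \subset \cC$, i.e., (iii).

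The principal obstacle I foresee is precisely this last bootstrap: recovering the cross-term behaviour $X^*\cC_- \subset \cC_+$ from the single-block hypothesis (i) is only possible by invoking both the full $Q_+$-invariance of $\cC$ and the algebraic intertwining $X^*(I+XX^*)=(I+X^*X)X^*$, so the book-keeping must be arranged so that both are exploited simultaneously; once this is set up, the argument reduces to the short manipulation above.
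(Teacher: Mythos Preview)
Your proof is correct and follows essentially the same approach as the paper. Both arguments exploit the block representation~\eqref{eq:reprQ} of $Q_+$ together with the $Q_+$-invariance of $\cC$ to extract the cross-term inclusions $X\cC_+\subset\cC_-$ and $X^*\cC_-\subset\cC_+$; the only organizational difference is that the paper routes the argument as (i)$\Rightarrow$(ii) (obtaining the cross-term inclusions as byproducts) before assembling (iii), whereas you go straight from (i) to (iii), and your use of the intertwining $X^*(I+XX^*)=(I+X^*X)X^*$ is exactly what the paper encodes by switching between the two block representations in~\eqref{eq:reprQ}. Your route (iii)$\Rightarrow$(i),(ii) via $Y^2$ is likewise the same content as the paper's use of~\eqref{eq:Ypm}, since $(I-Y)(I+Y)=I-Y^2$.
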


\begin{proof}
  Clearly, the hypotheses imply that $P_+Q_+$ maps $\cC$ into $\cC_+$ and $P_-Q_+$ maps $\cC$ into $\cC_-$.
  
  (i)$\Rightarrow$(ii).
  Let $g \in \cC_-$. Using the first representation in~\eqref{eq:reprQ}, we then have
  $(I_{\Ran P_+} + X^*X)^{-1}X^*g = (P_+Q_+|_{\Ran P_-})g \in \cC_+$. Hence, $X^*g \in \cC_+$ by~(i) and, in turn,
  $h := (I_{\Ran P_+} + X^*X)X^*g \in \cC_+$. Using again~\eqref{eq:reprQ}, this yields
  \begin{align*}
    (I_{\Ran P_-} + XX^*)g
    &=
    g + XX^*g
      =
      g + X(I_{\Ran P_+}+X^*X)^{-1}h\\
    &=
    g + (P_-Q_+|_{\Ran P_+})h \in \cC_-
    .
  \end{align*}
  As a byproduct, we have also shown that $X^*$ maps $\cC_-$ into $\cC_+$.

  (ii)$\Rightarrow$(i).
  Using the identities $(I_{\Ran P_-} + XX^*)^{-1}X = P_-Q_+|_{\Ran P_+}$ and $X^*(I_{\Ran P_-}+XX^*)^{-1} = P_+Q_+|_{\Ran P_-}$
  taken from the second representation in~\eqref{eq:reprQ}, the proof is completely analogous to the implication
  (i)$\Rightarrow$(ii). In particular, we likewise obtain as a byproduct that $X$ maps $\cC_+$ into $\cC_-$.

  (i),(ii)$\Rightarrow$(iii).
  We have already seen that $X$ maps $\cC_+$ into $\cC_-$ and that $X^*$ maps $\cC_-$ into $\cC_+$. Taking into account that
  $\cC = \cC_+ \oplus \cC_-$, this means that $Y$ maps $\cC$ into itself.

  (iii)$\Leftrightarrow$(iv),(v).
  This is clear.

  (iv),(v)$\Rightarrow$(i),(ii).
  This follows immediately from identity~\eqref{eq:Ypm}.
\end{proof}%

\begin{remark}\label{rem:PQXY}
  The proof of the equivalence (i)$\Leftrightarrow$(ii) and the one of the implication (i),(ii)$\Rightarrow$(iii) in
  Lemma~\ref{lem:PQXY} are extracted from the proof of~\cite[Theorem~5.1]{GKMSV17}; see also~\cite[Theorem~6.3.1 and
  Lemma~6.3.3]{SchmDiss}.

  The equivalence (iv)$\Leftrightarrow$(v) can alternatively be directly obtained from the identity
  \begin{equation*}
    \begin{pmatrix}
      I_{\Ran P_+} & 0\\
      0 & -I_{\Ran P_-}
    \end{pmatrix}
    (I + Y)
    \begin{pmatrix}
      I_{\Ran P_+} & 0\\
      0 & -I_{\Ran P_-}
    \end{pmatrix}
    =
    I - Y
    .
  \end{equation*}
  Such an argument has been used in the proof of~\cite[Proposition~3.3]{MSS16}.

  The implication (iv),(v)$\Rightarrow$(i) can essentially be found in the proof of~\cite[Theorem~5.1]{GKMSV17}
  and~\cite[Remark~6.3.2]{SchmDiss}.
\end{remark}

Below, we apply Lemma~\ref{lem:PQXY} with $\cC = \Dom(A) = \Dom(B) = \cD_+ \oplus \cD_-$\linebreak or
$\cC = \Dom(\abs{A}^{1/2}) = \Dom(\abs{B}^{1/2}) = \fD_+ \oplus \fD_-$, depending on the situation. The easiest case is
encountered in Theorem~\ref{thm:genSemibounded}, where we are in the semibounded setting:

\begin{proof}[Proof of Theorem~\ref{thm:genSemibounded}]
  Let $\Dom(\abs{A}^{1/2}) = \Dom(\abs{B}^{1/2})$ and $\fb[x , x] \le 0$ for all $x \in \fD_-$. We then have $\fD_- = \Ran P_-$
  if $A$ is bounded from below and $\fD_+ = \Ran P_+$ if $A$ is bounded from above. Hence, item~(ii) or (i), respectively, in
  Lemma~\ref{lem:PQXY} with $\cC = \fD_+ \oplus \fD_-$ is automatically satisfied. In any case, we have by Lemma~\ref{lem:PQXY}
  that $I_{\Ran P_+} + X^*X$ maps $\fD_+$ into $\fD_+$, which by identity~\eqref{eq:PQY} means that
  $\Ran (P_+Q_+|_{\fD_+}) \supset \fD_+$. The representation~\eqref{eq:genSemibounded:form} now follows from
  Proposition~\ref{prop:GLS}\,(a) and Remark~\ref{rem:PQbij}\,(1). If even $\Dom(A) = \Dom(B)$ and $\scprod{ x , Bx } \le 0$ for
  all $x \in \cD_-$, we use the same reasoning as above with $\fD_+$ and $\fD_-$ replaced by $\cD_+$ and $\cD_-$, respectively,
  and obtain representation~\eqref{eq:genSemibounded:op} from Proposition~\ref{prop:GLS}\,(b) and Remark~\ref{rem:PQbij}\,(1).
  The representation~\eqref{eq:genSemibounded:form} is then still valid by Lemma~\ref{lem:GLS} and the first part of the proof.
\end{proof}%

While certain conditions for Proposition~\ref{prop:GLS} and Lemma~\ref{lem:GLS} are part of the hypotheses of
Theorems~\ref{thm:genOpInfinitesimal} and~\ref{thm:genSemibounded}, in the situations of Theorems~\ref{thm:offdiagOp}
and~\ref{thm:offdiagForm} these need to be verified explicitly from the specific hypotheses at hand. Here, we rely on previous
considerations on block diagonalizations for block operator matrices and forms. In case of Theorem~\ref{thm:offdiagOp}, the
crucial ingredient is presented in the following result, extracted from~\cite{MSS16}. An earlier result in this direction is
commented on in Remark~\ref{rem:MSS16}\,(2) below.

\begin{proposition}[see~{\cite[Theorem~6.1]{MSS16}}]\label{prop:MSS16}
  In the situation of Theorem~\ref{thm:offdiagOp} one has $\norm{P_+-Q_+} \le \sqrt{2}/2 <1$, and the operator identity
  \begin{equation}\label{eq:blockDiag}
    (I-Y)(A+V)(I-Y)^{-1} = A-YV
  \end{equation}
  holds with $Y$ as in~\eqref{eq:defY}.
\end{proposition}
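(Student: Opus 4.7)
The plan is to work in the $2\times 2$ block-matrix representation with respect to the decomposition $\Ran P_+ \oplus \Ran P_-$. In this decomposition, $A$ is diagonal with parts $A_\pm := A|_{\Ran P_\pm}$, the perturbation $V$ is genuinely off-diagonal with blocks $W := P_+ V P_-$ and $W^* = P_- V P_+$ (by the off-diagonality hypothesis), and $Y$ has the skew-symmetric block form~\eqref{eq:defY} with $X \colon \Ran P_+ \to \Ran P_-$ the angle operator from the graph representation~\eqref{eq:graph}.

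For the norm bound, I would first need to establish that $\Ran Q_+$ is indeed a graph subspace over $\Ran P_+$, and simultaneously obtain a quantitative estimate on $\norm{X}$. This is a Davis--Kahan type statement for off-diagonal perturbations with relative $A$-bound strictly less than~$1$: since the spectra of $A_+$ and $A_-$ lie on opposite sides of~$0$ and $V$ is off-diagonal, the sharp $\tan\Theta$ theorem in this setting (as developed in~\cite{MSS16}) yields $\norm{X} \le 1$. By identity~\eqref{eq:normPQX} this translates immediately into $\norm{P_+-Q_+} = \norm{X}/\sqrt{1+\norm{X}^2} \le \sqrt{2}/2 < 1$.

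For the operator identity~\eqref{eq:blockDiag}, the key observation is that the invariance of $\Ran Q_+$ under $B = A+V$ is equivalent to the operator $X$ satisfying a Riccati-type equation of the form $X A_+ - A_- X + X W X = W^*$ on $\cD_+$. A direct block-matrix expansion of $(I-Y)(A+V)$ and $(A-YV)(I-Y)$ shows that the two agree entry by entry exactly when this Riccati equation holds: the diagonal entries match automatically thanks to the off-diagonal structure of $V$, while the off-diagonal entries reproduce the Riccati equation and its adjoint. Thus the desired identity reduces formally to the invariance property of $\Ran Q_+$.

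The main obstacle will be the domain bookkeeping. Since $V$ is only $A$-bounded and not bounded, the operator $Y$ need not be bounded in the graph norm of $A$, and one must verify that $I \pm Y$ map $\Dom(A) = \Dom(A+V)$ bijectively onto itself before~\eqref{eq:blockDiag} is meaningful as an operator identity. Here Lemma~\ref{lem:PQXY}, applied with $\cC = \Dom(A) = \cD_+ \oplus \cD_-$, is the appropriate tool: it reduces the invariance of $\Dom(A)$ under $I \pm Y$ to the invariance of $\cD_+$ under $I + X^*X$, which in turn follows via~\eqref{eq:PQY} from the fact that $P_+ Q_+$ maps $\cD_+$ into $\cD_+$ (since $Q_+$ is a spectral projection for $B$ and hence leaves $\Dom(B) = \Dom(A)$ invariant). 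Once these mapping properties are settled, the Riccati equation derived above yields~\eqref{eq:blockDiag} at the operator level, completing the argument.
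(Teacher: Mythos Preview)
Your overall strategy is sensible, but the domain bookkeeping step contains a genuine gap. You write that the invariance of $\cD_+$ under $I + X^*X$ ``follows via~\eqref{eq:PQY} from the fact that $P_+ Q_+$ maps $\cD_+$ into $\cD_+$.'' But~\eqref{eq:PQY} says $P_+Q_+|_{\Ran P_+} = (I + X^*X)^{-1}$, so the (always true) inclusion $P_+Q_+(\cD_+) \subset \cD_+$ only gives that $(I+X^*X)^{-1}$ maps $\cD_+$ into itself. This does \emph{not} imply that $I + X^*X$ maps $\cD_+$ into itself: for that you would need $(I+X^*X)^{-1}$ to map $\cD_+$ \emph{onto} $\cD_+$, which is exactly the nontrivial surjectivity condition $\Ran(P_+Q_+|_{\cD_+}) \supset \cD_+$ that the whole machinery is designed to establish. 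So your argument is circular at this point, and Lemma~\ref{lem:PQXY} cannot be invoked in the direction you want.

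In the paper the logic runs the other way. Proposition~\ref{prop:MSS16} is not proved from scratch here; it is imported from~\cite[Theorem~6.1]{MSS16} (after the auxiliary kernel inclusion $\Ker(A+V)\subset\Ker A\subset\Ran P_-$ from~\cite[Lemma~6.3]{MSS16}), where the Riccati equation and the required domain mapping properties of $X$, $X^*$ are established directly. The block diagonalization identity~\eqref{eq:blockDiag} then \emph{implies} that $I-Y$ preserves $\Dom(A)$ (since $\Dom(A+V)=\Dom(A)=\Dom(A-YV)$), and only \emph{afterwards}, in the proof of Theorem~\ref{thm:offdiagOp}, is Lemma~\ref{lem:PQXY} used in the direction (v)$\Rightarrow$(i) to obtain the invariance of $\cD_+$ under $I+X^*X$. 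In short: you need the block diagonalization result as input to get the domain properties, not the domain properties as input to the block diagonalization.
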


\begin{proof}
  Set $V_0 := V|_{\Dom(A)}$, so that we have $B = A+V = A+V_0$ as well as $A-YV = A-YV_0$. Clearly, the hypotheses on $V$ ensure
  that $V_0$ is $A$-bounded with $A$-bound $b_*<1$ and off-diagonal with respect to the decomposition $\Ran P_+\oplus\Ran P_-$.
  By~\cite[Lemma~6.3]{MSS16} we now have 
  \begin{equation*}
    \Ker(A+V_0)
    \subset
    \Ker A
    \subset
    \Ran P_-
    .
  \end{equation*}
  In light of~\eqref{eq:normPQX}, the claim therefore is just an instance of~\cite[Theorem~6.1]{MSS16}.
\end{proof}%

\begin{remark}\label{rem:MSS16}
  (1)
  Let $A_\pm:=A|_{\Ran P_\pm}$ be the parts of $A$ associated with the subspaces $\Ran P_\pm$, and write
  \begin{equation*}
    V|_{\Dom(A)}
    =
    \begin{pmatrix} 0 & W\\ W^* & 0 \end{pmatrix}
    ,
  \end{equation*}
  where $W\colon \Ran P_-\supset\cD_-\to\Ran P_+$ is given by $Wx:=P_+Vx$, $x\in\cD_-$. Then,
  \begin{equation*}
    A - YV
    =
    \begin{pmatrix}
      A_+ - X^*W^* & 0\\
      0 & A_- + XW
    \end{pmatrix}
    .
  \end{equation*} 
  In this sense, identity~\eqref{eq:blockDiag} can be viewed as a block diagonalization of the operator $A+V$. For a more
  detailed discussion of block diagonalizations and operator Riccati equations in the operator setting, the reader is referred
  to~\cite{MSS16} and the references cited therein.

  (2)
  In the particular case where $0$ belongs to the resolvent set of $A$, the conclusion of Proposition~\ref{prop:MSS16} can be
  inferred also from~\cite[Theorems~2.7.21 and~2.8.5]{Tre08}.
\end{remark}

\begin{proof}[Proof of Theorem~\ref{thm:offdiagOp}]
  For $x \in \cD_-$, we have
  \begin{equation*}
    \scprod{ x , Vx }
    =
    \scprod{ P_-x , VP_-x }
    =
    \scprod{ x , P_-VP_-x }
    =
    0
  \end{equation*}
  and, thus,
  \begin{equation*}
    \scprod{ x , (A+V)x }
    =
    \scprod{ x , Ax }
    \le
    0
    .
  \end{equation*}
  Moreover, by Proposition~\ref{prop:MSS16} the inequality $\norm{P_+-Q_+}<1$ is satisfied. Let $Y$ be as in~\eqref{eq:defY}.
  Since $\Dom(A+V)=\Dom(A)=\Dom(A-YV)$, it then follows from identity~\eqref{eq:blockDiag} that $I-Y$ maps
  $\cC := \Dom(A) = \cD_+ \oplus \cD_-$ into itself. In turn, Lemma~\ref{lem:PQXY} implies that $I_{\Ran P_+} + X^*X$ maps
  $\cD_+$ into itself, which by identity~\eqref{eq:PQY} means that $\Ran(P_+Q_+|_{\cD_+}) \supset \cD_+$. The claim now follows
  from Proposition~\ref{prop:GLS}, Lemma~\ref{lem:GLS}, and Remark~\ref{rem:PQbij}\,(1).
\end{proof}

To the best of the author's knowledge, no direct analogue of Proposition~\ref{prop:MSS16} is known so far in the setting of form
rather than operator perturbations. Although the inequality $\norm{P_+-Q_+} \le \sqrt{2}/2$ can be established here as well under
fairly reasonable assumptions, see~\cite[Theorem~3.3]{GKMSV17}, the mapping properties of the operators $I \pm Y$ connected with
a corresponding diagonalization related to~\eqref{eq:blockDiag} are much harder to verify. The situation is even more subtle
there since also the domain equality $\Dom(\abs{A}^{1/2}) = \Dom(\abs{B}^{1/2})$ needs careful treatment. The latter is
conjectured to hold in a general off-diagonal form perturbation framework~\cite[Remark~2.7]{GKMV13}. Some characterizations have
been discussed in~\cite[Theorem~3.8]{Schm15}, but they all are hard to verify in a general abstract setting. A compromise in this
direction is to require that the form $\fb$ is semibounded, see~\cite[Lemma~3.9]{Schm15} and~\cite[Lemma~2.7]{GKMSV17}, which
forces the diagonal form $\fa$ to be semibounded as well, see below. As in the situation of Theorem~\ref{thm:genSemibounded}
above, this simplifies matters immensely:

\begin{proof}[Proof of Theorem~\ref{thm:offdiagForm}]
  For $x \in \fD_- = \Ran P_- \cap \Dom[\fa]$ we have
  \begin{equation*}
    \fv[ P_-x , P_-x ]
    =
    0
  \end{equation*}
  and, thus,
  \begin{equation*}
    \fb[ x , x ]
    =
    \fa[ x , x ]
    \le
    0
    .
  \end{equation*}
  In the same way, we see that $\fb[ x , x ] = \fa[ x , x ]$ for $x \in \fD_+$, which by the identity
  $\fa[ x , x ] = \fa[ P_+x , P_+x ] + \fa[ P_-x , P_-x ]$ for all $x \in \Dom[\fa]$ implies that along with $\fb$ also the form
  $\fa$ is semibounded; cf.~the proof of~\cite[Lemma~2.7]{GKMSV17}. Since also
  $\Dom(\abs{B}^{1/2}) = \Dom[\fb] = \Dom[\fa] = \Dom(\abs{A}^{1/2})$ by hypothesis and in view of
  Theorem~\ref{thm:genSemibounded}, it only remains to show that $\norm{ P_+ - Q_+ } < 1$.

  To this end, we first show that $\fb$ is a semibounded~\emph{saddle-point form} in the sense of~\cite[Section~2]{GKMSV17}: Let
  $m \in \RR$ be the lower (resp.~upper) bound of $\fa$. We then have
  \begin{equation*}
    \abs{(\fa-m)[ x , x ]}
    =
    \norm{ \abs{A-m}^{1/2}x }^2
    \le
    \norm{ \abs{A-m}^{1/2}(\abs{A}^{1/2}+I)^{-1} } \norm{ (\abs{A}^{1/2}+I)x }^2
  \end{equation*}
  for all $x \in \Dom[\fa]$, where $\abs{A-m}^{1/2}(\abs{A}^{1/2}+I)^{-1}$ is closed and everywhere defined, hence bounded by the
  closed graph theorem. From this and the hypothesis on $\fv$ we see that
  \begin{equation*}
    \abs{ \fv[ x , x ] }
    \le
    \beta\bigl( \norm{\abs{A}^{1/2}x}^2 + \norm{x}^2 \bigr)
    \quad\text{ for all }\
    x \in \Dom[\fa]
  \end{equation*}
  with some $\beta \ge 0$, which means that $\fb = \fa + \fv = \fa + \fv_0$ with $\fv_0 := \fv|_{\Dom[\fa]}$ is indeed a
  semibounded saddle-point form.

  It now follows from~\cite[Theorem~2.13]{Schm15} that
  \begin{equation*}
    \Ker B
    \subset
    \Ker A
    \subset
    \Ran P_-
    .
  \end{equation*}
  In turn,~\cite[Theorem~3.3]{GKMSV17} and~\eqref{eq:normPQX} give $\norm{ P_+ - Q_+ } \le \sqrt{2}/2 < 1$, which completes the
  proof.
\end{proof}%

\appendix

\section{Proof of Proposition~\ref{prop:GLS}}\label{sec:GLS}

The following abstract minimax principle is extracted from \cite[Theorem~1]{GLS99} and its proof follows the one in \cite{GLS99}
almost literally. However, the result below is formulated in such a way that the operator and form settings can be handled
simultaneously. To this end, we use a suitable subset $\cC$ of $\Dom(\abs{B}^{1/2})$ satisfying $\Dom(B) \subset \cC$. Part~(a)
of Proposition~\ref{prop:GLS} then agrees with the choice $\cC = \Dom(\abs{B}^{1/2})$, whereas Part~(b) of
Proposition~\ref{prop:GLS} corresponds to $\cC = \Dom(B)$.

\begin{proposition}[cf.~{\cite[Theorem~1]{GLS99}}]
	Assume Hypothesis~\ref{hyp:minimax}. Moreover, let $\cC$ be a subspace satisfying
	$\Dom(B) \subset \cC \subset \Dom(\abs{B}^{1/2})$ such that $\cC$ is invariant for $P_+$ and $Q_+$. Set
	$\cC_\pm := \cC \cap \Ran P_\pm$ and suppose that $\fb[ x , x ] \le 0$ for all $x \in \cC_-$.
	\begin{enumerate}
	  \renewcommand{\theenumi}{\alph{enumi}}

    \item
    For all $k \in \NN$ with $k \le \dim \Ran Q_+$ we have $k \le \dim \Ran P_+$ and
    \begin{equation*}
      \inf_{\substack{\fM_+ \subset \cC_+\\ \dim\fM_+ = k}} \sup_{\substack{x \in \fM_+ \oplus \cC_-\\ \norm{x}=1}} \fb[ x , x ]
      \le
      \lambda_k(B|_{\Ran Q_+})
      .
    \end{equation*}

    \item
    If, in addition, $\Ran(P_+Q_+|_{\cC_+}) \supset \cC_+$ holds, then for all $k \in \NN$ with $k \le \dim \Ran P_+$ we have
    $k \le \dim \Ran Q_+$ and
    \begin{equation*}
      \inf_{\substack{\fM_+ \subset \cC_+\\ \dim\fM_+ = k}} \sup_{\substack{x \in \fM_+ \oplus \cC_-\\ \norm{x}=1}} \fb[ x , x ]
      =
      \lambda_k(B|_{\Ran Q_+})
      .
    \end{equation*}

	\end{enumerate}
\end{proposition}

\begin{proof}
  We first show that the restriction
  \begin{equation}\label{eq:restriction}
    P_+|_{\cC \cap \Ran Q_+} \colon \cC \cap \Ran Q_+ \to \cC_+
  \end{equation}
  is injective. Indeed, assume to the contrary that $P_+x = 0$ for some non-zero $x \in \cC \cap \Ran Q_+$. Then, on the one hand
  we have $\fb[x , x] > 0$, and on the other hand $x \in \Ran P_-$, that is, $x \in \cC_-$. The latter gives $\fb[x , x] \le 0$
  by hypothesis, a contradiction.

  (a).
  Let $k \in \NN$ with $k \le \dim \Ran Q_+$. Let $\eps > 0$ be arbitrary, and abbreviate $\lambda_k = \lambda_k(B|_{\Ran Q_+})$.
  Consider the subspace
  \begin{equation*}
    \fM
    :=
    \Ran\EE_B((0,\lambda_k+\eps))
    \subset
    \cC \cap \Ran Q_+
    ,
  \end{equation*}
  and denote by $P_\fM$ the orthogonal projection onto $\fM$. We clearly have $\dim\fM \ge k$. Choose any subspace
  $\fN \subset \fM$ with $\dim\fN = k$. The injectivity of \eqref{eq:restriction} then gives
  $\dim \Ran P_+ \ge \dim \Ran P_+|_\fN = \dim \fN = k$.

  Let $x = P_+ u \oplus w$, $\norm{x} = 1$, with $u \in \fN$ and $w \in \cC_-$. Then, $x = u + v$ with $v = w - P_-u \in \cC_-$.
  Moreover, $P_\fM v \in \fM$ and, taking into account that $\fM$ is reducing for $B$, we have
  \begin{equation}\label{eq:bnonneg}
    \fb[ v , P_\fM v ]
    =
    \fb[ P_\fM v , v ]
    =
    \fb[ P_\fM v , P_\fM v ]
    \ge
    0
    .
  \end{equation}
  Set $x_1 := u + P_\fM v \in \fM$ and $x_2 := v - P_\fM v \in (\cC_- + \fM) \cap \fM^\perp$. From \eqref{eq:bnonneg} it follows
  that
  \begin{equation*}
    \fb[ x_2 , x_2 ]
    =
    \fb[ v , v ] - \fb[ P_\fM v , P_\fM v ]
    \le
    \fb[ v , v ]
    \le
    0
    .
  \end{equation*}
  In turn, taking into account that $x = u + v = x_1 \oplus x_2$ and that $\fM$ and $\fM^\perp$ are reducing for $B$, we obtain
  \begin{equation*}
    \fb[ x , x ]
    =
    \fb[ x_1 , x_1 ] + \fb[ x_2 , x_2 ]
    \le
    \fb[ x_1 , x_1 ]
    <
    (\lambda_k + \eps)\norm{x_1}^2
    \le
    \lambda_k + \eps
    .
  \end{equation*}
  In light of $\dim \Ran P_+|_\fN = k$ as observed above, we thus conclude that
  \begin{equation*}
    \inf_{\substack{\fM_+ \subset \cC_+\\ \dim\fM_+ = k}} \sup_{\substack{x \in \fM_+ \oplus \cC_-\\ \norm{x} = 1}} \fb[ x , x ]
    \le
    \sup_{\substack{x \in \Ran P_+|_\fN \oplus \cC_-\\ \norm{x} = 1}} \fb[ x , x ]
    \le
    \lambda_k + \eps
    .
  \end{equation*}
  Since $\eps$ was chosen arbitrarily, this proves (a).

  (b).
  The additional assumption $\Ran(P_+Q_+|_{\cC_+}) \supset \cC_+$ guarantees that the restriction \eqref{eq:restriction} is also
  surjective, hence bijective. Let $k \in \NN$ with $k \le \dim \Ran P_+$, and let $\fM_+$ be any subspace of $\cC_+$ with
  $\dim\fM_+ = k$; note that $\cC_+$ is dense in $\Ran P_+$, so that such a subspace indeed exists. By bijectivity of
  \eqref{eq:restriction}, there is a subspace $\fM \subset \cC \cap \Ran Q_+$ with $\dim\fM = k$ and $\fM_+ = \Ran P_+|_\fM$. In
  particular, we have $k \le \dim \Ran Q_+$. Since $\fM \subset \cC$, we have $\fM \subset \fM_+ \oplus \cC_-$ and, therefore,
  \begin{equation*}
    \sup_{\substack{x \in \fM_+ \oplus \cC_-\\ \norm{x}=1}} \fb[ x , x ]
    \ge
    \sup_{\substack{x \in \fM\\ \norm{x}=1}} \fb[ x , x ]
    \ge
    \inf_{\substack{\fN \subset \cC \cap \Ran Q_+\\ \dim\fN = k}} \sup_{\substack{x \in \fN\\ \norm{x}=1}} \fb[ x , x ]
    \ge
    \lambda_k(B|_{\Ran Q_+})
    .
  \end{equation*}
  Since $\fM_+$ was chosen arbitrarily, together with part (a) this proves (b) and, thus, completes the proof.
\end{proof}%

\section{Heinz inequality}\label{sec:heinz}

In this appendix we discuss some consequences of the well-known Heinz inequality. These consequences or particular cases thereof
are used at various spots of the main part of the paper, but they may also be of independent interest. They are probably
folklore, but in lack of a suitable reference they are nevertheless presented here in full detail.

Throughout this appendix, we denote the norm associated with the inner product of a Hilbert space $\cH$ by
$\norm{\,\cdot\,}_\cH$.

The following variant of the Heinz inequality is taken from~\cite{Kre71}.

\begin{proposition}[{\cite[Theorem~I.7.1]{Kre71}}]\label{propHeinz}
  Let $\Lambda_1$ and $\Lambda_2$ be strictly positive self-adjoint operators on Hilbert spaces $\cH_1$ and $\cH_2$,
  respectively. Moreover, let $S\colon\cH_1\to\cH_2$ be a bounded operator mapping $\Dom(\Lambda_1)$ into $\Dom(\Lambda_2)$, and
  suppose that there is a constant $C\ge0$ such that
  \begin{equation*}
    \norm{\Lambda_2Sx}_{\cH_2}
    \le
    C\cdot\norm{\Lambda_1x}_{\cH_1}
    \quad\text{ for all }\
    x\in\Dom(\Lambda_1)
    .
  \end{equation*}
  Then, for all $\nu\in[0,1]$, the operator $S$ maps $\Dom(\Lambda_1^\nu)$ into $\Dom(\Lambda_2^\nu)$, and for all
  $x\in\Dom(\Lambda_1^\nu)$ one has
  \begin{equation*}
    \norm{\Lambda_2^\nu Sx}_{\cH_2}
    \le
    C^\nu\norm{S}_{\cH_1\to\cH_2}^{1-\nu}\norm{\Lambda_1^\nu x}_{\cH_1}
    .
  \end{equation*}
\end{proposition}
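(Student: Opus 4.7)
\textbf{Proof plan for Proposition~\ref{propHeinz}.}
The plan is to apply Hadamard's three-lines theorem to a suitable scalar holomorphic function on the strip $\Sigma := \{z \in \CC : 0 \le \operatorname{Re} z \le 1\}$. Recall that, by the spectral theorem for the strictly positive self-adjoint operators $\Lambda_j$, the complex power $\Lambda_j^z$ is defined for every $z \in \CC$ with $\Dom(\Lambda_j^z) = \Dom(\Lambda_j^{\operatorname{Re} z})$, and $\Lambda_j^{it}$ is unitary for every $t \in \RR$.

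First I would establish the inequality on a dense core of $\Dom(\Lambda_1^\nu)$. Fix $\eps > 0$ and take $x \in \Ran E_{\Lambda_1}([\eps,1/\eps])$; the union of these ranges over $\eps > 0$ is a core for every $\Lambda_1^a$, $a \ge 0$. For such $x$ and any $y \in \cH_2$ with $\norm{y}_{\cH_2} = 1$, consider
\begin{equation*}
  F(z) := \scprod{ \Lambda_2^z S \Lambda_1^{\nu - z} x , y }_{\cH_2}, \quad z \in \Sigma.
\end{equation*}
Since $\Lambda_1^{\nu - z} x$ lies in $\bigcap_{n} \Dom(\Lambda_1^n)$ and depends holomorphically on $z \in \CC$ with norm uniformly bounded on $\Sigma$, and since $S$ maps $\Dom(\Lambda_1)$ into $\Dom(\Lambda_2)$, the function $F$ is well-defined, continuous and bounded on $\Sigma$, and holomorphic on its interior.

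The boundary estimates then read as follows. For $z = it$, the unitarity of $\Lambda_2^{it}$ and the identity $\Lambda_1^{\nu - it} x = \Lambda_1^{-it} \Lambda_1^\nu x$ give
\begin{equation*}
  \abs{F(it)} \le \norm{ S \Lambda_1^{\nu - it} x }_{\cH_2} \le \norm{S}_{\cH_1 \to \cH_2} \norm{\Lambda_1^\nu x}_{\cH_1} .
\end{equation*}
For $z = 1 + it$, the vector $\Lambda_1^{\nu - 1 - it} x$ lies in $\Dom(\Lambda_1)$, so the hypothesis applies and yields
\begin{equation*}
  \abs{F(1+it)} \le \norm{\Lambda_2 S \Lambda_1^{\nu - 1 - it} x}_{\cH_2} \le C \norm{\Lambda_1^{\nu - it} x}_{\cH_1} = C \norm{\Lambda_1^\nu x}_{\cH_1} .
\end{equation*}
Hadamard's three-lines theorem then produces
\begin{equation*}
  \abs{ \scprod{ \Lambda_2^\nu S x , y }_{\cH_2} } = \abs{F(\nu)} \le C^\nu \norm{S}_{\cH_1 \to \cH_2}^{1-\nu} \norm{\Lambda_1^\nu x}_{\cH_1} ,
\end{equation*}
and taking the supremum over admissible $y$ yields both $Sx \in \Dom(\Lambda_2^\nu)$ and the claimed bound for $x$ in the core.

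Finally, the extension to arbitrary $x \in \Dom(\Lambda_1^\nu)$ is a standard approximation argument using that the union of the spectral cut-offs $\Ran E_{\Lambda_1}([\eps, 1/\eps])$ is a core for $\Lambda_1^\nu$ and that $\Lambda_2^\nu$ is closed. The main obstacle is the careful verification of analyticity, continuity, and uniform boundedness of $F$ on $\Sigma$; this is where the spectral cut-off plays a decisive role, turning $\Lambda_1^{\nu - z} x$ into an entire $\cH_1$-valued function with transparent uniform bounds on the strip, thereby sidestepping any delicate domain issues in the successive application of $\Lambda_1^{\nu - z}$, $S$, and $\Lambda_2^z$.
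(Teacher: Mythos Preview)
The paper does not give its own proof of Proposition~\ref{propHeinz}; the result is quoted from Kre{\u\i}n's monograph~\cite[Theorem~I.7.1]{Kre71} as a known input, so there is no argument in the paper to compare against. Your route via Hadamard's three-lines theorem is one of the standard proofs of the Heinz inequality and is essentially correct.

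One point worth tightening is the justification of holomorphy, continuity, and boundedness of $F$ on $\Sigma$. Since $\Lambda_1$ is strictly positive (hence boundedly invertible), the hypothesis says exactly that $T := \Lambda_2 S \Lambda_1^{-1}$ extends to a bounded operator with $\norm{T} \le C$. Rewriting
\[
  F(z) \;=\; \scprod{\,\Lambda_2^{\,z-1}\, T\, \Lambda_1^{\,1+\nu-z} x \,,\, y\,}_{\cH_2}
\]
makes all three properties transparent: $z \mapsto \Lambda_1^{1+\nu-z}x$ is entire and uniformly bounded on $\Sigma$ by the spectral cutoff, $T$ is bounded, and $z \mapsto \Lambda_2^{z-1}$ is a uniformly bounded, strongly continuous operator family on $\Sigma$, holomorphic on the interior (strict positivity of $\Lambda_2$). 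Alternatively one may also place $y$ in a spectral cutoff for $\Lambda_2$ and write $F(z) = \scprod{ S\Lambda_1^{\nu-z}x , \Lambda_2^{\bar z} y }_{\cH_2}$, taking the supremum over such $y$ at the end. Finally, your remark that ``$Sx \in \Dom(\Lambda_2^\nu)$'' follows from the supremum is superfluous for $x$ in the core, since there $Sx \in \Dom(\Lambda_2) \subset \Dom(\Lambda_2^\nu)$ is already known; the domain assertion enters genuinely only in the closing approximation step, via closedness of $\Lambda_2^\nu$.
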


The above result admits the following extension to closed densely defined operators between Hilbert spaces. For a generalization
of Proposition~\ref{propHeinz} to maximal accretive operators, see~\cite{Kato61}.

\begin{proposition}\label{prop:heinz}
  Let $\cH_1$, $\cH_2$, $\cK_1$, and $\cK_2$ be Hilbert spaces, and let $\Lambda_1\colon\cH_1\supset\Dom(\Lambda_1)\to\cK_1$ and
  $\Lambda_2\colon\cH_2\supset\Dom(\Lambda_2)\to\cK_2$ be closed densely defined operators. Moreover, let $S\colon\cH_1\to\cH_2$
  be a bounded operator mapping $\Dom(\Lambda_1)$ into $\Dom(\Lambda_2)$, and suppose that there is a constant $C\ge0$ such that
  \begin{equation*}
    \norm{\Lambda_2Sx}_{\cK_2}
    \le
    C\cdot\norm{\Lambda_1x}_{\cK_1}
    \quad\text{ for all }\
    x\in\Dom(\Lambda_1)
    .
  \end{equation*}
  Then, for all $\nu\in[0,1]$, the operator $S$ maps $\Dom(\abs{\Lambda_1}^\nu)$ into $\Dom(\abs{\Lambda_2}^\nu)$.
\end{proposition}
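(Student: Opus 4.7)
The plan is to reduce Proposition~\ref{prop:heinz} to Proposition~\ref{propHeinz} by means of polar decomposition and a regularization trick. Recall that for a closed densely defined operator $\Lambda \colon \cH \supset \Dom(\Lambda) \to \cK$ the modulus $\abs{\Lambda} = (\Lambda^*\Lambda)^{1/2}$ is a nonnegative self-adjoint operator on $\cH$ satisfying $\Dom(\abs{\Lambda}) = \Dom(\Lambda)$ and $\norm{\abs{\Lambda}x}_\cH = \norm{\Lambda x}_\cK$ for all $x\in\Dom(\Lambda)$. Applying this to $\Lambda_1$ and $\Lambda_2$, the hypothesis of Proposition~\ref{prop:heinz} rewrites as: $S$ maps $\Dom(\abs{\Lambda_1})$ into $\Dom(\abs{\Lambda_2})$, and
\begin{equation*}
  \norm{\abs{\Lambda_2}Sx}_{\cH_2}
  \le
  C\cdot\norm{\abs{\Lambda_1}x}_{\cH_1}
  \quad\text{ for all }\ x\in\Dom(\abs{\Lambda_1}).
\end{equation*}

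The operators $\abs{\Lambda_1}$ and $\abs{\Lambda_2}$ are self-adjoint but only nonnegative, whereas Proposition~\ref{propHeinz} requires strict positivity. I would therefore pick any $\alpha > 0$ and consider instead the strictly positive self-adjoint operators $\abs{\Lambda_j} + \alpha I$, $j\in\{1,2\}$, which have the same domains as $\abs{\Lambda_j}$. For $x\in\Dom(\abs{\Lambda_1})$, the triangle inequality together with $\norm{\abs{\Lambda_j}x}_{\cH_j} \le \norm{(\abs{\Lambda_j}+\alpha I)x}_{\cH_j}$ and $\alpha\norm{x}_{\cH_j} \le \norm{(\abs{\Lambda_j}+\alpha I)x}_{\cH_j}$ (both consequences of $\norm{(\abs{\Lambda_j}+\alpha I)x}^2 \ge \norm{\abs{\Lambda_j}x}^2 + \alpha^2\norm{x}^2$) gives
\begin{equation*}
  \norm{(\abs{\Lambda_2}+\alpha I)Sx}_{\cH_2}
  \le
  \norm{\abs{\Lambda_2}Sx}_{\cH_2} + \alpha\norm{S}\norm{x}_{\cH_1}
  \le
  (C + \norm{S})\norm{(\abs{\Lambda_1}+\alpha I)x}_{\cH_1}.
\end{equation*}
Proposition~\ref{propHeinz} applied to $\abs{\Lambda_1}+\alpha I$ and $\abs{\Lambda_2}+\alpha I$ then yields, for every $\nu\in[0,1]$, that $S$ maps $\Dom((\abs{\Lambda_1}+\alpha I)^\nu)$ into $\Dom((\abs{\Lambda_2}+\alpha I)^\nu)$.

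It remains to identify these domains with $\Dom(\abs{\Lambda_j}^\nu)$. For a nonnegative self-adjoint operator $T$ on a Hilbert space and $\alpha>0$, the spectral theorem gives $\Dom(T^\nu) = \{x : \int_0^\infty \lambda^{2\nu}\,\dd\scprod{\EE_T(\lambda)x,x} < \infty\}$ and analogously for $\Dom((T+\alpha I)^\nu)$ with integrand $(\lambda+\alpha)^{2\nu}$. Since $\nu\in[0,1]$ and $\lambda\ge 0$, one has the elementary two-sided estimate $\lambda^{2\nu} \le (\lambda+\alpha)^{2\nu} \le 2^{2\nu}(\lambda^{2\nu} + \alpha^{2\nu})$, so the two domains coincide. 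Applying this to $T=\abs{\Lambda_j}$ concludes the proof.

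No step looks genuinely difficult; the main point to get right is the regularization in the middle paragraph, where the use of $(\abs{\Lambda_j}+\alpha I)$ in place of $\abs{\Lambda_j}$ has to be balanced between producing the strict positivity that Proposition~\ref{propHeinz} requires and leaving the domains of fractional powers unchanged. Once this is in place, the argument is a routine combination of polar decomposition, the Heinz inequality, and spectral calculus.
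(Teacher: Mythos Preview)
Your proof is correct and follows essentially the same route as the paper's: reduce to the strictly positive case via the polar decomposition $\norm{\Lambda_j x}_{\cK_j}=\norm{\abs{\Lambda_j}x}_{\cH_j}$, regularize by passing to $\abs{\Lambda_j}+\alpha I$ (the paper simply takes $\alpha=1$), apply Proposition~\ref{propHeinz}, and then use functional calculus to identify $\Dom((\abs{\Lambda_j}+\alpha I)^\nu)=\Dom(\abs{\Lambda_j}^\nu)$. The only cosmetic difference is that the paper obtains the constant $\tilde C$ by writing $\Lambda_1(\abs{\Lambda_1}+I)^{-1}$ and $S(\abs{\Lambda_1}+I)^{-1}$ as bounded operators via the closed graph theorem, whereas you use the more direct spectral inequality $\norm{(\abs{\Lambda_j}+\alpha I)x}^2\ge\norm{\abs{\Lambda_j}x}^2+\alpha^2\norm{x}^2$.
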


\begin{proof}
  Recall that $\norm{\Lambda_jy}_{\cK_j}=\norm{\abs{\Lambda_j}y}_{\cH_j}$ for all $y\in\Dom(\Lambda_j)=\Dom(\abs{\Lambda_j})$,
  $j=1,2$. Moreover, the operator $S$ maps $\Dom(\abs{\Lambda_1}+I_{\cH_1})=\Dom(\Lambda_1)$ into
  $\Dom(\abs{\Lambda_2}+I_{\cH_2})=\Dom(\Lambda_2)$ by hypothesis. We estimate
  \begin{align*}
    \norm{(\abs{\Lambda_2}+I_{\cH_2})Sx}_{\cH_2}
    &\le
    \norm{\Lambda_2Sx}_{\cK_2} + \norm{Sx}_{\cH_2} \le C \norm{\Lambda_1x}_{\cK_1} + \norm{Sx}_{\cH_2}\\
    &\le
    \widetilde{C} \norm{(\abs{\Lambda_1}+I_{\cH_1})x}_{\cH_1}
  \end{align*}
  for all $x\in\Dom(\Lambda_1)$ with
  \begin{equation*}
    \widetilde{C}
    :=
    C\norm{\Lambda_1(\abs{\Lambda_1}+I_{\cH_1})^{-1}}_{\cH_1\to\cK_1} + \norm{S(\abs{\Lambda_1}+I_{\cH_1})^{-1}}_{\cH_1\to\cH_2}
    .
  \end{equation*}
  Here, we have taken into account that $\Lambda_1(\abs{\Lambda_1}+I_{\cH_1})^{-1}$ is a closed and everywhere defined operator
  from $\cH_1$ to $\cK_1$, hence bounded by the closed graph theorem.

  Applying Proposition~\ref{propHeinz} now yields that $S$ maps $\Dom((\abs{\Lambda_1}+I_{\cH_1})^\nu)$ into
  $\Dom((\abs{\Lambda_2}+I_{\cH_2})^\nu)$ for all $\nu\in[0,1]$. It remains to observe that by functional calculus one has
  $\Dom((\abs{\Lambda_j}+I_{\cH_j})^\nu)=\Dom(\abs{\Lambda_j}^\nu)$ for $j\in\{1,2\}$, which completes the proof.
\end{proof}%

We now obtain several easy corollaries.

\begin{corollary}[{cf.~\cite[Corollary~3.3]{Schm15}}]\label{cor:SchmDiss}
  Let $\cH$, $\cK_1$, and $\cK_2$ be Hilbert spaces, and let $\Lambda_1\colon\cH\supset\Dom(\Lambda_1)\to\cK_1$ and
  $\Lambda_2\colon\cH\supset\Dom(\Lambda_2)\to\cK_2$ be closed densely defined operators.

  If $\Dom(\Lambda_1) \subset \Dom(\Lambda_2)$, then $\Dom(\abs{\Lambda_1}^\nu) \subset \Dom(\abs{\Lambda_2}^\nu)$ for all
  $\nu \in [0,1]$. Moreover, if $\Dom(\Lambda_1) = \Dom(\Lambda_2)$, then also
  $\Dom(\abs{\Lambda_1}^\nu) = \Dom(\abs{\Lambda_2}^\nu)$ for all $\nu \in [0,1]$.
\end{corollary}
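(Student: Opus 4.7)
The plan is to derive the corollary directly from Proposition~\ref{propHeinz} applied with the strictly positive self-adjoint operators $\abs{\Lambda_1}+I_\cH$ and $\abs{\Lambda_2}+I_\cH$ on $\cH$ and with $S$ taken to be $I_\cH$. Note that Proposition~\ref{prop:heinz} in its stated form requires the clean inequality $\norm{\Lambda_2 Sx} \le C\norm{\Lambda_1 x}$, which in the setting of the corollary is \emph{not} generally available (no bound on $\norm{\Lambda_2 x}$ purely in terms of $\norm{\Lambda_1 x}$ need hold), so the shift by $I_\cH$ is the key device to absorb the lower-order term.

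First, assuming $\Dom(\Lambda_1) \subset \Dom(\Lambda_2)$, I would apply the closed graph theorem to the linear map $\Lambda_2 \colon (\Dom(\Lambda_1), \norm{\cdot}_{\Lambda_1}) \to \cK_2$, where $\norm{\cdot}_{\Lambda_1}$ is the graph norm of $\Lambda_1$. Since $\Lambda_1$ is closed, $(\Dom(\Lambda_1), \norm{\cdot}_{\Lambda_1})$ is a Banach space, and the closedness of $\Lambda_2$ as an operator on $\cH$ is readily seen to imply that this restricted map is closed as well. The closed graph theorem then yields a constant $C \ge 0$ with
\[
  \norm{\Lambda_2 x}_{\cK_2}
  \le
  C\bigl(\norm{x}_\cH + \norm{\Lambda_1 x}_{\cK_1}\bigr)
  \quad\text{for all }
  x \in \Dom(\Lambda_1)
  .
\]

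Next, using that $\norm{\Lambda_j y}_{\cK_j} = \norm{\abs{\Lambda_j} y}_\cH$ for $y \in \Dom(\Lambda_j) = \Dom(\abs{\Lambda_j})$, the above recasts as
\[
  \norm{(\abs{\Lambda_2}+I_\cH)x}_\cH
  \le
  (C+1) \norm{(\abs{\Lambda_1}+I_\cH)x}_\cH
\]
for all $x \in \Dom(\abs{\Lambda_1}+I_\cH) = \Dom(\Lambda_1)$. The operators $\abs{\Lambda_j}+I_\cH$ are strictly positive self-adjoint on $\cH$, so Proposition~\ref{propHeinz} applied with these operators and $S = I_\cH$ gives that $I_\cH$ maps $\Dom((\abs{\Lambda_1}+I_\cH)^\nu)$ into $\Dom((\abs{\Lambda_2}+I_\cH)^\nu)$ for every $\nu \in [0,1]$. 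By the spectral theorem, $\Dom((\abs{\Lambda_j}+I_\cH)^\nu) = \Dom(\abs{\Lambda_j}^\nu)$, yielding the desired inclusion $\Dom(\abs{\Lambda_1}^\nu) \subset \Dom(\abs{\Lambda_2}^\nu)$.

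For the equality statement, I would simply swap the roles of $\Lambda_1$ and $\Lambda_2$ and apply the inclusion just established in both directions. The only mildly delicate point is Step~1 — verifying that the closed graph theorem genuinely applies despite $\Lambda_2$ landing in a different Hilbert space $\cK_2$ — but this is routine once one equips $\Dom(\Lambda_1)$ with the graph norm of $\Lambda_1$ and uses the closedness of $\Lambda_2$ to identify limits.
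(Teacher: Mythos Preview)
Your argument is correct and essentially coincides with the paper's proof: both obtain relative boundedness of $\Lambda_2$ with respect to $\Lambda_1$ via the closed graph theorem, shift to the strictly positive operators $\abs{\Lambda_j}+I_\cH$, invoke the Heinz inequality, and identify $\Dom((\abs{\Lambda_j}+I_\cH)^\nu)=\Dom(\abs{\Lambda_j}^\nu)$ by functional calculus (the paper routes through Proposition~\ref{prop:heinz}, which is just this shift packaged up). One minor slip: the displayed constant $(C+1)$ is not quite right, since $\norm{(\abs{\Lambda_1}+I_\cH)x}_\cH$ is not equal to $\norm{x}_\cH+\norm{\abs{\Lambda_1}x}_\cH$ in general; however, from $\norm{(\abs{\Lambda_1}+I_\cH)x}_\cH\ge\max\{\norm{x}_\cH,\norm{\abs{\Lambda_1}x}_\cH\}$ one gets, e.g., the constant $2C+1$, which is all that is needed.
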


\begin{proof}
  Suppose that $\Dom(\Lambda_1) \subset \Dom(\Lambda_2)$. Since $\Dom(\abs{\Lambda_1}) = \Dom(\Lambda_1)$, we have as in the
  proof of the preceding proposition that $\Lambda_2(\abs{\Lambda_1}+I_\cH)^{-1}$ is a closed everywhere defined, hence bounded,
  operator from $\cH$ to $\cK_2$. Thus,
  \begin{equation*}
    \norm{\Lambda_2x}_{\cK_2}
    \le
    \norm{\Lambda_2(\abs{\Lambda_1}+I_\cH)^{-1}}_{\cH\to\cK_2} \cdot \norm{(\abs{\Lambda_1}+I_\cH)x}_{\cH}
  \end{equation*}
  for all $x\in\Dom(\Lambda_1)$, and applying Proposition~\ref{prop:heinz} with $S=I_\cH$ yields that
  \begin{equation*}
    \Dom(\abs{\Lambda_1}^\nu)
    =
    \Dom((\abs{\Lambda_1}+I_\cH)^\nu)\subset\Dom(\abs{\Lambda_2}^\nu)
    .
  \end{equation*}
  If also $\Dom(\Lambda_1) \supset \Dom(\Lambda_2)$, the above with switched roles of $\Lambda_1$ and $\Lambda_2$ yields that
  also $\Dom(\abs{\Lambda_2}^\nu) \subset \Dom(\abs{\Lambda_1}^\nu)$, which completes the proof.
\end{proof}%

\begin{remark}\label{rem:SchmDiss}
  In the particular case of $\nu = 1/2$, Corollary~\ref{cor:SchmDiss} can alternatively be proved with classical considerations
  regarding operator and form boundedness:

  If $\Dom(\Lambda_1) \subset \Dom(\Lambda_2)$, then also $\Dom(\abs{\Lambda_1}) \subset \Dom(\abs{\Lambda_2})$, so that
  $\abs{\Lambda_2}$ is relatively operator bounded with respect to $\abs{\Lambda_1}$, see, e.g.,~\cite[Remark~IV.1.5]{Kato95}. In
  turn, by~\cite[Theorem~VI.1.38]{Kato95}, $\abs{\Lambda_2}$ is also form bounded with respect to $\abs{\Lambda_1}$, which
  extends to the closure of the forms. The latter includes that
  $\Dom(\abs{\Lambda_1}^{1/2}) \subset \Dom(\abs{\Lambda_2}^{1/2})$.
\end{remark}

\begin{corollary}\label{cor:fracBij}
  Let $\Lambda_1$ and $\Lambda_2$ be as in Proposition~\ref{prop:heinz}, and suppose that $S\colon\cH_1\to\cH_2$ is bounded and
  bijective with $\Ran(S|_{\Dom(\Lambda_1)})=\Dom(\Lambda_2)$. Then, one has
  $\Ran(S|_{\Dom(\abs{\Lambda_1}^\nu)})=\Dom(\abs{\Lambda_2}^\nu)$ for all $\nu\in[0,1]$.
\end{corollary}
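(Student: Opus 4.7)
The plan is to apply Proposition~\ref{propHeinz} directly to the strictly positive self-adjoint operators $\abs{\Lambda_1}+I_{\cH_1}$ and $\abs{\Lambda_2}+I_{\cH_2}$, symmetrically for $S$ and for $S^{-1}$, and then to combine the two resulting inclusions.

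For the forward inclusion $S(\Dom(\abs{\Lambda_1}^\nu)) \subset \Dom(\abs{\Lambda_2}^\nu)$, I would argue as in the proof of Corollary~\ref{cor:SchmDiss}. By hypothesis $S$ maps $\Dom(\Lambda_1)=\Dom(\abs{\Lambda_1}+I_{\cH_1})$ into $\Dom(\Lambda_2)=\Dom(\abs{\Lambda_2}+I_{\cH_2})$, so the operator $(\abs{\Lambda_2}+I_{\cH_2})\,S\,(\abs{\Lambda_1}+I_{\cH_1})^{-1}$ is closed and everywhere defined on $\cH_1$, hence bounded by the closed graph theorem. This produces a constant $M\ge 0$ with
\begin{equation*}
  \norm{(\abs{\Lambda_2}+I_{\cH_2})Sx}_{\cH_2}
  \le
  M\cdot\norm{(\abs{\Lambda_1}+I_{\cH_1})x}_{\cH_1}
  \quad\text{ for all }\
  x\in\Dom(\Lambda_1).
\end{equation*}
An application of Proposition~\ref{propHeinz} then yields that $S$ maps $\Dom((\abs{\Lambda_1}+I_{\cH_1})^\nu)$ into $\Dom((\abs{\Lambda_2}+I_{\cH_2})^\nu)$ for every $\nu\in[0,1]$, and the functional calculus identity $\Dom((\abs{\Lambda_j}+I_{\cH_j})^\nu)=\Dom(\abs{\Lambda_j}^\nu)$ delivers the desired forward inclusion.

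For the reverse inclusion $\Dom(\abs{\Lambda_2}^\nu)\subset S(\Dom(\abs{\Lambda_1}^\nu))$, observe that $S\colon\cH_1\to\cH_2$ is a bounded bijection between Hilbert spaces, so $S^{-1}$ is bounded by the bounded inverse theorem. Moreover, the hypothesis $\Ran(S|_{\Dom(\Lambda_1)})=\Dom(\Lambda_2)$ together with the bijectivity of $S$ ensures that $S^{-1}$ maps $\Dom(\Lambda_2)$ onto $\Dom(\Lambda_1)$. Thus $S^{-1}$ satisfies the hypotheses of the forward step with the roles of $\Lambda_1$ and $\Lambda_2$ swapped, yielding $S^{-1}(\Dom(\abs{\Lambda_2}^\nu))\subset\Dom(\abs{\Lambda_1}^\nu)$, which is the same as $\Dom(\abs{\Lambda_2}^\nu)\subset S(\Dom(\abs{\Lambda_1}^\nu))$. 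Combining the two inclusions proves the claim.

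I do not expect a substantial obstacle: the argument is essentially a symmetrization of the reasoning already present in Corollary~\ref{cor:SchmDiss}, with $S$ playing the role previously occupied by the identity map. The only points needing attention are the closed-graph verification of the relative norm bound above and the routine observation that bijectivity of $S$ transports the remaining hypotheses to $S^{-1}$.
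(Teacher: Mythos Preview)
Your argument is correct. The forward inclusion follows exactly as you say: the composition $(\abs{\Lambda_2}+I_{\cH_2})\,S\,(\abs{\Lambda_1}+I_{\cH_1})^{-1}$ is closed (a closed operator composed on the right with a bounded one) and everywhere defined, so the closed graph theorem supplies the constant needed to invoke Proposition~\ref{propHeinz}. The reverse inclusion is then a clean symmetrization, and the bijectivity of $S$ together with $\Ran(S|_{\Dom(\Lambda_1)})=\Dom(\Lambda_2)$ indeed gives that $S^{-1}$ maps $\Dom(\Lambda_2)$ onto $\Dom(\Lambda_1)$, so the roles can be swapped.

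The paper proceeds a little differently. Rather than establishing the relative bound for $S$ directly via the closed graph theorem, it introduces the auxiliary closed operator $\Lambda_3 := \Lambda_1 S^{-1}$ with $\Dom(\Lambda_3)=\Dom(\Lambda_2)$, for which the identity $\Lambda_3 Sx=\Lambda_1 x$ makes the hypothesis of Proposition~\ref{prop:heinz} hold with constant $C=1$. This yields $S(\Dom(\abs{\Lambda_1}^\nu))\subset\Dom(\abs{\Lambda_3}^\nu)$, and then Corollary~\ref{cor:SchmDiss} is invoked to identify $\Dom(\abs{\Lambda_3}^\nu)=\Dom(\abs{\Lambda_2}^\nu)$. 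Your route is more direct: by carrying the operator $S$ through the closed-graph argument of Corollary~\ref{cor:SchmDiss} itself, you avoid both the auxiliary operator $\Lambda_3$ and the separate appeal to that corollary. The paper's detour, on the other hand, has the minor conceptual advantage of isolating the domain equality $\Dom(\Lambda_3)=\Dom(\Lambda_2)$ as the place where Corollary~\ref{cor:SchmDiss} enters, making the dependence on earlier results more transparent.
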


\begin{proof}
  Consider the closed densely defined operator $\Lambda_3 := \Lambda_1S^{-1}$ with domain
  $\Dom(\Lambda_3) = \Ran(S|_{\Dom(\Lambda_1)}) = \Dom(\Lambda_2)$. By definition, $S$ maps $\Dom(\Lambda_1)$ onto
  $\Dom(\Lambda_3)$. Moreover, we have $\Lambda_3 Sx = \Lambda_1 x$ and, in particular,
  \begin{equation*}
    \norm{ \Lambda_3 Sx }_{\cK_1} = \norm{\Lambda_1x}_{\cK_1}
  \end{equation*}
  for all $x\in\Dom(\Lambda_1)$. Proposition~\ref{prop:heinz} now implies that $S$ maps $\Dom(\abs{\Lambda_1}^\nu)$ into
  $\Dom(\abs{\Lambda_3}^\nu)$. Since $\Dom(\abs{\Lambda_3}^\nu)=\Dom(\abs{\Lambda_2}^\nu)$ for all $\nu \in [0,1]$ in light of
  Corollary~\ref{cor:SchmDiss}, this proves the inclusion $\Ran(S|_{\Dom(\abs{\Lambda_1}^\nu)})\subset\Dom(\abs{\Lambda_2}^\nu)$.

  Since $S$ is bijective and $S^{-1}$ maps $\Dom(\Lambda_2)$ onto $\Dom(\Lambda_1)$ by hypothesis, one verifies in an analogous
  way that $S^{-1}$ maps $\Dom(\abs{\Lambda_2}^\nu)$ into $\Dom(\abs{\Lambda_1}^\nu)$. This shows the converse inclusion and,
  hence, completes the proof.
\end{proof}%

The last corollary discussed here is related to the question whether an operator sum represents the sum of the corresponding
forms. Part~(b) of this corollary can in some sense also be regarded as an extension of~\cite[Lemma~2.2.7]{SchmDiss} to not
necessarily off-diagonal perturbations.

\begin{corollary}[{cf.~\cite[Lemma~2.2.7]{SchmDiss}}]\label{cor:formpert}
  Let $\Lambda$ be a self-adjoint operator on a Hilbert space with inner product $\scprod{\cdot , \cdot}$, and let $K$ be an
  operator on the same Hilbert space.

  \begin{enumerate}
    \renewcommand{\theenumi}{\alph{enumi}}

    \item
    If $K$ is symmetric with $\Dom(K) \supset \Dom(\abs{\Lambda}^{1/2})$, then the operator sum $\Lambda + K$ defines a
    self-adjoint operator with
    \begin{equation*}
      \scprod{ \abs{ \Lambda + K }^{1/2}x , \sign( \Lambda + K )\abs{ \Lambda + K}^{1/2}y }
      =
      \scprod{ \abs{\Lambda}^{1/2}x , \sign(\Lambda)\abs{\Lambda}^{1/2}y } + \scprod{ x, Ky }
    \end{equation*}
    for all $x,y \in \Dom(\abs{\Lambda}^{1/2}) = \Dom(\abs{\Lambda+K}^{1/2})$.

    \item
    If $K$ is self-adjoint with $\Dom(K) \supset \Dom(\Lambda)$ such that $\Lambda + K$ is self-adjoint on
    $\Dom(\Lambda+K) = \Dom(\Lambda)$, then
    \begin{multline*}
      \scprod{ \abs{ \Lambda + K }^{1/2}x , \sign( \Lambda + K )\abs{ \Lambda + K }^{1/2}y }\\
      =
      \scprod{ \abs{\Lambda}^{1/2}x , \sign(\Lambda)\abs{\Lambda}^{1/2}y } +
        \scprod{ \abs{K}^{1/2}x, \sign(K)\abs{K}^{1/2}y }
    \end{multline*}
    for all $x,y \in \Dom(\abs{\Lambda}^{1/2}) = \Dom(\abs{\Lambda+K}^{1/2})$.

  \end{enumerate}
\end{corollary}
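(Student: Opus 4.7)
The plan is to treat both parts in parallel: first establish self-adjointness of $\Lambda+K$ on $\Dom(\Lambda)$, then infer the form-domain equality $\Dom(\abs{\Lambda}^{1/2})=\Dom(\abs{\Lambda+K}^{1/2})$ directly from Corollary~\ref{cor:SchmDiss} applied with $\Lambda_{1}=\Lambda$, $\Lambda_{2}=\Lambda+K$, and $\nu=1/2$, and finally verify the claimed form identity first on the operator domain $\Dom(\Lambda)$ and then extend it by continuity to the entire form domain.

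For part~(a), I will first argue that without loss of generality $K$ may be replaced by its closure (symmetric operators are closable, and passage to the closure does not alter $\Lambda+K$ on $\Dom(\Lambda)$, since the closure agrees with $K$ on $\Dom(K)$). The operator $K(\abs{\Lambda}^{1/2}+I)^{-1}$ is then closed and everywhere defined, hence bounded by the closed graph theorem, which yields
\begin{equation*}
  \norm{Kx}
  \le
  C\bigl(\norm{\abs{\Lambda}^{1/2}x}+\norm{x}\bigr)
  \quad\text{for all }x\in\Dom(\abs{\Lambda}^{1/2}).
\end{equation*}
Combining this with the elementary interpolation $\norm{\abs{\Lambda}^{1/2}x}\le\eps\norm{\Lambda x}+(4\eps)^{-1}\norm{x}$ on $\Dom(\Lambda)$ (itself a direct consequence of $\norm{\abs{\Lambda}^{1/2}x}^{2}=\scprod{x,\abs{\Lambda}x}\le\norm{x}\norm{\Lambda x}$ together with Young's inequality) shows that $K$ is infinitesimally $\Lambda$-bounded, so that Kato--Rellich delivers self-adjointness of $\Lambda+K$ on $\Dom(\Lambda)$.

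For part~(b), self-adjointness of $\Lambda+K$ is immediate from Kato--Rellich. The additional input needed here is that the $\Lambda$-boundedness of $K$ gives $\Dom(K)\supset\Dom(\Lambda)$, whence Corollary~\ref{cor:SchmDiss} with $\Lambda_{1}=\Lambda$, $\Lambda_{2}=K$ further yields $\Dom(\abs{\Lambda}^{1/2})\subset\Dom(\abs{K}^{1/2})$, making the right-hand side of the claimed form identity well defined on the common form domain.

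The form identity on the operator domain is in both parts immediate from $\scprod{(\Lambda+K)x,y}=\scprod{\Lambda x,y}+\scprod{x,Ky}$ for $x,y\in\Dom(\Lambda)$, combined in part~(b) with the fact that $\sign(K)$ commutes with $\abs{K}^{1/2}$ so that $\scprod{x,Ky}=\scprod{\abs{K}^{1/2}x,\sign(K)\abs{K}^{1/2}y}$. To extend this identity to the entire form domain, I will use that $\Dom(\Lambda)$ is a core for the forms associated with both $\Lambda$ and $\Lambda+K$, and that all three bilinear expressions appearing in the identity are continuous with respect to the graph norm of $\abs{\Lambda}^{1/2}$ on $\Dom(\abs{\Lambda}^{1/2})$; in part~(a) this continuity is a direct consequence of the bound on $K$ derived above, and in part~(b) it follows analogously from boundedness of $\abs{K}^{1/2}(\abs{\Lambda}^{1/2}+I)^{-1}$ (another instance of closed graph, using $\Dom(\abs{\Lambda}^{1/2})\subset\Dom(\abs{K}^{1/2})$). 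The main delicate point is the passage to the closure of $K$ in part~(a), where one must verify that the resulting bound indeed controls the original $K$ on $\Dom(\abs{\Lambda}^{1/2})$; everything else reduces to standard arguments combining Kato--Rellich with the Heinz-inequality consequences of Corollary~\ref{cor:SchmDiss}.
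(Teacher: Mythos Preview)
Your proposal is correct and follows essentially the same approach as the paper: establish self-adjointness via Kato--Rellich, deduce the form-domain equality from Corollary~\ref{cor:SchmDiss}, verify the form identity on $\Dom(\Lambda)$, and extend by approximation using that $\Dom(\Lambda)$ is a core for $\abs{\Lambda}^{1/2}$ together with the relevant relative boundedness. The only notable difference is that in part~(a) the paper obtains the infinitesimal $\Lambda$-boundedness of $K$ by citing \cite[Corollary~2.1.20]{Tre08}, whereas you derive it by hand via passage to the closure, the closed graph theorem, and the elementary interpolation estimate; your route is slightly more self-contained, and the ``delicate point'' you flag is in fact harmless since the closure agrees with $K$ on $\Dom(K)\supset\Dom(\abs{\Lambda}^{1/2})$.
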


\begin{proof}
  (a).
  For $x,y \in \Dom(\Lambda) = \Dom(\Lambda+K)$, both sides of the claimed identity clearly agree. For general
  $x,y \in \Dom(\abs{\Lambda}^{1/2}) = \Dom(\abs{\Lambda+K}^{1/2})$, we use that $\Dom(\Lambda)$ is an operator core for
  $\abs{\Lambda}^{1/2}$ and approximate both $x$ and $y$ within $\Dom(\Lambda)$ with respect to the graph norm of
  $\abs{\Lambda}^{1/2}$. In order to conclude the claim, it only remains to show that $x$ and $y$ are then also approximated with
  respect to the graph norms of $K$ and $\abs{\Lambda+K}^{1/2}$, respectively. For this it suffices to establish that $K$ and
  $\abs{\Lambda+K}^{1/2}$ are operator bounded with respect to $\abs{\Lambda}^{1/2}$.

  That $K$ is operator bounded with respect to $\abs{\Lambda}^{1/2}$ follows immediately from the inclusion
  $\Dom(\abs{\Lambda}^{1/2}) \subset \Dom(K)$ and the fact that $K$ is closable, see,
  e.g.,~\cite[Remark~IV.1.5 and Section~V.3.3]{Kato95}. Moreover, the same properties yield by Corollary~2.1.20 in~\cite{Tre08}
  that $K$ is operator infinitesimal with respect to $\Lambda$. In particular, the operator sum $\Lambda+K$ is self-adjoint on
  $\Dom(\Lambda+K) = \Dom(\Lambda)$ by the well-known Kato-Rellich theorem. In turn, Corollary~\ref{cor:SchmDiss} implies that
  $\Dom(\abs{\Lambda+K}^{1/2}) = \Dom(\abs{\Lambda}^{1/2})$, so that also $\abs{\Lambda+K}^{1/2}$ is operator bounded with
  respect to $\abs{\Lambda}^{1/2}$. This completes the proof of (a).

  (b).
  Corollary~\ref{cor:SchmDiss} implies that $\Dom(\abs{\Lambda+K}^{1/2}) = \Dom(\abs{\Lambda}^{1/2})$ and
  $\Dom(\abs{\Lambda}^{1/2}) \subset \Dom(\abs{K}^{1/2})$. In turn, as in part~(a), both $\abs{\Lambda+K}^{1/2}$ and
  $\abs{K}^{1/2}$ are relatively bounded with respect to $\abs{\Lambda}^{1/2}$. The claimed identity now follows just as in
  part~(a) by approximation upon observing that it certainly holds for $x,y \in \Dom(\Lambda)$.
\end{proof}%

\section*{Acknowledgements}
The author is grateful to Ivan Veseli\'c, Matthias T\"aufer, Stephan Schmitz, and Ivica Naki\'c for fruitful and inspiring
discussions. He is especially indebted to Stephan Schmitz for also commenting on an earlier version of this manuscript. Finally,
he thanks the anonymous referees of an earlier version of the manuscript for comments that helped to improve the presentation of
the material.


\end{document}